\let\@wraptoccontribs\wraptoccontribs
\newcommand{\ssigma}{{\boldsymbol{\Sigma}}}
\newcommand{\pr}{\operatorname{Pr}}
\newcommand{\rc}{{\operatorname{rc}}}
\newcommand{\stratum}[2][{g}]{\beta_{#1}({#2})}
\newcommand{\Spec}{\operatorname{Spec}}
\newcommand{\Stab}{\operatorname{Stab}}
\newcommand{\Proj}{\operatorname{Proj}}
\newcommand{\AQ}{{\mathbb{A}}}
\newcommand{\CC}{{\mathbb{C}}}
\newcommand{\HH}{{\mathbb{H}}}
\newcommand{\EE}{{\mathbb{E}}}
\newcommand{\QQ}{{\mathbb{Q}}}
\newcommand{\RR}{{\mathbb{R}}}
\newcommand{\ZZ}{{\mathbb{Z}}}
\newcommand{\VV}{{\mathbb{V}}}
\newcommand{\bS}{{\mathbb{S}}}
\newcommand{\calH}{{\mathcal H}}
\newcommand{\calI}{{\mathcal I}}
\newcommand{\calA}{{\mathcal A}}
\newcommand{\calC}{{\mathcal C}}
\newcommand{\calL}{{\mathcal L}}
\newcommand{\calM}{{\mathcal M}}
\newcommand{\calX}{{\mathcal X}}
\newcommand{\calS}{{\mathcal S}}
\newcommand{\op}{\operatorname}
\newcommand{\M}[1]{\calM_{#1}}
\newcommand{\Mb}[1]{\overline{\calM}_{#1}}
\newcommand{\ab}[1][g]{\calA_{#1}}
\newcommand{\BS}[1][g]{{\calA_{#1}^{\op {BS}}}}
\newcommand{\Sat}[1][g]{{\calA_{#1}^{\op {Sat}}}}
\newcommand{\Vor}[1][g]{{\calA_{#1}^{\op {Vor}}}}
\newcommand{\Perf}[1][g]{{\calA_{#1}^{\op {Perf}}}}
\newcommand{\Centr}[1][g]{{\calA_{#1}^{\op {Centr}}}}
\newcommand{\Matr}[1][g]{{\calA_{#1}^{\op {Matr}}}}
\newcommand{\Tor}[1][g]{{\calA_{#1}^{\op {tor}}}}
\newcommand{\STor}[1][g]{{\tilde{\calA}_{#1}^{\op {tor}}}}
\newcommand{\Asigma}[1][g]{{{\calA}_{#1}^\ssigma}}
\newcommand{\XBB}{X^{\op{BB}}}
\newcommand{\XBS}{X^{\op{BS}}}
\newcommand{\XRBS}{X^{\op{RBS}}}
\newcommand{\XTor}{X^{\op{tor}}}
\newcommand{\Sp}{\op{Sp}}
\newcommand{\GSp}{\op{GSp}}
\newcommand{\GSpin}{\op{GSpin}}
\newcommand{\spin}{\op{spin}}
\newcommand{\GL}{\op{GL}}
\newcommand{\SL}{\op{SL}}
\newcommand{\Spin}{\op{Spin}}
\newcommand{\SO}{\op{SO}}
\newcommand{\Or}{\op{O}}
\newcommand{\Mat}{\op{Mat}}
\newcommand{\U}{\op{U}}
\newcommand{\Sym}{\op{Sym}}
\newcommand{\Hom}{\op{Hom}}
\newcommand{\Lie}{\op{Lie}}
\newcommand{\Aut}{\op{Aut}}
\newcommand{\CH}{\op{CH}}
\newcommand{\ch}{\op{ch}}
\newcommand\cusp{\op{cusp}}
\newcommand\diag{\operatorname{diag}}
\newcommand\tr{{\rm tr}}
\newcommand\codim{{\rm codim}}
\newcommand\rank{\operatorname{rank}}
\newcommand{\pu}{\bullet}
\newcommand{\cohloc}[3]{H^{#1}(#2,#3)}
\newcommand{\coh}[2][\pu]{\cohloc {#1}{#2}{\QQ}}
\newcommand{\icohloc}[3]{IH^{#1}(#2,#3)}
\newcommand{\icoh}[2][\pu]{\icohloc {#1}{#2}{\QQ}}
\newcommand{\cohvi}[3][\pu]{\cohloc {#1}{#2}{\VV_{#3}}}
\newcommand{\cohcloc}[3]{H_c^{#1}(#2,#3)}
\newcommand{\cohinloc}[3]{H_!^{#1}(#2,#3)}
\newcommand{\cohtwoloc}[3]{H_{(2)}^{#1}(#2,#3)}
\newcommand{\cohcusploc}[3]{H_{\cusp}^{#1}(#2,#3)}
\newcommand{\cohc}[2][\pu]{\cohcloc {#1}{#2}{\QQ}}
\newcommand{\cohcvi}[3][\pu]{\cohcloc {#1}{#2}{\VV_{#3}}}
\newcommand{\cohinvi}[3][\pu]{\cohinloc {#1}{#2}{\VV_{#3}}}
\newcommand{\cohtwovi}[3][\pu]{\cohtwoloc {#1}{#2}{\VV_{#3}\otimes \CC}}
\newcommand{\cohcuspvi}[3][\pu]{\cohcusploc {#1}{#2}{\VV_{#3}\otimes \CC}}
\newcommand{\BMloc}[3]{\bar H_{#1}(#2,#3)}
\newcommand{\BM}[2][\pu]{\BMloc {#1}{#2}{\QQ}}
\theoremstyle{plain}
\newtheorem{thm}{Theorem} %
\newtheorem{prop}{Proposition}
\newtheorem{cor}{Corollary}
\newtheorem{qu}{Question}
\theoremstyle{definition}
\newtheorem{df}{Definition}
\newtheorem{rem}{Remark}
\newtheorem{exa}{Example}
\definecolor{forestgreen}{rgb}{0.13, 0.55, 0.13}
\title[Topology of $\ab$]{The topology of $\ab$ and its compactifications}
\author{Klaus Hulek}
\address{Institut f\"ur Algebraische Geometrie, Leibniz Universit\"at Hannover, Welfengarten 1, 30060 Hannover, Germany}
\email{hulek@math.uni-hannover.de}
\author{Orsola Tommasi}
\address{Mathematical Sciences, Chalmers University of Technology and the University of Gothenburg, SE-412 96 G\"oteborg, Sweden}
\email{orsola@chalmers.se}
\address{Unit\'e de math\'ematiques pures et appliqu\'ee, ENS de Lyon, 69364 Lyon Cedex 07, France}
\email{olivier.taibi@ens-lyon.fr}
\begin{document}
\begin{abstract}
We survey old and new results about the cohomology of the moduli space  $\ab$ of principally polarized abelian varieties of genus $g$ and its compactifications. The main emphasis lies on the 
computation of the cohomology for small genus and on stabilization results. We review both geometric and representation theoretic approaches to the problem. The appendix provides a detailed discussion of 
computational methods based on trace formulae and automorphic representations,
in particular Arthur's endoscopic classification of automorphic representations
for symplectic groups.
\end{abstract}

\maketitle

\section{Introduction}

The study of moduli spaces of abelian varieties goes back as far as the late 19th century when Klein and Fricke studied families of elliptic curves.
This continued in the 20th century with the work of Hecke. The theory of higher dimensional abelian varieties was greatly influenced by C.L. Siegel  
 who studied automorphic forms in several variables. In the 1980's Borel and others started a systematic study of the topology of locally symmetric spaces 
 and thus also moduli spaces of abelian varieties. From 1977 onwards  Freitag, Mumford and Tai proved groundbreaking results on the geometry of Siegel modular varieties.  
 Since then a vast body of literature has appeared on abelian varieties and their moduli.

One of the fascinating aspects of abelian varieties is that the subject is at the crossroads of several mathematical fields: geometry, arithmetic, topology and representation theory.
In this survey we will restrict ourselves to essentially one aspect, namely the topology of the moduli space $\ab$ of principally polarized abelian varieties and   its compactifications. 
This in itself is a subject which has been covered in numerous research papers and several survey articles. Of the latter we would like to mention 
articles by van der Geer and Oort \cite{vdgoosurvey},  Sankaran and the first author \cite{husageometry},  van der Geer \cite{vdgsurvey}, Grushevsky \cite{grAgsurvey} and van der Geer's contribution
to the Handbook of Moduli \cite{vdghomsurvey}. 
Needless to say that all of these articles concentrate on different aspects and include new results as progress was made. In this article we will, naturally, recall some of the 
basic ideas of the subject, but we will in particular concentrate on two aspects. One is the actual {\em computation} of the cohomology of $\ab$ and its compactifications in small genus. 
The other aspect is the phenomenon of {\em stabilization} of cohomology, which means that in certain ranges the cohomology groups  do not depend on the genus.  
One of our aims is to show how concepts and techniques from such different fields as algebraic geometry, analysis, differential geometry, representation theory and topology   
come together fruitfully in this field to provide powerful tools and results.  

In more detail, we will cover the following topics: In Section \ref{sec:analyticapproach} we will set the scene and introduce the moduli space of principally polarized abelian varieties $\ab$
as an analytic space.  In Section \ref{sec:tautologicalring} we introduce the tautological ring of $\ab$. Various compactifications of $\ab$ will be introduced and discussed
in Section \ref{sec:proportionality}, where we will also recall the proportionality principle.   In Section \ref{sec:L2cohomology} we shall recall work on $L^2$-cohomology,  Zucker's conjecture
and some results from representation theory.
This will mostly be a recapitulation of more classical results, but the concepts and the techniques from this section will play a major role in the final two sections of this survey.
In Section \ref{sec:smallgenus} we will treat the computation of the cohomology   in low genus in some detail. In particular, we will discuss the cohomology of both $\ab$ itself, but also of 
its various compactifications, and we will treat both singular and intersection cohomology.  Finally, stabilization is the main topic of  Section \ref{sec:stabilization}. Here we not only treat the 
classical results, such as Borel's stabilization theorem for $\ab$ and its extension by Charney and Lee to the Satake compactification $\Sat$, but we will also discuss recent work of 
Looijenga and Chen as well as stabilization of the cohomology for (partial) toroidal compactifications.

In the appendix, by Olivier Ta\"ibi, we explain how the Arthur--Selberg trace formula can be harnessed
to explicitly compute the Euler characteristic of certain local systems on $\ab$
and their intermediate extensions to $\Sat$, i.e.\ $L^2$-cohomology by Zucker's
conjecture. Using Arthur's endoscopic classification and an inductive procedure
individual $L^2$-cohomology groups can be deduced. An alternative computation
uses Chenevier and Lannes' classification of automorphic cuspidal
representations for general linear groups having conductor one and which are
algebraic of small weight. Following Langlands and Arthur, we give details for
the computation of $L^2$-cohomology in terms of Arthur--Langlands parameters,
notably involving branching rules for (half-)spin representations.

Throughout this survey we will work over the complex numbers $\CC$. We will also restrict to moduli of principally polarized abelian varieties, although the same questions can be asked more generally for 
abelian varieties with other polarizations, as well as for abelian varieties with extra structure such as complex or real multiplication or level structures. This restriction is mostly due to lack of space, but also to the fact that,
in particular, moduli spaces with non-principal polarizations have received considerably less attention. 

\medskip
    {\small {\bf Acknowledgements.} We are grateful to Dan Petersen for very useful comments on an earlier draft of this paper.
      The second author would like to acknowledge support from her \emph{Research Award 2016} of the Faculty of Science of the University of Gothenburg during the preparation of this paper.
      The first author is grateful to the organizers of the {\em Abel symposium 2017} for a wonderful conference. 
      }

\section{The complex analytic approach}\label{sec:analyticapproach}

As we said above the construction of the moduli space $\ab$ of principally  polarized abelian varieties (ppav) of dimension $g$ can be approached from 
different angles:  it can be constructed algebraically as the underlying coarse moduli space of the {\em moduli stack} 
of principally polarized abelian varieties \cite{fachbook} or analytically as a locally symmetric domain \cite{bila}. The algebraic approach results in a smooth
Deligne--Mumford stack defined over $\Spec(\ZZ)$ of dimension $g(g+1)/2$, the analytic construction gives a normal complex analytic space with finite quotient singularities.
The latter is, by the work of Satake \cite{satake} and Baily--Borel \cite{babo} in fact a quasi-projective variety.   

Here we recall the main facts about the analytic approach. The {\em Siegel upper half plane} is defined as the space of symmetric $g \times g$ matrices with positive definite
imaginary part
\begin{equation}
\HH_g= \{ \tau \in \Mat(g \times g, \CC ) \mid \tau = {}^t \tau, \Im(\tau) > 0 \}. 
\end{equation}
This is a homogeneous domain. To explain  this we consider the standard symplectic form
\begin{equation}
J_g=\left(
\begin{array}{cc}
0          &  {\bf 1}_g\\
-{\bf 1}_g &   0
\end{array}
\right).
\end{equation}
The {\em real symplectic group }Ê $\Sp(2g,\RR)$ is the group fixing this form: 
\begin{equation}
\Sp(2g,\RR)=\{M \in \GL(2g,\RR) \mid {}^tMJM=J \}.
\end{equation}
Similarly we define $\Sp(2g,\QQ)$ and $\Sp(2g,\CC)$.

The discrete subgroup
\begin{equation*}
\Gamma_g=\Sp(2g,\ZZ)
\end{equation*} 
will be of special importance for us.
The group of {\em (complex) symplectic similitudes} is defined by 
\begin{equation}
\GSp(2g,\CC)=\{M \in \GL(2g,\CC) \mid {}^tMJM=cJ {\mbox { for some }} c\in \CC^* \}.
\end{equation}

The real symplectic group $\Sp(2g,\RR)$ acts on the Siegel space $\HH_g$ from the left  by 
\begin{equation}
M= \left(
\begin{array}{cc}
A          &  B\\
C &   D 
\end{array}
\right) 
: \tau \mapsto (A\tau +B)(C\tau + D)^{-1}. 
\end{equation}
Here $A,B,C,D$ are $g \times g$  matrices. This action is transitive and the stabilizer of the point $i{\bf 1}_g$ is 
\begin{equation}
\Stab(i{\bf 1}_g)=\left\{M \in \Sp(2g,\RR) \mid 
 M= \left(
\begin{array}{cc}
A  &  B\\
-B &   A
\end{array}\right)
\right\}. 
\end{equation}
The map
\begin{equation*}
  \left(
\begin{array}{cc}
A   &  B\\
-B &   A
\end{array}
\right) \mapsto A+iB
\end{equation*}
defines an isomorphism
\begin{equation*}
\Stab(i{\bf 1}_g) \cong \U(g)
\end{equation*}
where $\U(g)$  is the unitary group. This is the maximal compact subgroup of $\Sp(2g,\RR)$,
and in this way we obtain a description of the Siegel space as a homogeneous domain
\begin{equation}
\HH_g \cong \Sp(2g,\RR) / \U(g).
\end{equation}

The involution 
\begin{equation*}
\tau \mapsto - \tau^{-1}
\end{equation*}
defines an involution with $i{\bf 1}_g$ an isolated fixed point. Hence $\HH_g$ is a symmetric homogeneous domain.

The object which we are primarily interested in is the quotient 
\begin{equation}
\ab = \Gamma_g \backslash \HH_g.
\end{equation}
The discrete group $\Gamma_g=\Sp(2g,\ZZ)$ acts properly discontinuously on $\HH_g$ and hence $\ab$ is a normal analytic space with finite 
quotient singularities. This is a coarse moduli space for principally polarized abelian varieties (ppav), see \cite[Chapter 8]{bila}.  Indeed, given a 
point $[\tau]\in \ab$ one obtains a ppav explicitly as $A_{[\tau]}=\CC^g/L_{\tau}$, where $L_{\tau}$ is the lattice in $\CC^g$ spanned by the columns 
of the $(g \times 2g)$-matrix $(\tau,{\bf 1}_g)$.    

There are various variations of this construction.
One is that one may want to describe (coarse) moduli spaces of abelian varieties with polarizations which are non-principal. This is achieved by 
 replacing the standard symplectic form given by $J_g$ by
 \begin{equation}
J_g=\left(
\begin{array}{cc}
0          &  D\\
- D &   0
\end{array}
\right)
\end{equation}
where $D=\diag(d_1, \ldots, d_g)$ is a diagonal matrix  and the entries $d_i$ are positive integers with $d_1 | d_2 | \cdots | d_g$. 

Another variation involves the introduction of {\em level structures}. 
This results in choosing suitable finite index subgroups of 
$\Gamma_g$. Here we will only consider the {\em principal congruence subgroups of level $\ell$}, which are defined
by 
\begin{equation*}
\Gamma_g(\ell)=\{ g\in \Sp(2g,\ZZ) \mid g \equiv {\bf 1}\mod \ell  \}. 
\end{equation*} 
The quotient
\begin{equation}
\ab(\ell) = \Gamma_g(\ell) \backslash \HH_g
\end{equation}
parameterizes ppav with a level-$\ell$ structure. The latter is the choice of a symplectic basis of the group $A[\ell]$ of $\ell$-torsion points on an abelian variety $A$. Recall that 
$A[\ell] \cong (\ZZ/ \ell \ZZ)^{2g}$ and that $A[\ell]$ is equipped with a natural symplectic form, the {\em Weil pairing}, see \cite[Section IV.20]{mumfordbook}. If $\ell \geq 3$ then the group $\Gamma_g(\ell)$ acts 
freely on $\HH_g$, see e.g. \cite{serre-rigidity}, \cite[Corollary 5.1.10]{bila},
and hence $\ab(\ell)$ is a complex manifold (smooth quasi-projective variety).
In particular, analogously to the case of $\HH_g$ discussed in \cite[\S8.7]{bila}, for $\ell\geq 3$ the manifold $\ab(\ell)$ 
carries an honest universal family $\calX_g(\ell) \to \ab(\ell)$, which can be defined as the quotient
\begin{equation}\label{equ:universal}
{\mathcal X}_g(\ell)=\Gamma_g(\ell)\ltimes \ZZ^{2g} \backslash \HH_g\times \CC^g.
\end{equation}
Here the semidirect product $\Gamma_g(\ell)\ltimes \ZZ^{2g}$ is defined by the action of $\Sp(2g,\ZZ)$ on $\ZZ^{2g}$
and the action is given by
\begin{equation}
(M,m,n): (\tau, z) \mapsto (M(\tau),  ((C\tau+D)^{t})^{-1}z + \tau m + n ).
\end{equation}
for all $M\in\Gamma_g(\ell)$ and  $m,n\in\ZZ^g$,
The map  $\calX_g(\ell) \to \ab(\ell)$ is induced by the projection $\HH_g\times \CC^g \to \HH_g$.
The universal family $\calX_g(\ell)$ makes sense also for $\ell=1,2$ if we define it as an orbifold quotient. This allows to define a universal family $\calX_g:=\calX_g(1) \rightarrow \ab$ on $\ab$.

A central object in this theory is the  {\em Hodge bundle} $\EE$. Geometrically, this is given by associating to each point $[\tau]\in \ab$ the cotangent space of the abelian variety $A_{[\tau]}$ at the origin.
This gives an honest vector bundle over the level covers $\ab(\ell)$ and an orbifold vector bundle over $\ab$. In terms of automorphy factors this can be written
as
\begin{equation}
\EE := \Sp(2g,\ZZ) \backslash  \HH_g \times \CC^g
\end{equation}
given by 
\begin{equation}
M: (\tau,v) \mapsto (M(\tau), (C\tau +D)v)
\end{equation}
for $M \in \Sp(2g,\ZZ)$.
 
As we explained above, the Siegel space $\HH_g$ is a symmetric homogeneous domain and as such has a {\em compact dual}, namely the {\em symplectic Grassmannian}
\begin{equation}   
Y_g=  \{ L \subset \CC^{2g} \mid \dim L=g, J_g|_L\equiv 0 \}.
\end{equation}
This is a homogeneous projective space of complex dimension $g(g+1)/2$. In terms of algebraic groups it can be identified with
\begin{equation}
Y_g= \GSp(2g,\CC)/Q
\end{equation}
where 
\begin{equation}
Q=\left\{ \left(\begin{array}{cc}
A   &  B\\
C &   D 
\end{array}
\right)
 \in \GSp(2g,\CC) \mid  C=0 \right\}
\end{equation}
is a {\em Siegel parabolic subgroup}.

The Siegel space $\HH_g$ is the open subset of $Y_g$ of all maximal isotropic subspaces $L \in Y_g$ such that the restriction of the symplectic form $-i J_g|_L$ is positive definite.  
Concretely, one can associate to $\tau\in \HH_g$ the subspace spanned by the rows of the matrix $(- {\bf 1}_g, \tau)$.  
The cohomology ring $H^{\bullet}(Y_g,\ZZ)$ is very well understood in terms of Schubert cycles. 
Moreover $Y_g$ is a smooth rational variety and the cycle map defines  an isomorphism $\CH^{\bullet}(Y_g) \cong  H^{\bullet}(Y_g,\ZZ)$ between the {\em Chow ring} and 
the cohomology ring of $Y_g$. 
For details we refer the reader to van der Geer's  survey paper \cite[p. 492]{vdghomsurvey}. 

Since $Y_g$ is a Grassmannian we have a tautological sequence of  vector bundles 
\begin{equation}\label{lem:basicexact} 
0 \to E \to H \to Q \to 0
\end{equation}
where $E$ is the tautological subbundle, $H$ is the trivial bundle of rank $2g$ and $Q$ is the tautological quotient bundle. In particular, the fibre $E_L$ at a point $[L] \in Y_g$ is the isotropic
subspace $L\subset \CC^{2g}$.    
We denote the Chern classes of $E$ by
\begin{equation*}
u_i := c_i(E),
\end{equation*}
which we can think of as elements in Chow or in the cohomology ring. The exact sequence (\ref{lem:basicexact}) immediately gives the relation
\begin{equation}\label{equ:basicrel}
(1 + u_1 + u_2 + \ldots + u_g)(1 - u_1 + u_2 - \ldots + (-1)^gu_g)=1.
\end{equation}
Note that this can also be expressed in the form
 \begin{equation}\label{equ:Chernchar}
\ch_{2k}(E)=0,  \,  k\geq 1
\end{equation}
where $\ch_{2k}(E)$ denotes the degree $2k$ part of the Chern character.

\begin{thm}\label{thm:tautringcompactdual}
The classes $u_i$ with $i= 1, \dots, g$ generate $\CH^{\bullet}(Y_g) \cong  H^{\bullet}(Y_g,\ZZ)$ and all relations are generated by 
the relation
\begin{equation*}
(1 + u_1 + u_2 + \ldots + u_g)(1 - u_1 + u_2 - \ldots + (-1)^gu_g)=1.
\end{equation*}
\end{thm}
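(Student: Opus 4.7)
The plan is to view $Y_g$ as a smooth subvariety of the ordinary Grassmannian $G(g,2g)$ cut out by the Lagrangian condition, and to transfer the classical presentation of the Grassmannian's cohomology across this embedding, using the symplectic form to identify the tautological quotient bundle with $E^{*}$. First I would note that the short exact sequence \eqref{lem:basicexact} is simply the restriction of the tautological sequence on $G(g,2g)$. Because each $L\in Y_g$ is maximal isotropic, so $L=L^{\perp}$, the form $J_g$ descends to a perfect pairing $L\otimes (\CC^{2g}/L)\to\CC$, giving a canonical identification $Q\cong E^{*}$. Taking total Chern classes then yields $c(Q)=c(E^{*})=1-u_1+u_2-\cdots+(-1)^{g}u_g$, while Whitney additivity applied to \eqref{lem:basicexact} gives $c(E)\,c(Q)=c(H)=1$; combining the two produces the relation \eqref{equ:basicrel}.

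To show that the $u_i$ generate $H^{\bullet}(Y_g;\QQ)$ and that \eqref{equ:basicrel} exhausts the relations, I would invoke Borel's description of the cohomology of a generalised flag variety. Writing $Y_g=\Sp(2g,\CC)/Q$ with $Q$ the Siegel parabolic, one has
\[
H^{\bullet}(Y_g;\QQ)\;\cong\;\QQ[x_1,\dots,x_g]^{S_g}\big/\bigl(\QQ[x_1,\dots,x_g]^{W}_{+}\bigr),
\]
where $W=S_g\ltimes (\ZZ/2)^g$ is the Weyl group of $\Sp(2g,\CC)$ and $S_g$ is the Weyl group of the Levi of $Q$. The $S_g$-invariants are generated by the elementary symmetric polynomials in $x_1,\dots,x_g$, which match (up to sign) the Chern classes $u_i$ under the Borel map; the full $W$-invariants are the elementary symmetric polynomials in $x_1^{2},\dots,x_g^{2}$, whose vanishing in positive degree translates exactly to the conditions $\ch_{2k}(E)=0$ for $k\geq 1$, i.e.\ to \eqref{equ:basicrel} by \eqref{equ:Chernchar}.

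Finally, to upgrade this to a statement over $\ZZ$, I would use the Bruhat decomposition: $Y_g$ admits a cellular decomposition into Schubert cells of even real dimension, indexed by strict partitions with parts $\leq g$, so $H^{\bullet}(Y_g;\ZZ)$ is torsion-free with Poincar\'e polynomial $\prod_{i=1}^{g}(1+t^{2i})$. The main obstacle I expect is precisely this last dimension count: verifying that the $g$ nontrivial components of \eqref{equ:basicrel} sitting in degrees $4,8,\ldots,4g$ form a regular sequence in $\ZZ[u_1,\dots,u_g]$, so that the quotient ring has Hilbert series $\prod_{i=1}^{g}(1+t^{2i})$. Once this is confirmed, the tautological surjection from the presented ring onto $H^{\bullet}(Y_g;\ZZ)$ is a map between free $\ZZ$-modules of equal finite rank in every degree, hence an isomorphism.
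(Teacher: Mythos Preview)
The paper does not actually prove this theorem: relation \eqref{equ:basicrel} is derived from the tautological sequence just as you do, and for the rest the reader is simply referred to van der Geer's survey \cite[p.~492]{vdghomsurvey}. Your proposal therefore already contains considerably more argument than the paper.

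Your derivation of the relation via $Q\cong E^{*}$ and Whitney is correct, and your Borel presentation over $\QQ$ is right: the positive-degree $W$-invariants are generated by the $e_k(x_1^2,\dots,x_g^2)$, whose vanishing is exactly \eqref{equ:basicrel}. The Bruhat cell count is also correct, with Schubert cells indexed by strict partitions in $\{1,\dots,g\}$ and Poincar\'e polynomial $\prod_{i=1}^g(1+t^{2i})$.

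There is one genuine gap in the integral step. You invoke ``the tautological surjection from the presented ring onto $H^{\bullet}(Y_g;\ZZ)$'', but you have not established surjectivity over $\ZZ$; Borel's theorem only yields this rationally. A map of free $\ZZ$-modules of equal finite rank that is an isomorphism after $\otimes\,\QQ$ is injective but need not be surjective (e.g.\ $\ZZ\xrightarrow{\times 2}\ZZ$). To close the argument you must show that the $u_i$ generate $H^{\bullet}(Y_g;\ZZ)$ as a ring, for instance by identifying each $u_i$ with a special Schubert class and using the Lagrangian Giambelli formula (Pragacz) to express every Schubert class as an integral polynomial in the $u_i$. Once surjectivity is in hand, your rank comparison finishes the proof immediately; in fact the regular-sequence check becomes redundant, since a surjection of finitely generated free $\ZZ$-modules of equal rank is automatically an isomorphism.
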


\begin{df}\label{def:Rg}
By $R_g$ we denote the abstract graded ring generated by elements $u_i; i = 1, \ldots,g$ subject to  relation (\ref{equ:basicrel}).
\end{df}

In particular, the dimension of $R_g$ as a vector space is equal to $2^g$.
As a consequence of Theorem \ref{thm:tautringcompactdual} and the definition of $R_g$ we obtain
\begin{prop}
The intersection form on $H^{\bullet}(Y_g,\ZZ)$ defines a perfect pairing on $R_g$. 
The ring $R_g$ is a Gorenstein ring with socle $u_1u_2 \ldots u_g$. Moreover there are natural isomorphisms
\begin{equation*}
R_g/(u_g) \cong R_{g-1}.
\end{equation*}
\end{prop}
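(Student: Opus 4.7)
The plan is to deduce the first two claims from the identification $R_g \cong H^{\bullet}(Y_g,\ZZ)$ provided by Theorem \ref{thm:tautringcompactdual}, and to prove the third by a direct comparison of defining relations.

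For the perfect pairing and Gorenstein property, I would use that $Y_g$ is a smooth complex projective variety of complex dimension $d = g(g+1)/2$, so Poincar\'e duality yields a perfect intersection pairing on $H^{\bullet}(Y_g,\QQ)\cong R_g\otimes\QQ$ with values in $H^{2d}(Y_g,\QQ)\cong\QQ$. Since $R_g$ is graded Artinian with one-dimensional top piece, this already forces it to be Gorenstein with socle concentrated in total degree $d$. It then remains to pin down the socle generator. The monomial $u_1 u_2 \cdots u_g$ has total degree $1+2+\cdots+g = d$, so it lies in this socle, and once known to be nonzero it must span. For nontriviality I would appeal to Schubert calculus on the Lagrangian Grassmannian $Y_g$: the $u_i = c_i(E)$ are the special Schubert classes, and Pragacz's Pfaffian formula expresses the top Schubert class $\sigma_{(g,g-1,\ldots,1)}$ (the class of a point) as a nonzero integer multiple of $u_1 u_2 \cdots u_g$.

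The isomorphism $R_g/(u_g)\cong R_{g-1}$ I would prove by direct comparison of presentations. Substituting $u_g = 0$ in the defining relation (\ref{equ:basicrel}) of $R_g$ gives
\[
(1 + u_1 + \cdots + u_{g-1})(1 - u_1 + \cdots + (-1)^{g-1} u_{g-1}) = 1,
\]
which is precisely the defining relation of $R_{g-1}$. Expanding (\ref{equ:basicrel}) coefficient by coefficient, the odd-degree equations vanish identically by the involution $k\leftrightarrow n-k$, the degree-$2g$ coefficient is $\pm u_g^2$, and the remaining even-degree coefficients, at degrees $2,4,\ldots,2(g-1)$, reduce after the substitution to exactly the relations defining $R_{g-1}$. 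The substitution therefore descends to a surjective ring homomorphism $R_g \to R_{g-1}$; the kernel contains $(u_g)$, and equality follows because the substitution map from $\ZZ[u_1,\ldots,u_g]$ already has kernel $(u_g)$.

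The main obstacle is the verification that $u_1 u_2 \cdots u_g$ does not vanish in the top degree of $R_g$. From the abstract presentation alone this is not automatic, and the natural argument requires either the geometric input from the Pragacz/Giambelli-type formulas for the Lagrangian Grassmannian, or an inductive argument exploiting the third part of the proposition together with non-degeneracy of the intersection pairing.
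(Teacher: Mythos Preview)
Your proposal is correct and follows essentially the same line as the paper, which simply records the proposition as a consequence of the identification $R_g \cong H^{\bullet}(Y_g,\ZZ)$ (Theorem~\ref{thm:tautringcompactdual}) together with the explicit monomial basis $\prod_i u_i^{\varepsilon_i}$, $\varepsilon_i\in\{0,1\}$, and Poincar\'e duality on $Y_g$; your direct presentation argument for $R_g/(u_g)\cong R_{g-1}$ is exactly the natural one. One small remark: you pass to $\QQ$-coefficients for the pairing, but since $Y_g$ has only even-dimensional Schubert cells its integral cohomology is torsion-free, so Poincar\'e duality already gives a perfect pairing over $\ZZ$ as stated.
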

As a vector space $R_g$ is generated by $\prod_iu_i^{\varepsilon_i}$ with $\varepsilon_i \in \{0,1\}$ and the duality is given by $\varepsilon_i \mapsto  1 - \varepsilon_i$.

\section{The tautological ring of $\ab$}\label{sec:tautologicalring}

We have already encountered  the {\em Hodge bundle} $\EE$ on $\ab$ in Section \ref{sec:analyticapproach}. There we defined it as  $\EE= \pi_*(\Omega^1_{\calX_g/\ab})$ where $\pi: \calX_g \to \ab$ is the universal abelian variety. 
As we pointed out this is an orbifold bundle or, alternatively, an honest vector bundle on $\ab(\ell)$ for $\ell \geq 3$, as can be seen from the construction of the  
universal family $\calX_g(\ell) \to \ab(\ell)$ given in (\ref{equ:universal}).
We use the following notation for  the Chern classes
\begin{equation} 
\lambda_i=c_i(\EE).
\end{equation}
We can view these in either Chow or cohomology (with rational coefficients). Indeed, in view of the fact that the group $\Sp(2g,\ZZ)$ does not act freely, we will from now on mostly
work with Chow or cohomology with rational coefficients. 

There is also another way in which the Hodge bundle can be defined.
Let us recall that it can be realized explicitly as the quotient of the trivial bundle $\HH_g\times \CC^g$ on which the group $\Sp(2g,\ZZ)$ acts by
\begin{equation} 
M=\left(
\begin{array}{cc}
A          &  B\\
C &   D 
\end{array}
\right) : (\tau,v) \mapsto (M(\tau), (C\tau +D)v).
\end{equation}

One can also consider this construction in two steps. First, one considers the embedding of $\HH_g$ into its compact dual  $Y_g = GSp(2g,\CC)/Q$ as explained in Section~\ref{sec:analyticapproach}.
Secondly, one can prove that $\EE$ coincides with the quotient of the restriction to $\HH_g$ of the tautological subbundle $E$ defined in \eqref{lem:basicexact}, by the natural $\Sp(2g,\ZZ)$-action.

As explained in \cite[\S~13]{vdgsurvey}, this is a special case of a construction that associates with every complex representation of $\GL(g,\CC)$ a homolomorphic vector bundle on $\ab$.  
This construction is very  important in the theory of modular forms and we will come back to it (in a slightly different guise) in Section \ref{sec:proportionality} below.

\begin{df}
The {\em tautological ring} of $\ab$ is the subring defined by the classes $\lambda_i, i=1, \ldots, g$. We will use this both in the Chow ring $\CH_{\QQ}^{\bullet}(\ab)$ or in cohomology $H^{\bullet}(\ab,\QQ)$. 
\end{df}
The main properties of the tautological ring can be summarized  by the following  
\begin{thm}\label{thm:tautological}
The following holds in $\CH_{\QQ}^{\bullet}(\ab)$:
\begin{itemize}
\item[\rm (i)]
$(1 + \lambda_1 + \lambda_2 + \ldots + \lambda_g)(1 - \lambda_1 + \lambda_2 - \ldots + (-1)^g\lambda_g)=1$
\item[\rm (ii)] $\lambda_g=0$
\item[\rm (iii)] There are no further relations between the $\lambda$-classes on $\ab$ and hence the tautological ring of $\ab$ is isomorphic to $R_{g-1}$. 
\end{itemize}
The same is true in $H^{\bullet}(\ab,\QQ)$.
\end{thm}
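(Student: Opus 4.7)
Part (i) will be derived from the variation of Hodge structures associated to the universal family. Working on the level covers $\ab(\ell)$ for $\ell\geq 3$ and descending in the orbifold sense to $\ab$, consider the local system $\VV:=R^1\pi_*\QQ$, where $\pi\colon\calX_g(\ell)\to\ab(\ell)$ denotes the universal family. The underlying smooth complex vector bundle $\VV\otimes\calO$ is flat and hence has trivial rational Chern classes. The Hodge decomposition provides a $C^\infty$ splitting $\VV\otimes\calO\cong \EE\oplus\overline{\EE}$, where $\overline{\EE}$ denotes the complex conjugate of $\EE$ as a smooth vector bundle and satisfies $c_i(\overline{\EE})=(-1)^i\lambda_i$. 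Relation (i) then follows by multiplicativity of the total Chern class; equivalently, it is the statement $\ch_{2k}(\EE)=0$ for $k\geq 1$ as in \eqref{equ:Chernchar}.

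Part (ii) is the classical vanishing theorem of van der Geer. The plan is to pass to a smooth toroidal compactification $\Tor$ to which the Hodge bundle extends naturally. On the boundary the universal object degenerates to a semi-abelian variety with nontrivial toric part, which produces a canonical section of the extended line bundle $\det\EE$ whose zero locus is supported entirely on the boundary divisor of $\Tor$. Restricting back to $\ab$ one obtains $\lambda_g=0$ in $\CH^g_\QQ(\ab)$, and therefore also in $H^{2g}(\ab,\QQ)$.

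For part (iii), relations (i) and (ii) together yield a surjective graded ring homomorphism from $R_{g-1}$ onto the tautological ring of $\ab$. To prove injectivity, we invoke the Hirzebruch--Mumford proportionality principle: for the smooth toroidal compactification $\Tor$, the top-degree intersection numbers of monomials in the $\lambda_i$ are, up to a fixed positive rational constant depending only on $\Gamma_g$, proportional to the corresponding intersection numbers of the $u_i$ on the compact dual $Y_g$. Combined with Theorem~\ref{thm:tautringcompactdual} and the perfect pairing on $R_g$ (which restricts to a perfect pairing on $R_{g-1}=R_g/(u_g)$), this shows that the $2^{g-1}$ monomials $\prod_{i=1}^{g-1}\lambda_i^{\varepsilon_i}$ with $\varepsilon_i\in\{0,1\}$ remain linearly independent in the tautological subring, producing the desired isomorphism simultaneously in Chow and in cohomology.

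The principal obstacle lies in part (ii): relation (i) is a formal consequence of flatness of $\VV$ and cannot by itself force the vanishing of $\lambda_g$, which crucially exploits the non-compactness of $\ab$. Making the degeneration argument rigorous requires the explicit local description of degenerating ppav on $\Tor$ and a careful check that the canonical section of $\det\EE$ vanishes to the correct order along each irreducible boundary component --- a delicate, but by now classical, computation.
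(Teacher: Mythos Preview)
Your argument for part (i) is correct and is precisely the standard one (due to van der Geer): the flat bundle underlying the variation of Hodge structures has trivial Chern classes, and the $C^\infty$ splitting $\EE\oplus\overline{\EE}$ gives $c(\EE)c(\EE^\vee)=1$.

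Part (ii), however, contains a genuine error. You claim that the toric part of the degenerating semi-abelian variety produces a section of the \emph{line bundle} $\det\EE$ vanishing only along the boundary of $\Tor$. But $c_1(\det\EE)=\lambda_1$, not $\lambda_g$: if such a section existed it would force $\lambda_1=0$ in $\CH^1_\QQ(\ab)$, contradicting part (iii) for every $g\ge2$. In fact no such section exists for $g\ge2$ (equivalently, there is no Siegel cusp form whose divisor is purely boundary). What the toric part actually provides is a trivial rank-one \emph{subbundle} of $\tilde\EE$ along the boundary divisor, which tells you only that $\lambda_g$ restricted to the boundary vanishes --- the opposite of what you need. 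Van der Geer's proof of $\lambda_g=0$ on $\ab$ proceeds instead via Grothendieck--Riemann--Roch for the universal abelian scheme $\pi\colon\calX_g\to\ab$: since $T_\pi\cong\pi^*\EE^\vee$ one has $\pi_*\td(T_\pi)=0$, while $\pi_!\calO_{\calX_g}=\sum(-1)^i\wedge^i\EE^\vee$, and the Borel--Serre identity $\sum(-1)^i\ch(\wedge^iE)=c_g(E)\td(E)^{-1}$ then yields $\lambda_g=0$.

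Part (iii) also has a gap. Hirzebruch--Mumford proportionality computes intersection numbers on the \emph{compact} space $\STor$, so it shows that the monomials $\prod\lambda_i^{\varepsilon_i}$ are independent there. But $\ab$ is open in $\STor$, and the restriction map $\CH^\bullet_\QQ(\STor)\to\CH^\bullet_\QQ(\ab)$ has kernel generated by boundary-supported classes; you have not explained why no nonzero element of $R_{g-1}\subset R_g$ lies in this kernel. Since there is no intersection pairing on the non-compact $\ab$, the perfect-pairing argument does not transfer automatically. One way to close this is to reduce, via the Gorenstein property of $R_{g-1}$, to showing that the socle $\lambda_1\cdots\lambda_{g-1}$ is nonzero on $\ab$ itself; van der Geer handles this by a separate argument.
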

\begin{proof}
We refer the reader to van der Geer's paper \cite{vdgeercycles}, where the above statements appear as Theorem 1.1, Proposition 1.2 and 
Theorem 1.5  respectively. This is further discussed in \cite{vdghomsurvey}.
\end{proof}

\section{Compactifications and the proportionality principle}\label{sec:proportionality}

The space $\ab$ admits several compactifications which are geometrically relevant. The smallest compactification is the Satake compactification $\Sat$, which is a special case of the Baily--Borel compactification for
locally symmetric domains.   Set-theoretically this is simply the disjoint union 
\begin{equation}\label{eq:strata-satake}
\Sat = \ab \sqcup \ab[g-1] \sqcup \ldots \sqcup \ab[0].
\end{equation}
It is, however, anything but trivial to equip this with a suitable topology and an analytic structure. This can be circumvented by using modular forms.
A {\em modular form} of {\em weight} $k$ is a holomorphic function
\begin{equation*}
f: \HH_g \to \CC
\end{equation*}
such that for every $M \in \Sp(2g,\ZZ)$ with 
$M=\left(
\begin{array}{cc}
A          &  B\\
C &   D
\end{array} \right)$
the following holds:
\begin{equation*}
f(M(\tau))= \det(C\tau + D)^kf(\tau). 
\end{equation*}
In terms of the Hodge bundle modular forms of weight $k$ are exactly the sections of the $k$-fold power of the determinant of the Hodge bundle $\det(\EE)^{\otimes k}$.
If $g=1$, then we must also add a growth condition on $f$ which ensures holomorphicity at infinity, for $g\geq 2$ this condition is automatically satisfied. 
We denote the space of all modular forms of weight $k$ with respect to the full modular group $\Gamma_g$  by $M_k(\Gamma_g)$. This is a finite dimensional vector space.
Using other representations of $\Sp(2g,\CC)$ one can generalize this concept to {\em vector valued Siegel modular forms}. 
For an introduction to 
modular forms we refer the reader to \cite{freitagbooksiegel}, \cite{vdgsurvey}. 
The spaces $M_k(\Gamma_g)$ form a graded ring
$\oplus_{k\geq 0} M_k(\Gamma_g)$ and one obtains 
\begin{equation}  
\Sat = \Proj  \oplus_{k \geq 0}M_k(\Gamma_g).
\end{equation}
Indeed, one can take this as the definition of $\Sat$. The fact that the graded algebra of modular forms is finitely generated implies that $\Sat$ is a projective variety. It contains $\ab$ as a Zariski open subset, thus providing
$\ab$ with the structure of a quasi-projective variety.  We say that a modular form is a {\em cusp form} if its restriction to the {\em boundary} of $\Sat$, by which we mean the complement of $\ab$ in $\Sat$, vanishes. 
The space of cusp forms of weight $k$ 
is denoted by $S_k(\Gamma_g)$. 

The Satake compactification $\Sat$ is naturally associated to $\ab$. However, it has the disadvantage that it is badly singular along the boundary.
This can  be remedied by considering toroidal compactifications $\Tor$ of $\ab$. These compactifications were introduced by Mumford, following ideas of Hirzebruch on the resolution of surface singularities. We refer the reader  to the 
standard book by 
Ash, Mumford, Rapoport and Tai \cite{amrtbook}. Toroidal compactifications depend on choices, more precisely we need an admissible collection of admissible fans. In the case of principally polarized abelian varieties,
this reduces to the choice of one admissible fan $\Sigma$ covering the rational closure    $\Sym^2_{\rc}(\RR^g)$ of the space  $\Sym_{>0}^2(\RR^g)$ of positive definite symmetric $g \times g$-matrices.   
To be more precise, an {\em admissible fan} $\Sigma$ is a collection of rational polyhedral cones lying in  $\Sym^2_{\rc}(\RR^g)$, with the following properties: it is closed under taking intersections and faces, the union of these cones 
covers   $\Sym^2_{\rc}(\RR^g)$ and the collection is invariant under the natural action of $\GL(g,\CC)$ on  $\Sym^2_{\rc}(\RR^g)$ with the additional property that there are only finitely many $\GL(g,\CC)$-orbits of such cones.   
The construction of such fans is non-trivial and closely related to the reduction theory of quadratic forms. 

There are three classical decompositions (fans) of  $\Sym^2_{\rc}(\RR^g)$ that have all been well studied and whose associated toroidal compactifications are by now reasonably well understood, namely the {\em second Voronoi}, 
the  {\em perfect cone or first Voronoi} and the {\em central cone decomposition}, leading to the compactifications $\Vor$, $\Perf$ and $\Centr$ respectively.   The Voronoi compactification $\Vor$ has a
{\em modular} interpretation due to Alexeev \cite{alexeev} and Olsson \cite{olsson}. Indeed, Alexeev introduced the notions of stable semi-abelic varieties and semi-abelic pairs, for which he constructed a moduli stack. It turns out that this is
in general  not irreducible
and that so-called {\em extra-territorial} components exist. The space $\Vor$ is the normalization of the principal component of the coarse moduli scheme associated to Alexeev's functor.
In contrast to this, Olsson's construction uses logarithmic geometry to give the principal component $\Vor$ directly.
The perfect cone or first Voronoi compactification $\Perf$ is very interesting from the  point of view of the {\em Minimal Model Program (MMP)}. Shepherd-Barron \cite{shepherdbarron} has shown that $\Perf$ is
a $\QQ$-factorial variety  with canonical singularities if  $g\geq 5$ and that  its canonical divisor is nef if $g \geq 12$, in other words $\Perf$  is, in this range, a canonical model in the sense of MMP.
We refer the reader also to  \cite{arsb} where some missing arguments from  \cite{shepherdbarron} were completed.
Finally, the central cone compactification $\Centr$ coincides with the {\em Igusa blow-up} of the Satake compactification $\Sat$ \cite{igusadesing}. 

All toroidal compactifications admit a natural morphism $\Tor \to \Sat$ which restrict to  the identity on $\ab$. 
A priori, a toroidal compactification need not be projective, but there is a projectivity criterion \cite[Chapter 4, \S2]{amrtbook}
which guarantees projectivity if the underlying decomposition $\Sigma$ admits a suitable  piecewise linear $\Sp(2g,\ZZ)$-invariant support function. All 
the toroidal compactifications discussed above are projective.
For the second Voronoi
compactification $\Vor$ it was only in \cite{alexeev} that the existence of a suitable support function was exhibited. 

For $g\leq 3$ the three toroidal compactifications described above coincide, but in general they are all different and none is a refinement of another. Although toroidal compactifications $\Tor$ behave better 
with respect to singularities than the Satake compactification $\Sat$, this does not mean that they are necessarily smooth. To start with, the coarse moduli space of $\ab$ is itself a singular variety due to the existence of abelian varieties with non-trivial 
automorphisms. These are, however, only finite quotient singularities and we can always avoid these by going to level covers of level $\ell \geq 3$. We refer to this situation as {\em stack smooth}.
For $g \leq 3$ the toroidal compactifications described above are also stack smooth, but this changes considerably for $g\geq 4$, when singularities do appear. A priori, the only property we know of these 
singularities is that they are (finite quotients of) toric singularities. For a discussion of the singularities of $\Vor$ and $\Perf$ see \cite{DutourHulekSchuermann}.  
By taking subdivisions of the cones we can for each toroidal compactification 
$\Tor$  obtain a smooth toroidal resolution  $\STor \to \Tor$. We shall refer to these compactifications as (stack) smooth toroidal compactifications, often dropping the word stack in this context.

It is  natural to ask whether the classes $\lambda_i$ can be extended to $\Sat$ or to  toroidal compactifications $\Tor$.
As we will explain later in Section~\ref{sec:stabilization},
it was indeed shown by Charney and Lee \cite{chle}
that the $\lambda$-classes can be lifted
to the Satake compactification $\Sat$ via the restriction map $H^{2i}(\Sat,\QQ) \to H^{2i}(\ab,\QQ)$. These lifts are, however, not canonical. Another lift was obtained by Goresky and Pardon \cite{GoreskyPardon},
working, however, with cohomology  with complex coefficients. Their classes are canonically defined and we denote them by   $\lambda_i^{\operatorname{GP}} \in H^{2i}(\Sat,\CC)$. 
It was recently shown by Looijenga \cite{lo-pardon}
that there are values of $i$ for which the Goresky--Pardon classes have a non-trivial imaginary part
and hence differ from the Charney--Lee classes. 
This will be discussed in more detailed in Theorem~\ref{thm:lo-pardon}.

The next question is whether the $\lambda$-classes can be extended to toroidal compactifications $\Tor$. By a  result of Chai and Faltings \cite{fachbook} the Hodge bundle $\EE$ can be extended to toroidal 
compactifications $\Tor$.  The argument is that  one can define a universal semi-abelian scheme over $\Tor$ and fibrewise one can then take the cotangent space at the origin. 
In this way we obtain extensions of the $\lambda$-classes in cohomology or in the operational Chow ring. 
Analytically, Mumford \cite{mumhirz}  proved  that one can extend the Hodge bundle as a vector bundle $\tilde {\EE}$  to any smooth toroidal compactification $\STor$.  Moreover, if $p: \STor \to \Sat$ 
is the canonical map, then by \cite{GoreskyPardon}  we have
\begin{equation*}
c_i(\tilde {\EE})= p^*(\lambda_i^{\operatorname{GP}}).
\end{equation*}
We also note the following: if $D$ is the (reducible)  boundary divisor in a level $\STor(\ell)$ with $\ell\geq 3$, then by \cite[p. 25]{fachbook}
\begin{equation*} 
\Sym^2(\tilde {\EE})\cong \Omega^1_{\STor(\ell)}(D).
\end{equation*}
In order to simplify the notation we denote the classes $c_i(\tilde {\EE})$ on $\STor$ also by $\lambda_i$. It is a crucial result that the basic relation (i) of Theorem \ref{thm:tautological} also extends to smooth toroidal
compactifications.
\begin{thm}\label{thm:relationChowcoho}
The following relation holds in $\CH_{\QQ}^{\bullet}(\STor)$:
\begin{equation}\label{equ:tautringtor}
(1 + \lambda_1 + \lambda_2 + \ldots + \lambda_g)(1 - \lambda_1 + \lambda_2 - \ldots + (-1)^g\lambda_g)=1.
\end{equation}
 \end{thm}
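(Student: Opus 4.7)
The plan is to lift the relation of Theorem~\ref{thm:tautological}(i), which is already known on $\ab$, to the smooth toroidal compactification $\STor$ by extending the underlying variation of Hodge structure of the universal abelian variety across the boundary. Conceptually, on $\ab$ the relation encodes the fact that if $\pi:\calX_g\to\ab$ is the universal family, the $\QQ$-local system $\VV=R^1\pi_*\QQ$ of rank $2g$ is flat and splits $C^\infty$-ly as $\VV_{\CC}\cong \EE\oplus \overline{\EE}$; since a flat bundle has trivial rational Chern classes and $c_i(\overline{\EE})=c_i(\EE^\vee)$ (where we use the principal polarization to identify $\overline{\EE}$ with $\EE^\vee$ up to a Tate twist invisible to Chern classes), one obtains $c(\EE)\cdot c(\EE^\vee)=c(\VV_{\CC})=1$ on $\ab$.

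The first substantive step is to extend the entire Hodge-theoretic package to $\STor$. By construction of the admissible fan $\Sigma$ underlying $\STor$, the local system $\VV$ has unipotent monodromy around every irreducible component of the boundary divisor $D$, so Deligne's canonical extension produces a vector bundle $\tilde{\VV}$ on $\STor$ carrying a logarithmic connection with nilpotent residues. By Schmid's nilpotent orbit theorem the Hodge filtration extends, and the sub-bundle $\tilde{\EE}\subset \tilde{\VV}$ thereby obtained coincides with the Faltings--Chai--Mumford extension of the Hodge bundle discussed just before the theorem. The polarization likewise extends, keeping $\tilde{\EE}$ Lagrangian and identifying the quotient with $\tilde{\EE}^\vee$ up to a Tate twist. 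This yields a short exact sequence
\begin{equation*}
0\to \tilde{\EE}\to \tilde{\VV}\to \tilde{\EE}^\vee\to 0
\end{equation*}
on $\STor$, and multiplicativity of total Chern classes reduces the theorem to the identity $c(\tilde{\VV})=1$ in $\CH_{\QQ}^{\bullet}(\STor)$.

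This remaining vanishing is precisely what Mumford establishes in \cite{mumhirz}: the Hodge metric on $\VV_{\CC}$ extends to a singular-but-good Hermitian metric on $\tilde{\VV}$ whose Chern--Weil forms are locally integrable closed currents on $\STor$ that represent the Chern classes of $\tilde{\VV}$. On $\ab$ these forms coincide with the Chern forms computed from the flat connection and therefore vanish identically, so the extended Chern classes of $\tilde{\VV}$ are zero. Combining this with the displayed short exact sequence yields the desired relation in $\CH_{\QQ}^{\bullet}(\STor)$.

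The main obstacle is exactly this final step: proving $c(\tilde{\VV})=1$ not merely in rational cohomology, where it follows relatively directly from Chern--Weil theory applied to Mumford's good singular metrics, but in the Chow ring of $\STor$, as the statement demands. Promoting the cohomological identity to the Chow-theoretic one requires either a Chow-level refinement of Mumford's good-singular-metric argument, or appealing to subsequent results of Esnault--Viehweg type on Chern classes of canonical extensions with nilpotent residues along a normal crossings divisor. The delicate analysis of the behaviour of the Hodge metric along $D$, and the control of the resulting currents up to rational equivalence rather than merely homological equivalence, is the technical heart of the argument.
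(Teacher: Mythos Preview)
Your proposal is correct and essentially reconstructs the arguments behind the citations the paper gives: the paper's own proof consists solely of pointing to van der Geer \cite{vdgeercycles} for the cohomological statement and to Esnault--Viehweg \cite{esvi} for the Chow-theoretic one, and your sketch---extend $\VV=R^1\pi_*\QQ$ as Deligne's canonical extension $\tilde{\VV}$, use the extended Hodge filtration to get $0\to\tilde{\EE}\to\tilde{\VV}\to\tilde{\EE}^\vee\to 0$, and reduce to $c(\tilde{\VV})=1$---is precisely the skeleton of those arguments. You have also correctly located the crux: the vanishing of the Chern classes of the canonical extension with nilpotent residues, which in cohomology follows from Chern--Weil theory with Mumford's good singular metrics (or directly from the logarithmic Atiyah class), and in Chow is exactly the content of Esnault--Viehweg's result.

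Two minor remarks. First, the unipotence of the local monodromy that you invoke for Deligne's extension holds on a level-$\ell$ cover with $\ell\geq 3$; at full level one works stack-theoretically or descends after the fact, which is harmless over $\QQ$. Second, your attribution of $c(\tilde{\VV})=1$ in cohomology to Mumford's paper \cite{mumhirz} is slightly indirect: Mumford's good-metric argument is formulated for $\tilde{\EE}$, while the cleanest route to $c(\tilde{\VV})=1$ is the logarithmic-connection argument (nilpotent residues force the logarithmic Chern classes to vanish), which is the viewpoint Esnault--Viehweg then promote to Chow. This does not affect the validity of your outline.
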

\begin{proof}
This was shown in cohomology by van der Geer \cite{vdgeercycles} and in the Chow ring by Esnault and Viehweg \cite{esvi}.
\end{proof}

As before we will define the {\em tautological} subring of the Chow ring  $\CH_{\QQ}^{\bullet}(\STor)$ (or of the cohomology ring  $H^{\bullet}(\STor,\QQ)$) as the subring generated by 
the (extended) $\lambda$-classes.  Now $\lambda_g \neq 0$ and we obtain the following
\begin{thm}\label{thm:tautring}
The tautological ring of $\STor$ is isomorphic to $R_g$.
\end{thm}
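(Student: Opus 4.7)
The plan is as follows. By Theorem~\ref{thm:relationChowcoho}, the relation~(\ref{equ:tautringtor}) holds among the extended $\lambda$-classes on $\STor$, so assigning $u_i \mapsto \lambda_i$ yields a well-defined graded ring homomorphism $\varphi \colon R_g \to \CH_{\QQ}^{\bullet}(\STor)$ whose image is, by definition, the tautological ring. Thus $\varphi$ is automatically surjective onto the tautological ring, and the theorem reduces to showing that $\varphi$ is also injective.

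For injectivity I would exploit the Gorenstein property of $R_g$ recalled after Theorem~\ref{thm:tautringcompactdual}: the ring $R_g$ is graded Artinian with one-dimensional socle spanned by the top class $u_1 u_2 \cdots u_g$. In such a ring every nonzero ideal contains the socle --- indeed, if $0 \neq a \in I$ is chosen of maximal degree, the irrelevant ideal annihilates $a$, so $a$ must span the socle. Consequently $\ker \varphi = 0$ if and only if $\varphi(u_1 u_2 \cdots u_g) = \lambda_1 \lambda_2 \cdots \lambda_g$ is nonzero in $\CH_{\QQ}^{\bullet}(\STor)$.

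To establish this non-vanishing I would pass to a level cover $\STor(\ell)$ with $\ell \geq 3$, where $\tilde\EE$ is an honest vector bundle on a smooth projective variety, and invoke Mumford's extension of Hirzebruch's proportionality principle to the non-compact arithmetic quotient \cite{mumhirz}. This gives an identity of the form
\[
\int_{\STor(\ell)} \lambda_1 \lambda_2 \cdots \lambda_g \;=\; C_\ell \int_{Y_g} u_1 u_2 \cdots u_g
\]
with a strictly positive constant $C_\ell$ proportional to the covolume of $\Gamma_g(\ell)$. The right-hand side is nonzero because $u_1 \cdots u_g$ spans the socle of $R_g \cong H^{\bullet}(Y_g,\QQ)$ and $Y_g$ is a smooth projective variety of complex dimension $g(g+1)/2$. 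Hence $\lambda_1 \cdots \lambda_g$ is nonzero on $\STor(\ell)$, and therefore on $\STor$ since pullback along the finite quotient map is injective with rational coefficients.

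The main obstacle is the proportionality identity itself. The substantive geometric input is the Mumford isomorphism $\Sym^2 \tilde\EE \cong \Omega^1_{\STor(\ell)}(D)$ cited from \cite{fachbook} just before the theorem, which together with a Grothendieck--Riemann--Roch or Gauss--Bonnet style computation allows one to compare Chern-number integrals on the non-compact quotient $\STor(\ell)$ with those on its compact dual $Y_g$, reducing the cohomology calculation to the already-understood ring $R_g = H^{\bullet}(Y_g,\QQ)$. Everything outside this deep input is formal manipulation in the Gorenstein ring $R_g$.
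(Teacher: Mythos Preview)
Your proposal is correct and follows essentially the same approach as the paper: both establish a surjection $R_g \twoheadrightarrow$ (tautological ring) from the relation of Theorem~\ref{thm:relationChowcoho}, and both deduce injectivity from the Gorenstein/perfect-pairing structure of $R_g$ together with the non-vanishing of the socle class $\lambda_1\cdots\lambda_g$, which in turn rests on Hirzebruch--Mumford proportionality. The paper's proof is terser, simply noting that $\lambda_1\cdots\lambda_g$ is a nonzero multiple of $\lambda_1^{g(g+1)/2}$ and that the latter is nonzero; your argument via ``every nonzero ideal contains the socle'' is an equivalent and slightly more explicit way of using the Gorenstein property. One small inaccuracy: the proportionality constant is nonzero but not literally positive in general (it carries the sign $(-1)^{g(g+1)/2}$), though this does not affect the argument.
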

\begin{proof}
We first note that the relation (\ref{equ:tautringtor}) holds. The statement then follows since the 
intersection form defines a perfect pairing on the $\lambda$-classes. In particular we have
\begin{equation*}
\lambda_1\ldots\lambda_g=\frac{1}{(g(g+1))/2)!}\left(\prod_{j=1}^g(2j-1)!! \right)\lambda_1^{\frac12g(g+1)} \neq 0.
\end{equation*}  
\end{proof}
Indeed, one can think of the tautological ring as part of the cohomology contained in all (smooth) toroidal compactifications of $\ab$. Given  any two such toroidal compactifications one can always find 
a common smooth resolution and pull the $\lambda$-classes back to this space. In this sense  the tautological ring does not depend on a particular chosen compactification $\STor$.  

The top intersection numbers of the $\lambda$-classes can be computed explicitly by relating them to (known) intersection numbers on the compact dual.  This is a special case of the 
{\em Hirzebruch--Mumford proportionality}, which had first been found by Hirzebruch in the co-compact case \cite{hirzprop1}, \cite{hirzprop2}  and then been extended by Mumford \cite{mumhirz} to the non-compact case.

\begin{thm}  
The top intersection numbers of the $\lambda$-classes on a smooth toroidal compactification $\STor$  are proportional to the corresponding top intersection numbers of the Chern classes of the universal 
subbundle on the compact dual $Y_g$. More precisely if $n_i$ are non-negative integers with $\sum in_i = g(g+1)/2$,
then
\begin{equation*}
\lambda_1^{n_1}\cdot \ldots \cdot \lambda_g^{n_g}= (-1)^{\frac12g(g+1)}\frac{1}{2^g}\left(\prod_{j=1}^g\zeta(1-2j)\right)u_1^{n_1}\cdot \ldots \cdot u_g^{n_g}.
\end{equation*}
\end{thm}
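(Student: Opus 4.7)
The plan is to invoke the Hirzebruch--Mumford proportionality principle, which is precisely what Mumford \cite{mumhirz} proved in this generality. I would organize the argument in three steps.

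First, the structural part: I would appeal to Mumford's extension of Hirzebruch's proportionality principle to the non-compact arithmetic case. The statement is that for a homogeneous vector bundle on the bounded symmetric domain $\HH_g$ (here, the restriction of the tautological bundle $E$ from $Y_g$), the Chern classes of the corresponding automorphic vector bundle on $\ab$ admit extensions as closed differential forms on any smooth toroidal compactification $\STor$ whose top-degree integrals are proportional, with a single constant $c_g$ depending only on $g$ and $\Gamma_g$, to the corresponding intersection numbers of $u_1,\dots,u_g$ on the compact dual $Y_g$. This immediately yields an equality of the form
\begin{equation*}
\lambda_1^{n_1}\cdots \lambda_g^{n_g} = c_g \cdot u_1^{n_1}\cdots u_g^{n_g}
\end{equation*}
for every multi-index with $\sum i n_i = g(g+1)/2$, so only the constant $c_g$ needs to be pinned down.

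Second, the normalization: to compute $c_g$, I would specialize to a single convenient monomial (for instance $\lambda_1^{g(g+1)/2}$, which is a multiple of the top power of a K\"ahler class) and express both sides as volume integrals in compatible invariant metrics. The ratio $c_g$ is then, up to a sign $(-1)^{g(g+1)/2}$ coming from the signature of the Killing form on the non-compact dual, equal to the ratio $\mathrm{vol}(\ab)/\mathrm{vol}(Y_g)$ with respect to the invariant K\"ahler metric attached to $\det(\EE)$.

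Third, the volume computation: the volume of $Y_g$ in the chosen metric is a rational number expressible through Schubert calculus from Theorem \ref{thm:tautringcompactdual}, while the volume of $\ab = \Gamma_g \backslash \HH_g$ is given by Siegel's classical formula, which expresses it as $\prod_{j=1}^g$ of Gamma and zeta factors at even integers; the functional equation of $\zeta$ rewrites this in terms of $\zeta(1-2j)$, and the factor $2^{-g}$ appears naturally from the product of normalizing constants for the embedding of the Hodge bundle into the pullback of the tautological bundle on $Y_g$. Combining these with the sign yields exactly the stated constant $(-1)^{g(g+1)/2} 2^{-g} \prod_{j=1}^g \zeta(1-2j)$.

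The main obstacle is the first step, i.e.\ the passage from the cocompact setting of Hirzebruch's original proportionality to the non-compact quotient $\ab$. The difficulty is that the Chern--Weil forms representing $\lambda_i$ on $\ab$ do not extend as smooth forms across the toroidal boundary, and one must show both that suitable extensions exist and that their integrals over $\STor$ agree with the $L^2$-integrals over $\ab$ (so that no boundary contribution spoils the proportionality). This is exactly the content of Mumford's paper \cite{mumhirz}, where the existence of the extension $\tilde{\EE}$ and the good behavior of its Chern forms near the boundary divisor $D$ of $\STor$ is established via the logarithmic singularity structure $\Sym^2(\tilde\EE) \cong \Omega^1_{\STor(\ell)}(D)$ recalled above; once this analytic input is in place, the remaining volume computation is essentially Siegel's.
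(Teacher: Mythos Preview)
Your proposal is correct and aligns with the paper's treatment: the paper does not give its own proof of this theorem but simply presents it as a special case of the Hirzebruch--Mumford proportionality, citing Hirzebruch \cite{hirzprop1,hirzprop2} for the cocompact case and Mumford \cite{mumhirz} for the extension to the non-compact setting. Your three-step outline (structural proportionality, normalization via a single monomial, volume computation via Siegel's formula) is in fact a faithful sketch of the argument in \cite{mumhirz} that the paper is pointing to, so there is nothing to correct or contrast.
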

As a corollary, see also the proof of Theorem \ref{thm:tautring}, we obtain 
\begin{cor}
\begin{equation*}
\lambda_1^{\frac12g(g+1)}=
(-1)^{\frac12g(g+1)}\frac{(g(g+1)/2)!}{2^{g}}\left(\prod_{k=1}^g\frac{\zeta(1-2k)}{(2k-1)!!}\right).
\end{equation*} 
\end{cor}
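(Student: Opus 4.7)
The plan is to deduce the corollary by combining the Hirzebruch--Mumford proportionality theorem just stated with the monomial identity already invoked in the proof of Theorem \ref{thm:tautring}, which relates $\lambda_1\cdots\lambda_g$ and $\lambda_1^{g(g+1)/2}$.

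First, I would apply the proportionality theorem with multi-index $(n_1,\dots,n_g)=(1,1,\dots,1)$. This is admissible since $\sum_{i=1}^g i = g(g+1)/2$ coincides with the dimension of $\STor$, so it computes a genuine top intersection number:
\begin{equation*}
\lambda_1\lambda_2\cdots\lambda_g \;=\; (-1)^{g(g+1)/2}\,\frac{1}{2^g}\,\Bigl(\prod_{j=1}^g\zeta(1-2j)\Bigr)\, u_1u_2\cdots u_g,
\end{equation*}
where on the right $u_1\cdots u_g$ is an intersection number on the compact dual $Y_g$. By the perfect pairing recorded after Theorem \ref{thm:tautringcompactdual}, with duality $\varepsilon_i\mapsto 1-\varepsilon_i$, the socle generator $u_1\cdots u_g$ is dual to the unit, whence $\int_{Y_g} u_1\cdots u_g = 1$.

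Second, I would invoke the identity from the proof of Theorem \ref{thm:tautring},
\begin{equation*}
\lambda_1\lambda_2\cdots\lambda_g \;=\; \frac{\prod_{j=1}^g(2j-1)!!}{(g(g+1)/2)!}\,\lambda_1^{g(g+1)/2},
\end{equation*}
and solve for $\lambda_1^{g(g+1)/2}$. Substituting the value of $\lambda_1\cdots\lambda_g$ obtained in the first step gives precisely the formula stated in the corollary.

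The only substantive step is the second identity, which in fact lives in the abstract Gorenstein ring $R_g$: both $u_1\cdots u_g$ and $u_1^{g(g+1)/2}$ lie in the one-dimensional top-degree piece of $R_g$, so one is a scalar multiple of the other, and the constant $\tfrac{\prod_j(2j-1)!!}{(g(g+1)/2)!}$ must be determined. This is the combinatorial heart of the argument and the main obstacle; one way to compute it is to introduce formal Chern roots $x_1,\dots,x_g$ with $u_i = e_i(x_1,\dots,x_g)$ subject to the even power-sum vanishing $p_{2k}=\sum_i x_i^{2k}=0$ coming from \eqref{equ:Chernchar}, expand $(x_1+\cdots+x_g)^{g(g+1)/2}$, and reduce modulo these relations; equivalently, it amounts to computing the degree of the Lagrangian Grassmannian $Y_g$ in its Plücker-type embedding via Schubert calculus (or the Weyl dimension formula for $\Sp(2g,\CC)$), which is classical.
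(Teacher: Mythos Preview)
Your proposal is correct and follows exactly the route the paper intends: the corollary is deduced by combining the proportionality theorem applied to the monomial $\lambda_1\cdots\lambda_g$ (with $u_1\cdots u_g=1$ on $Y_g$) with the identity $\lambda_1\cdots\lambda_g=\frac{\prod_j(2j-1)!!}{(g(g+1)/2)!}\lambda_1^{g(g+1)/2}$ displayed in the proof of Theorem~\ref{thm:tautring}. Your final paragraph, sketching how this combinatorial constant can be obtained via Chern-root reduction or Schubert calculus on $Y_g$, goes a bit beyond what the paper supplies (which simply quotes the identity), but is in the right spirit.
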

We note that the formula we give here  is the intersection number on the stack $\ab$, i.e. we take the involution given by $-\bf 1$ into account. In particular this means that  the degree of the 
Hodge line bundle on $\Sat[1]$ equals $1/24$.

This can also be rewritten in terms of Bernoulli numbers. Recall that the Bernoulli numbers  $B_j$ are  defined by the generating function
\begin{equation*}
\frac{x}{e^x-1}=\sum_{k=0}^{\infty}B_k\frac{x^k}{k!}, \quad |x| < 2\pi
\end{equation*}
and the relation between the Bernoulli numbers and the $\zeta$-function is given by  
\begin{equation*}
\zeta(-n)=(-1)^n\frac{B_{n+1}}{n+1}
\end{equation*}
respectively
\begin{equation*}
B_{2n}=(-1)^{n-1}\frac{2(2n)!}{(2\pi)^{2n}}\;\zeta(2n).
\end{equation*}

A further application of the Hirzebruch--Mumford proportionality is that it describes the growth behaviour of the dimension of the spaces of modular forms of weight $k$. 
\begin{thm}
The dimension of the space of modular forms of weight $k$ with respect to the group $\Gamma_g$  grows asymptotically as follows when $k$ is even and goes to infinity:
\begin{equation*}
\dim M_k(\Gamma_g) \sim 2^{\frac12(g-1)(g-2)} \ k^{\frac12g(g+1)} \;\prod_{j=1}^g\frac{(j-1)!}{(2j)!}(-1)^{j-1}B_{2j}.
\end{equation*}

\end{thm}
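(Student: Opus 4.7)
The plan is to identify $\dim M_k(\Gamma_g)$ with the leading term of a Hilbert polynomial, compute that leading term via Hirzebruch--Riemann--Roch and the preceding Corollary, and finally convert $\zeta$-values to Bernoulli numbers.

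One has $M_k(\Gamma_g)=H^0(\Sat,L^{\otimes k})$ where $L=\det\EE$ is the Hodge line bundle on $\Sat$ (this is Koecher's principle for $g\ge 2$, and is built into the definition for $g=1$), and $L$ is ample by Baily--Borel; so Serre vanishing forces $H^i(\Sat,L^{\otimes k})=0$ for $i>0$ and $k\gg 0$. On a smooth toroidal resolution $p\colon\STor\to\Sat$ one has $p^*L=\det\tilde\EE$, and
$$\dim M_k(\Gamma_g)\sim\chi\bigl(\STor,\det\tilde\EE^{\otimes k}\bigr).$$

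By Hirzebruch--Riemann--Roch on $\STor$ this Euler characteristic is a polynomial in $k$ of degree $d:=g(g+1)/2$ with leading term $(\lambda_1^d/d!)\,k^d$. The Corollary computes $\lambda_1^d$ on the \emph{stack} $\ab$; the coarse-moduli (and hence $\dim H^0$) leading constant is twice this, because the generic automorphism group $\{\pm\mathbf{1}\}$ has order $2$. A clean way to see this is to work on the level cover $\ab(\ell)$ for $\ell\ge 3$ (a smooth variety, with $-\mathbf{1}\notin\Gamma_g(\ell)$): Riemann--Roch yields $\dim M_k(\Gamma_g(\ell))\sim[\Gamma_g:\Gamma_g(\ell)]\cdot(\lambda_1^d/d!)\,k^d$, and the $\Gamma_g/\Gamma_g(\ell)$-invariants are controlled by Molien's formula; besides the identity, only $-\mathbf{1}$ (which acts trivially on $\HH_g$ and as $(-1)^{gk}$ on $\det\EE^{\otimes k}$) contributes to leading order, giving the extra factor $2/[\Gamma_g:\Gamma_g(\ell)]$ when $gk$ is even. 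Hence for $k$ even,
$$\dim M_k(\Gamma_g)\sim \frac{2\,\lambda_1^d}{d!}\,k^d=\frac{(-1)^d}{2^{g-1}}\,k^d\prod_{j=1}^g\frac{\zeta(1-2j)}{(2j-1)!!}.$$

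To conclude, substitute $\zeta(1-2j)=-B_{2j}/(2j)$ and $(2j-1)!!=(2j)!/(2^j j!)$, and collect. Using the identities $\prod_{j=1}^g (2j)=2^g g!$, $\prod_{j=1}^g 2^j=2^{g(g+1)/2}$ and $\prod_{j=1}^g j!=g!\prod_{j=1}^g(j-1)!$, the powers of $2$ collapse to $2^{d-2g+1}=2^{(g-1)(g-2)/2}$; a parity check gives $(-1)^{d+g}=(-1)^{g(g-1)/2}=\prod_{j=1}^g(-1)^{j-1}$, reproducing the stated asymptotic. The only non-routine step is the factor-of-$2$ correction for the generic involution $-\mathbf{1}$; once that is handled, the remainder is a bookkeeping calculation.
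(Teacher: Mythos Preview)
Your argument is correct and follows essentially the same three-step strategy as the paper: pass to a level-$\ell$ cover with $\ell\ge 3$, apply Riemann--Roch there to extract the leading term $\lambda_1^{d}/d!$, and descend to $\Gamma_g$ via a fixed-point argument (what you do with Molien is exactly the paper's ``Noether--Lefschetz fixed-point formula'' step, and both identify $-\mathbf 1$ as the only nontrivial element contributing to top order). One small remark: your opening paragraph invoking Serre vanishing on $\Sat$ and then silently passing to $\chi(\STor,p^*L^{\otimes k})$ is not quite justified as written (you would need rational singularities of $\Sat$ or an asymptotic comparison), but it is also unnecessary---once you switch to the level cover $\STor(\ell)$, the line bundle $\det\tilde\EE$ is big and nef and asymptotic Riemann--Roch already gives $h^0\sim\chi\sim(\lambda_1^d/d!)k^d$, which is all you use.
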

\begin{proof}
The proof consists of several steps. The first is to go to a level-$\ell$ cover and apply Riemann--Roch to the line bundle $(\det \tilde{\EE})^{\otimes k}$ as the modular forms of weight $k$ with respect to the principal congruence subgroup 
$\Gamma_g(\ell)$ are just the sections of this line bundle.
The second step is to prove that this line bundle has no higher cohomology. Consequently, the Riemann--Roch expression for $(\det \tilde{\EE})^{\otimes k}$ gives the dimension of the space of sections, and the leading term (as $k$ grows) is
determined by the self-intersection number $\lambda_1^{\frac12g(g+1)}$ on $\STor(\ell)$. This shows that
\begin{equation*}
 \dim M_k (\Gamma_g(\ell)) \sim 2^{-\frac12g(g+1)-g} \ k^{\frac12g(g+1)} \; [\Gamma_g(\ell): \Gamma_g] \  V_g \; \pi^{-\frac12g(g+1)}
 \end{equation*}
 where $V_g$ is Siegel's volume
 \begin{equation*}
 V_g=2^{g^2+1}\ \pi^{\frac12g(g+1)} \; \prod_{j=1}^g\frac{(j-1)!}{(2j)!}(-1)^{j-1}B_{2j}.
 \end{equation*}
 The third step is to descend to $\ab$ by applying the Noether--Lefschetz fixed-point formula. It turns out that this does not affect the leading term, with the exception of cancelling the index $[\Gamma_g(\ell): \Gamma_g]$. 
\end{proof}
This was used by Tai \cite{tai} in his proof that $\ab$ is of general type for $g\geq 9$. The same principle can be applied to compute the growth behaviour of the space of modular forms or cusp forms also in the 
non-principally polarized case, see e.g. \cite[Sect.~II.2]{husageometry}. Indeed, Hirzebruch--Mumford proportionality can also be used to study  other homogeneous domains, for example orthogonal modular varieties, see \cite{GHSorth1}, \cite{GHSorth2}.

\section{$L^2$ cohomology and Zucker's conjecture}\label{sec:L2cohomology}
\label{s:Ltwocohomology}

In the 1970's and 1980's great efforts were made to understand the cohomology of locally symmetric domains.  
In the course of this various cohomology theories were studied, notably {\em intersection cohomology}Ê and {\em $L^2$-cohomology}.
Here we will briefly recall some basic facts which will be of relevance for the discussions in the following sections.

One of the drawbacks of singular (co)homology is that Poincar\'e duality fails for singular spaces.   
It was one of the main objectives of Goresky and MacPherson 
to remedy this situation when they introduced intersection cohomology. Given a  space $X$ of real dimension $m$,
one of  the starting points of intersection theory is the choice of 
a good stratification 
\begin{equation*}
 X=X_m \supset X_{m-1} \supset \cdots \supset X_1 \supset X_0
\end{equation*}
by closed subsets $X_i$ such that each point $x \in X_i \setminus X_{i-1}$ has a neighbourhood $N_x$ which is again suitably stratified and whose homeomorphism type does not depend on $x$.
The usual singular $k$-chains are then replaced by chains which intersect each stratum $X_{m-i}$ in a set of dimension at most $k-i+p(k)$ where $p(k)$ is the perversity. This leads to 
the intersection homology groups $IH_k(X,\QQ)$ and dually to intersection cohomology  $IH^k(X,\QQ)$.   We  will restrict ourselves here mostly to (complex) algebraic 
varieties where  the strata $X_i$ have real dimension $2i$,  and we will work with  the middle perversity, which means that $p(k)=k-1$. Intersection  cohomology  not 
only satisfies Poincar\'e duality, but it also has many other good properties, notably we have a Lefschetz theorem and a K\"ahler package, including a Hodge decomposition. 
In case of a smooth manifold, or, more generally, a variety with locally quotient singularities, intersection (co)homology and singular (co)homology coincide.
The drawback is that 
intersection cohomology loses some of its functorial properties (unless one restricts to stratified maps) and that it is typically hard to compute it from first principles.   
Deligne later gave a sheaf-theoretic construction which is particularly suited to algebraic varieties. The main point is the construction of an intersection cohomology complex $\calI\calC_X$  whose 
cohomology gives $IH^{\bullet}(X,\QQ)$. Finally we mention the {\em decomposition theorem} which for a projective morphism $f: Y \to X$ relates the intersection cohomology of $X$ with that of $Y$.
For an introduction to intersection cohomology we refer the reader to \cite{KirwanWoolf}. An excellent exposition of the decomposition theorem can be found in \cite{deCataldoMigliorini}.

Although we are here primarily interested in $\ab$ and its compactifications, much of the technology employed here is not special to this case, but applies more generally to Hermitian symmetric domains
and hence we will now move our discussion into this more general setting.
Let $G$ be a connected reductive group which, for simplicity, we assume to be semi-simple, and let $K$ be a maximal compact subgroup. We also assume that $D=G/K$ carries a $G$-equivariant complex structure 
in which case we speak of an {\em Hermitian symmetric space}.  The prime example we have in mind is $G=\Sp(2g,\RR)$ and $K=U(g)$ in which case $G/K=\HH_g$. If $\Gamma \subset G(\QQ)$ is an arithmetic 
subgroup we consider the quotient
\begin{equation*}
X=\Gamma \backslash G/K
\end{equation*}
which is called a {\em locally symmetric} space.
In our example, namely for  $\Gamma=\Sp(2g,\ZZ)$, we obtain 
\begin{equation*}
\ab=\Sp(2g,\ZZ)\backslash \Sp(2g,\RR)/U(g)= Sp(2g,\ZZ)\backslash \HH_g.
\end{equation*}
As in the Siegel case,  we also have several compactifications in this more general setting. 
The first is the {\em Baily--Borel compactification} $\XBB$ which for $\ab$  is nothing but the Satake compactification $\Sat$. As in the Siegel case  it can be defined as the proj
of a graded ring of automorphic forms, which gives it the structure of a projective variety. Again as in the Siegel case, one can define toroidal compactifications $\XTor$  which are compact normal analytic spaces.
Moreover, there are two further topological compactifications, namely the {\em Borel--Serre} compactification $\XBS$ and the  {\em reductive Borel--Serre} compactification $\XRBS$ \cite{BorelSerre}.  These are topological 
spaces which do not carry an analytic structure. The space $\XBS$ is a manifold with corners that is homotopy equivalent to $X$, whereas $\XRBS$ is typically a very singular stratified space. More details on their construction and properties can be found in \cite{BorelJi}.
These spaces are related by maps
\begin{equation*} 
\XBS \to \XRBS \to \XBB  \leftarrow \XTor
\end{equation*}
where the maps on the left hand side of $\XBB$ are continuous maps and the map on the right hand side is an analytic map. All of these spaces have natural stratifications which are suitable for 
intersection cohomology. For a survey on this topic we refer the reader to \cite{Goreskysurvey} which we follow closely in parts. 

Another important cohomology theory is $L^2$-cohomology. For this one considers the space of square-integrable differential forms 
\begin{equation*}
\Omega_{(2)}^i(X)= \left\{ \omega \in \Omega^i_X \mid \int \omega \wedge *\omega  < \infty,   \int d\omega \wedge *d\omega  < \infty \right\}.
\end{equation*}
This defines the $L^2$-cohomology groups
\begin{equation*}
H_{(2)}^i(X)= \op{ker} d / \op{im} d.
\end{equation*}
These cohomology groups are representation theoretic objects and can be expressed in terms of relative group cohomology as follows, see \cite[Theorem 3.5]{borel-regularization}:
\begin{equation}\label{equ:representation}
H_{(2)}^i(X) =H^i(\mathfrak g, K;  L^2(\Gamma \backslash G)^\infty)
\end{equation}
where $ L^2(\Gamma \backslash G)^\infty$ is the module of $L^2$-functions on $\Gamma \backslash G$ such that all derivatives by $G$-invariant differential operators are square integrable. Indeed, this isomorphism holds not only for cohomology with coefficients  in $\CC$ but more generally 
for cohomology with values in local systems. The famous Zucker conjecture says that the $L^2$-cohomology of $X$ and the intersection cohomology of $\XBB$ are naturally isomorphic. This was proven 
independently by  Looijenga \cite{LoijZucker} and Saper and Stern \cite{saperstern} in the late 1980's:
\begin{thm}[Zucker conjecture] 
There is a natural isomorphism
\begin{equation*}
H_{(2)}^i(X) \cong IH^i(\XBB,\CC) \mbox{ for all }i \geq 0.
\end{equation*}
\end{thm}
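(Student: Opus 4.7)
The plan is to follow the sheaf-theoretic strategy, matching $L^2$-cohomology against Deligne's axiomatic characterization of intersection cohomology with middle perversity. This is essentially the approach of Saper--Stern (Looijenga's method differs in that it proceeds via a toroidal resolution, comparing both sides through the decomposition theorem); I will describe the direct approach.

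First I would introduce on $\XBB$ the fine complex of sheaves $\calL^\pu_{(2)}$ whose sections over an open $U \subset \XBB$ are the smooth differential forms on $U \cap X$ which, together with their exterior differential, are $L^2$ with respect to the locally symmetric metric. Standard partition-of-unity arguments together with the de Rham theorem of Cheeger/Zucker give a canonical identification
\begin{equation*}
\mathbb{H}^i(\XBB,\calL^\pu_{(2)}) \;\cong\; H_{(2)}^i(X).
\end{equation*}
Moreover, $\calL^\pu_{(2)}$ restricts on the open stratum $X$ to a resolution of the constant sheaf $\CC_X$, since on a complete Riemannian manifold locally every smooth form is $L^2$. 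Hence the comparison reduces to verifying Deligne's axioms for $\calL^\pu_{(2)}$ on each boundary stratum: namely the support condition $\dim \op{supp} \calH^i(\calL^\pu_{(2)}) \leq \dim_\CC \XBB - i$ (for $i$ above the middle) and the dual cosupport condition. By Poincaré duality for $L^2$-cohomology on the smooth open part, the cosupport condition follows formally from the support condition, so I would concentrate on the latter.

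The substantive work is the local calculation at a boundary point $x$ of the Baily--Borel stratum $X^{BB}_P$ attached to a maximal rational parabolic $P$. Using the theory of Siegel sets, a basis of neighbourhoods of $x$ in $\XBB$ can be described as fibrations over a neighbourhood of $x$ in $X^{BB}_P$ whose fibres $F$ are horospherical pieces of $X$. By a Künneth/Leray spectral sequence argument the stalk cohomology $\calH^i(\calL^\pu_{(2)})_x$ can be computed from the $L^2$-cohomology of $F$, which via Borel--Casselman, van~Est, and Kostant's computation of $H^\pu(\mathfrak{n},\CC)$ is expressible in terms of Lie-algebra cohomology of the unipotent radical of $P$ with coefficients in unitary representations of the Levi. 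The central analytic input, and in my view the hard part, is to exhibit a Hodge-theoretic/Kähler identity near the boundary --- a quasi-isometry between the locally symmetric metric on $F$ and a product of hyperbolic-type cusp metrics --- so that the Hodge decomposition together with Kostant's branching lets one cleanly truncate away the non-middle-degree contributions.

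Once this local computation is in hand, one reads off directly that $\calH^i(\calL^\pu_{(2)})_x = 0$ for $i$ exceeding the middle-perversity bound at the stratum through $x$. This is exactly Deligne's support axiom; combined with Poincaré duality for $\calL^\pu_{(2)}$ (which descends from $L^2$-Hodge theory on $X$ and propagates through the stalk computation), one gets the cosupport axiom. By the uniqueness part of Deligne's axiomatic construction one concludes
\begin{equation*}
\calL^\pu_{(2)} \;\simeq\; \calI\calC_{\XBB}\otimes_\QQ \CC \quad\text{in the derived category},
\end{equation*}
and passage to hypercohomology yields the asserted isomorphism $H_{(2)}^i(X) \cong \icohloc{i}{\XBB}{\CC}$ for all $i$, with naturality following from the sheaf-theoretic construction. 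The main obstacle, I expect, is genuinely analytic: establishing that near each boundary stratum the $L^2$-condition implements precisely the weight truncation predicted by middle perversity, which amounts to a careful Hodge-theoretic analysis of harmonic forms on horospherical neighbourhoods of the parabolic $P$.
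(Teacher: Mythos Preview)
The paper does not supply a proof of this theorem: it is a survey, and the Zucker conjecture is simply stated with attribution to Looijenga and to Saper--Stern. So there is no ``paper's own proof'' to compare against; your proposal is to be measured against the original literature.

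Your outline follows the Saper--Stern strategy (sheafify the $L^2$-complex, verify Deligne's axioms stratum by stratum, reduce cosupport to support via $L^2$-Poincar\'e duality, and compute stalks through Lie-algebra cohomology of the nilradical). As a high-level sketch this is accurate. Two points of caution. First, your parenthetical description of Looijenga's method is off: his argument does not go through the decomposition theorem, but rather through a toroidal resolution together with an identification of the $L^2$-complex with a certain weight-truncated pushforward, exploiting the combinatorics of the toroidal boundary. Second, in the Saper--Stern argument the crux is not really a quasi-isometry with hyperbolic cusp metrics in any naive sense; the genuine work is a delicate combinatorial analysis of the weights arising from Kostant's theorem for $H^\bullet(\mathfrak n,V)$ against the growth conditions imposed by the $L^2$-norm along the split center of the Levi, carried out inductively over the parabolic stratification. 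Your phrase ``Hodge-theoretic/K\"ahler identity near the boundary'' gestures at the right phenomenon but understates what is actually required.
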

In 2001 Saper \cite{saper1}, \cite{saper2} established another isomorphism namely
\begin{thm} 
There is a natural isomorphism
\begin{equation*}
IH^i(\XRBS,\CC) \cong IH^i(\XBB,\CC) \mbox{ for all }i \geq 0.
\end{equation*}
\end{thm}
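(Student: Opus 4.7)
The plan is to study the natural continuous surjection $\pi\colon \XRBS \to \XBB$ (which exists because the reductive Borel--Serre compactification dominates every Satake-type compactification at the level of topological spaces) and to prove the sheaf-level statement
\[
R\pi_{*}\, \calI\calC_{\XRBS} \;\cong\; \calI\calC_{\XBB}
\]
in the derived category of constructible sheaves on $\XBB$. Taking hypercohomology then yields the desired isomorphism $IH^i(\XRBS,\CC)\cong IH^i(\XBB,\CC)$. The virtue of this reformulation is that the intersection cohomology sheaf on $\XBB$ is characterized (up to unique isomorphism) by the Deligne axioms: it restricts to $\CC_X$ on the open stratum and satisfies the support and cosupport conditions of the middle perversity stratum by stratum.

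Next I would set up the comparison of stratifications. The strata of $\XBB$ are indexed by conjugacy classes of \emph{maximal} rational parabolic subgroups of Hermitian type, and each such stratum is itself a locally symmetric variety for a smaller reductive group. The strata of $\XRBS$ are indexed by conjugacy classes of \emph{all} rational parabolic subgroups $P\subset G$, and the $P$-stratum is the locally symmetric space for the reductive Levi quotient $L_P$. The map $\pi$ sends the RBS-stratum attached to $P$ to the BB-stratum attached to the unique Hermitian maximal parabolic $Q\supset P$ (when one exists), and it collapses the nilpotent unipotent directions. Over a point of a BB-stratum, the fibre of $\pi$ is a stratified space built from nilmanifolds over locally symmetric spaces for various Levi factors.

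With this picture in place, the verification of $R\pi_{*}\calI\calC_{\XRBS}\cong\calI\calC_{\XBB}$ reduces to a local calculation: at a point of each boundary stratum of $\XBB$, the costalk of $R\pi_{*}\calI\calC_{\XRBS}$ must satisfy precisely the vanishing range required by the middle-perversity truncation. This is the technical heart of the matter. Saper's tool for organizing such a calculation is his formalism of \emph{$\mathcal L$-modules}: a category whose objects are assemblies of representations of Levi factors indexed by the parabolic poset, equipped with truncation functors mirroring the perverse truncations. In this framework the intersection complex on $\XBB$ and the pushforward of the intersection complex on $\XRBS$ each admit explicit $\mathcal L$-module realizations, and the comparison becomes an inductive statement on the parabolic rank.

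The main obstacle, in my view, is precisely this local/inductive step: showing that the arithmetic cohomology of the nilmanifold-and-locally-symmetric fibres of $\pi$ fits exactly into the perverse truncation windows. The relevant input combines Kostant's theorem on $\mathfrak n$-cohomology of standard representations, the reduction theory for arithmetic groups, and a delicate analysis of which cohomology classes survive truncation at each step. Once this vanishing range is established, the Deligne uniqueness theorem for $\calI\calC_{\XBB}$ closes the argument and yields the claimed isomorphism for every $i\geq 0$.
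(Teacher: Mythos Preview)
The paper does not supply a proof of this statement: it is quoted as a theorem of Saper, with references \cite{saper1}, \cite{saper2}, and no argument is given beyond that citation. So there is no ``paper's own proof'' to compare against.

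That said, your outline is a fair high-level account of the strategy behind Saper's work. You correctly identify the target as a sheaf-level isomorphism $R\pi_*\,\calI\calC_{\XRBS}\cong\calI\calC_{\XBB}$ to be checked via the Deligne axioms, you correctly name Saper's $\mathcal L$-module formalism as the organizing tool, and you correctly flag the local truncation/vanishing computation over each boundary stratum as the technical heart. One small inaccuracy: the correspondence between strata of $\XRBS$ and $\XBB$ under $\pi$ is not quite ``the unique Hermitian maximal parabolic $Q\supset P$''; the combinatorics involves the specific structure of rational boundary components in the Hermitian case and the associated parabolics, and is more intricate than a containment relation.

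The more substantive point is that what you have written is an outline, not a proof. All of the content of Saper's theorem lives in the step you call ``the main obstacle'': establishing the precise vanishing ranges for the cohomology of the fibres so that the pushforward satisfies the middle-perversity support and cosupport conditions. You gesture at the ingredients (Kostant's theorem on $\mathfrak n$-cohomology, reduction theory, inductive analysis over the parabolic poset) but do not carry any of them out. This is entirely reasonable given that Saper's argument occupies two substantial papers, but it means your proposal is best read as a correct roadmap to the literature rather than an independent proof.
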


We conclude this section with two results concerning  specifically the case of abelian varieties as they will be relevant for the discussions in the next sections. 
The following theorem is part of Borel's work on the stable cohomology of $\ab$, see \cite[Remark 3.8]{borel2}: 
\begin{thm} \label{teo:L2IHSat}
There is a natural isomorphism
\begin{equation*}
H_{(2)}^k(\ab)  \cong H^k(\ab,\CC) \mbox{ for all } k<g.
\end{equation*}
\end{thm}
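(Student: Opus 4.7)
The plan is to realise both cohomologies as relative Lie algebra cohomology and compare them via Borel's stability theorem. Throughout, let $G=\Sp(2g,\RR)$, $K=\U(g)$, $\Gamma=\Gamma_g$, so $\ab=\Gamma\backslash G/K$, and write $\mathfrak g=\Lie G$. Because $\HH_g$ is a complete Riemannian manifold, $H_{(2)}^\bullet(\ab)$ can be computed using the complex of smooth square-integrable forms, and the inclusion of this complex into the full de Rham complex induces a natural comparison map
\begin{equation*}
c^k\colon H_{(2)}^k(\ab)\longrightarrow H^k(\ab,\CC),
\end{equation*}
which is the candidate for the isomorphism to be constructed.

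Next, I would apply (\ref{equ:representation}) to obtain $H_{(2)}^k(\ab)=H^k(\mathfrak g,K;L^2(\Gamma\backslash G)^\infty)$. Since $\Gamma$ has finite covolume in $G$, the constants $\CC\subset L^2(\Gamma\backslash G)$ form a closed $G$-invariant subspace, yielding a direct sum decomposition $L^2(\Gamma\backslash G)=\CC\oplus L^2_0$ and a corresponding splitting
\begin{equation*}
H_{(2)}^k(\ab) = H^k(\mathfrak g,K;\CC)\oplus H^k\bigl(\mathfrak g,K;(L^2_0)^\infty\bigr).
\end{equation*}
Composing the inclusion of the first summand with $c^k$ recovers the classical Borel regulator $j^k\colon H^k(\mathfrak g,K;\CC)\to H^k(\ab,\CC)$ obtained by Chern--Weil theory applied to invariant forms on $\HH_g$. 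Borel's stability theorem guarantees that $j^k$ is an isomorphism in the range $k<g$, so $c^k$ is automatically surjective in the stable range and the issue is reduced to injectivity.

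For injectivity it suffices to establish the vanishing
\begin{equation*}
H^k\bigl(\mathfrak g,K;(L^2_0)^\infty\bigr)=0\quad\text{for }k<g.
\end{equation*}
This is a purely representation-theoretic statement. By the Vogan--Zuckerman classification, every irreducible unitary $(\mathfrak g,K)$-module with nontrivial relative Lie algebra cohomology is of the form $A_{\mathfrak q}(\lambda)$ for some $\theta$-stable parabolic $\mathfrak q$, and its cohomology is concentrated in degrees $\geq R(\mathfrak q)$. For $G=\Sp(2g,\RR)$ and any proper $\theta$-stable $\mathfrak q$, one verifies that $R(\mathfrak q)\geq g$. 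Combined with the description of the continuous and residual part of $L^2(\Gamma\backslash G)$ through Eisenstein series (which, following Borel and Garland, contribute no relative Lie algebra cohomology in the stable range), the required vanishing follows.

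The main obstacle is the last step: one needs either a direct computation of the minimal cohomological degree of the nontrivial $A_{\mathfrak q}(\lambda)$ modules for $\Sp(2g,\RR)$, or a careful analysis of the Eisenstein contribution to $(L^2_0)^\infty$. The cleanest strategy, following Borel \cite{borel2}, sidesteps an explicit description of the spectrum and instead uses uniform lower bounds on $R(\mathfrak q)$ together with vanishing theorems for $(\mathfrak g,K)$-cohomology of the continuous spectrum; the bound $R(\mathfrak q)\geq g$ is precisely what produces the stable range $k<g$ appearing in the statement.
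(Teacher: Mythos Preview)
Your approach is essentially Borel's original argument from \cite{borel2}, which the paper cites as the source of the theorem. However, the paper does not reproduce that argument; instead it offers a much shorter alternative proof in the remark immediately following the statement. The paper's argument is: by Zucker's conjecture (just proved in the text) one has $H_{(2)}^k(\ab)\cong IH^k(\Sat,\CC)$; since $\ab$ is rationally smooth, $H^k(\ab,\CC)=IH^k(\ab,\CC)$; and since the boundary $\Sat\smallsetminus\ab$ has complex codimension $g$, the restriction map $IH^k(\Sat,\CC)\to IH^k(\ab,\CC)$ is an isomorphism for $k<g$ by the standard support/cosupport conditions for intersection cohomology. That is the entire proof, and it uses neither Borel's stability theorem nor any analysis of the automorphic spectrum.

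Your route is not wrong, but it is considerably heavier and your sketch has genuine gaps. First, invoking Borel's stability theorem (Theorem~\ref{thm:stable}) to identify $H^k(\mathfrak g,K;\CC)$ with $H^k(\ab,\CC)$ is historically circular: both statements are established simultaneously in \cite{borel2}, so you are not giving an independent proof. Second, the vanishing of $H^k(\mathfrak g,K;(L^2_0)^\infty)$ for $k<g$ requires more than the Vogan--Zuckerman bound $R(\mathfrak q)\geq g$ on the discrete spectrum (which is correct, and is in fact computed later in the paper in the proof of Theorem~\ref{thm:sharper_stab_IH_Sat_level_one}); you also need to control the continuous spectrum, and the sentence invoking ``Borel and Garland'' is a placeholder, not an argument. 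The actual treatment of the continuous part in \cite{borel2} is delicate and is precisely what makes Borel's original proof nontrivial. By contrast, the Zucker-conjecture route packages all of this analytic difficulty into a theorem already stated in the paper and reduces the remaining step to a one-line codimension observation.
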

Let us remark that one can also view this theorem as a consequence of Zucker's conjecture, since $\coh[k]{\ab}=\icoh[k]{\ab}$ and $\icoh[k]{\XBB}$ coincide in degree $k<g$ as a consequence of the fact that the codimension of the boundary of $X$ in $\XBB$ is $g$. 

Finally we notice  the following connection between the tautological ring $R_g$ and intersection cohomology of $\Sat$, see also  \cite{HGIH}:
\begin{prop}\label{prop:tautinih}
There is a natural inclusion $R_g \hookrightarrow IH^{\bullet}(\Sat,\QQ)$ of graded vector spaces of the tautological ring into the intersection cohomology of the Satake compactification $\Sat$.
\end{prop}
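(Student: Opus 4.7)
The idea is to use a smooth toroidal compactification $p\colon \STor \to \Sat$ as an intermediary, exploiting the decomposition theorem together with the natural $\coh[\bullet]{\Sat}$-module structure on $\icoh[\bullet]{\Sat}$.

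Since $p$ is proper and $\STor$ is stack smooth (so that $\coh[\bullet]{\STor}=\icoh[\bullet]{\STor}$), the decomposition theorem exhibits $\calI\calC_\Sat$ as a direct summand of $Rp_{*}\QQ_{\STor}$, yielding a split injection
\[
p^*\colon\icoh[\bullet]{\Sat}\hookrightarrow\coh[\bullet]{\STor}.
\]
Moreover, by the Charney--Lee result recalled in Section~\ref{sec:proportionality}, there exist rational lifts $\lambda_i^{\mathrm{CL}}\in H^{2i}(\Sat,\QQ)$ of the $\lambda$-classes, i.e.\ with $p^*\lambda_i^{\mathrm{CL}}=\lambda_i$. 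Let $R_g^{\mathrm{Sat}}\subset\coh[\bullet]{\Sat}$ denote the graded subring they generate; by Theorem~\ref{thm:tautring}, pullback gives $p^*(R_g^{\mathrm{Sat}})=R_g\subset\coh[\bullet]{\STor}$.

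The key construction uses the cap product action of $\coh[\bullet]{\Sat}$ on $\icoh[\bullet]{\Sat}$, which gives a linear map
\[
\phi\colon\coh[\bullet]{\Sat}\longrightarrow\icoh[\bullet]{\Sat},\qquad \xi\mapsto\xi\cdot 1_{IH},
\]
where $1_{IH}\in IH^0(\Sat,\QQ)$ is the canonical generator. Since $\STor$ is smooth (so the cap product on $\icoh[\bullet]{\STor}=\coh[\bullet]{\STor}$ is the ordinary cup product) and $p^*$ is compatible with cap products, one obtains the key identity $p^*\circ\phi=p^*$ as maps $\coh[\bullet]{\Sat}\to\coh[\bullet]{\STor}$. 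Applying this to $R_g^{\mathrm{Sat}}$ gives $p^*(\phi(R_g^{\mathrm{Sat}}))=p^*(R_g^{\mathrm{Sat}})=R_g$, and since $p^*|_{\icoh[\bullet]{\Sat}}$ is injective, the restriction $p^*\colon\phi(R_g^{\mathrm{Sat}})\to R_g$ is bijective; its inverse provides the sought graded injection $R_g\hookrightarrow\icoh[\bullet]{\Sat}$.

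The principal technical obstacle is verifying the cap-product compatibility and functoriality of $\phi$ under $p^*$ in the stack-smooth setting; these are standard facts from the theory of perverse sheaves but deserve care. A minor caveat is that the resulting embedding depends on the choice of Charney--Lee lifts and is therefore natural rather than strictly canonical. Alternatively, one could bypass $\phi$ entirely by working with the $L^2$-cohomology model via Zucker's conjecture (Section~\ref{sec:L2cohomology}) and representing the $\lambda$-classes by invariant, hence $L^2$, forms on $\ab$ using Borel's results (e.g.\ Theorem~\ref{teo:L2IHSat}), but ensuring rationality of the resulting intersection cohomology classes would again require an ingredient such as Charney--Lee.
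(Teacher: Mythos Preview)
Your proof is correct and follows essentially the same route as the paper's: the natural map $H^{\bullet}(\Sat)\to IH^{\bullet}(\Sat)$ (your $\phi$) sends lifts of the $\lambda_i$ to intersection cohomology, and the decomposition theorem embedding $IH^{\bullet}(\Sat)\hookrightarrow H^{\bullet}(\STor)$ identifies their image with the tautological ring $R_g$ in $H^{\bullet}(\STor)$, forcing injectivity. The paper phrases the injectivity step as ``perfect pairing on $R_g$ means no further relations,'' while you use that $p^*|_{IH}$ is injective combined with $p^*\circ\phi=p^*$; these are the same mechanism. Your closing remark about the $L^2$/representation-theoretic alternative (contribution of the trivial representation) matches the paper's second proof verbatim.
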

 \begin{proof}  
By the natural map from cohomology to intersection cohomology we can interpret the (extended) classes $\lambda_i$ on $\Sat$ 
as classes in $IH^{2i}(\Sat)$.  Via the decomposition theorem 
we have an embedding  $IH^{2i}(\Sat) \subset H^{2i}(\STor)$ where $\STor$ is a (stack) smooth toroidal compactification.  Since the classes $\lambda_i$ 
satisfy the relation of Theorem \ref{thm:relationChowcoho} we obtain a map from $R_g$ to  $IH^{\bullet}(\Sat,\QQ)$.
Since moreover the intersection pairing defines a perfect pairing on $R_g$ there can be no further relations among the classes $\lambda_i \in IH^{2i}(\Sat)$ and hence we have an embedding 
$R_g \hookrightarrow IH^{\bullet}(\Sat,\QQ)$. 

Alternatively, one can see this from the isomorphism \eqref{equ:representation} by looking at the decomposition of $H_{(2)}^i(X)$ induced by the decomposition of $L^2(\Gamma \backslash G)$ into $G$-representations. Then it is known that the trivial representation occurs with multiplicity one in $L^2(\Gamma \backslash G)$ and that its contribution coincides with $R_g$.
\end{proof}

 \begin{rem}
   We mentioned earlier as a motivation for the introduction of the tautological ring that it is contained in the cohomology of all smooth toroidal compactifications $\STor$ of $\ab$.
   Proposition~\ref{prop:tautinih} provides an explanation for this.
        Applying the decomposition theorem to the canonical map $\STor \to \Sat$ we find that 
$H^{\bullet}(\STor,\QQ)$ contains 
$IH^{\bullet}(\Sat,\QQ)$ as a subspace, which itself  contains the tautological ring $R_g$.
\end{rem}

\section{Computations in small genus}
\label{sec:smallgenus}

In this section, we consider a basic  topological invariant of $\ab$ and its compactifications, namely the  cohomology with $\QQ$-coefficients. As we work over the field of complex numbers, the cohomology groups will carry mixed Hodge structures (i.e. a Hodge and a weight filtration). We will describe the mixed Hodge structures whenever this is possible because of their geometric significance. In particular, we will denote by $\QQ(k)$ the pure Hodge structure of Tate of weight $-2k$.
If $H$ is a mixed Hodge structure, we will denote its Tate twists by $H(k):= H\otimes \QQ(k)$. We will also denote any extension
$$0\rightarrow B \rightarrow H \rightarrow A \rightarrow 0$$
of a pure Hodge structure $A$ by a pure Hodge structure $B$ by $H=A+B$.

Although its geometric and algebraic importance is obvious, the cohomology ring $\coh{\ab}$ is completely known in only surprisingly few cases. The cases $g=0,1$ are of course trivial.
The cases $g=2,3$ are special in that the  locus of jacobians is dense in $\ab$ in these genera. This can be used to obtain information on the cohomology ring $\coh{\ab}$ from the known descriptions of $\coh{\M g}$ for these values of $g$. For $g=2$  the Torelli map actually extends to an isomorphism from the Deligne--Mumford compactification $\Mb 2$ to the
(in this case canonical)  toroidal compactification of $\ab[2]$. This map identifies $\ab[2]$ with the locus of stable curves of compact type and from this one can easily obtain that the cohomology ring is isomorphic to the tautological ring in this case.

In general, however, the cohomology ring of $\ab$ is larger than the tautological ring. This is already the case for $g=3$. In \cite{hain}, Richard Hain computed the rational cohomology ring of $\ab[3]$ using techniques from Goresky and MacPherson's stratified Morse theory. His result is the following:

\begin{thm}[{\cite{hain}}]
The rational cohomology ring of $\ab[3]$ is isomorphic to 
$\QQ[\lambda_1])/(\lambda_1^4)$ in degree $k\neq 6$.
In degree $6$ it is given by a $2$-dimensional mixed Hodge structure which is an extension of the form $\QQ(-6)+\QQ(-3)$.
  \end{thm}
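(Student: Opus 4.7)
The plan is to combine Looijenga's computation of $\coh{\M{3}}$ with a Gysin decomposition of $\ab[3]$ along the locus of decomposable principally polarized abelian varieties. By the Torelli theorem in genus $3$, the Torelli map $t:\M{3}\to\ab[3]$ is injective on coarse moduli, and its image is precisely the complement of the closed locus $D\subset\ab[3]$ of products: $D$ is, up to automorphism gerbes, the image of $\ab[1]\times\ab[2]\to\ab[3]$ and has codimension $2$, with the totally decomposable stratum $\op{Sym}^3(\ab[1])$ as its codimension-$3$ substratum. Writing $U:=\ab[3]\setminus D$, the map $t$ induces a $\QQ$-cohomology isomorphism $\coh{U}\cong\coh{\M{3}}$ because the hyperelliptic involution acts trivially on the Jacobian as a polarized abelian variety and the resulting $\ZZ/2$-gerbe along the hyperelliptic locus is invisible rationally.

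I would next invoke Looijenga's theorem, which gives $\coh[0]{\M{3}}=\QQ$, $\coh[2]{\M{3}}=\QQ(-1)$, $\coh[6]{\M{3}}=\QQ(-6)$ and zero in all other degrees; and compute $\coh{D}$ by K\"unneth from $\coh{\ab[1]}=\QQ$ (concentrated in degree $0$) and $\coh{\ab[2]}=\QQ[\lambda_1]/(\lambda_1^2)$ (the latter being isomorphic to its tautological ring, as noted just above the statement). This yields $\QQ$ in degree $0$ and $\QQ(-1)$ in degree $2$ of $\coh{D}$, with the deeper stratum $\op{Sym}^3(\ab[1])$ contributing nothing in the relevant range.

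The main computation is then the Gysin long exact sequence for the codimension-$2$ closed inclusion $D\hookrightarrow\ab[3]$,
\begin{equation*}
\cdots\to\coh[k-4]{D}(-2)\to\coh[k]{\ab[3]}\to\coh[k]{U}\to\coh[k-3]{D}(-2)\to\cdots,
\end{equation*}
analysed degree by degree. For $k\neq 6$, at most one non-trivial term contributes on each side, so the tautological classes $1,\lambda_1,\lambda_1^2,\lambda_1^3$ account for all of $\coh[k]{\ab[3]}$ and give the ring $\QQ[\lambda_1]/(\lambda_1^4)$, the relation $\lambda_1^4=0$ holding because the tautological ring of $\ab[3]$ is $R_2$, whose socle sits in degree $6$. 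In degree $6$, the vanishings $\coh[5]{U}=0$ and $\coh[3]{D}=0$ reduce the sequence to
\begin{equation*}
0\to\coh[2]{D}(-2)\to\coh[6]{\ab[3]}\to\coh[6]{U}\to 0,
\end{equation*}
which is the asserted extension $0\to\QQ(-3)\to\coh[6]{\ab[3]}\to\QQ(-6)\to 0$, with $\lambda_1^3$ generating the sub.

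The main obstacle lies in two imported inputs. First, Looijenga's identification of the weight-$12$ class in $\coh[6]{\M{3}}$ -- a non-tautological class detected by Hodge-theoretic methods -- is the genuinely deep ingredient; without it the $\QQ(-6)$ quotient would never appear, and Hain's stratified Morse-theoretic framework is precisely the vehicle that makes this class geometrically visible in $\coh[6]{\ab[3]}$. Second, the stacky subtleties in the map $\ab[1]\times\ab[2]\to\ab[3]$ and in the $\ZZ/2$-action along the hyperelliptic locus must be treated carefully so that Gysin formulas with $\QQ$-coefficients apply as stated; stratified Morse theory handles all strata and their normal-bundle/link data uniformly, which is the main technical advantage it offers over a naive long-exact-sequence approach.
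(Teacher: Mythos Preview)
The paper does not give its own proof of this statement; it is cited from Hain's paper, and the only information the paper supplies about the method is that Hain ``computed the rational cohomology ring of $\ab[3]$ using techniques from Goresky and MacPherson's stratified Morse theory.'' Your proposal is therefore not a reconstruction of the paper's argument but an alternative route, and as such it is essentially sound.

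Your approach stratifies $\ab[3]$ internally by the Torelli locus and the decomposable locus, imports Looijenga's computation of $\coh{\M3}$ (including the crucial fact that $\coh[6]{\M3}=\QQ(-6)$), and runs a Gysin-type long exact sequence. Hain, by contrast, works with the stratification of $\Sat[3]$ by the $\ab[k]$ for $k\leq 3$ and analyses the links of the boundary strata via stratified Morse theory. Your method is more economical once Looijenga's result is in hand; Hain's has the advantage of simultaneously yielding $\coh{\Sat[3]}$ and being self-contained with respect to the moduli-of-curves input. Both must confront the same weight-$12$ phenomenon in degree $6$.

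Two small points deserve care. First, your description of the $\ZZ/2$-gerbe is inverted: the Torelli map $\M3\to\ab[3]$ is a $\ZZ/2$-gerbe over its image on the \emph{non}-hyperelliptic locus (where $-1$ is an extra automorphism of the Jacobian not coming from the curve) and an isomorphism of stacks over the hyperelliptic locus (where the hyperelliptic involution accounts for $-1$). This does not affect your rational-cohomology conclusion. Second, the Gysin sequence you write presupposes that $D$ is smooth of pure codimension $2$, but $D$ is singular along $\op{Sym}^3(\ab[1])$; the clean way around this is to compute $\cohc{\ab[3]}$ via the three-step stratification $U\sqcup(\ab[1]\times\ab[2]^{\circ})\sqcup\op{Sym}^3(\ab[1])$ and then apply Poincar\'e duality, rather than invoking a single Gysin sequence for the possibly singular $D$. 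You acknowledge this issue, and once handled in that way the degree-by-degree analysis goes through as you describe.
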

 
Let us remark that the class of the extension in $\coh[6]{\ab[3]}$ is unknown. Hain expects it to be given by a (possibly trivial) multiple of $\zeta(3)$. 

For genus up to three, also the cohomology of all compactifications we mentioned in the previous sections is known. For the Satake compactification, this result is due to Hain in \cite[Prop. 2 \& 3]{hain}:

\begin{thm}[\cite{hain}]
 The following holds:
  \begin{itemize}
    \item[(i)]
The rational cohomology ring of $\Sat[2]$ is isomorphic to 
$\QQ[\lambda_1])/(\lambda_1^4)$.
\item[(ii)]
The rational cohomology ring of $\Sat[3]$ is isomorphic to 
$\QQ[\lambda_1])/(\lambda_1^7)$ in degree $k\neq 6$. In degree $6$ it is given by a $3$-dimensional mixed Hodge structure which is an extension of the form $\QQ(-3)^{\oplus 2}+\QQ$.
\end{itemize}
  \end{thm}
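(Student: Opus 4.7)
The plan is to compute $\coh{\Sat[g]}$ for $g=2,3$ inductively via the Baily--Borel stratification $\Sat[g] = \ab[g] \sqcup \Sat[g-1]$ and the resulting long exact sequence of mixed Hodge structures
\[
\cdots \to \cohc[k]{\ab[g]} \to \coh[k]{\Sat[g]} \to \coh[k]{\Sat[g-1]} \to \cohc[k+1]{\ab[g]} \to \cdots,
\]
starting from the base case $\Sat[1] \cong \PP^1$.

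For part (i), the key input is that $\coh{\ab[2]} \cong R_1 \cong \QQ[\lambda_1]/(\lambda_1^2)$ (one copy of $\QQ$ in degree $0$ and one copy of $\QQ(-1)$ in degree $2$), which follows from the extension of the Torelli map to the toroidal compactification and the known cohomology of $\M{2}$, as discussed in the paragraph preceding the theorem. Orbifold Poincar\'e duality then gives $\cohc[k]{\ab[2]} \cong \coh[6-k]{\ab[2]}^\vee(-3)$, nonzero only in degrees $4$ and $6$ with values $\QQ(-2)$ and $\QQ(-3)$. Plugging this in together with $\coh{\PP^1}$, every connecting map vanishes for weight reasons since the adjacent pieces are pure Tate of distinct weights, and I read off $\coh[2i]{\Sat[2]} = \QQ(-i)$ for $i=0,1,2,3$ and odd cohomology zero. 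To upgrade this to a ring isomorphism with $\QQ[\lambda_1]/(\lambda_1^4)$, I would use the Charney--Lee lift $\lambda_1 \in \coh[2]{\Sat[2]}$ together with Hirzebruch--Mumford proportionality: pulling back along any smooth toroidal resolution $\STor[2] \to \Sat[2]$ shows $\lambda_1^3 \neq 0$, which combined with the dimension count forces each power $\lambda_1^i$ to span the degree-$2i$ component.

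For part (ii), I would feed Hain's description of $\coh{\ab[3]}$ and the result of (i) into the same long exact sequence. Poincar\'e duality applied to the extension $\coh[6]{\ab[3]} = \QQ(-6)+\QQ(-3)$ yields $\cohc[6]{\ab[3]} = \QQ(-3)+\QQ$. Outside degree $6$ the sequence is again determined by weight considerations and produces $\QQ(-i)$ in degree $2i$ for $i \in \{0,1,2,4,5,6\}$ and zero in odd degrees. In degree $6$ one obtains the short exact sequence of mixed Hodge structures
\[
0 \to \QQ(-3)+\QQ \to \coh[6]{\Sat[3]} \to \coh[6]{\Sat[2]} = \QQ(-3) \to 0,
\]
whose associated graded of the weight filtration is pure Tate, with $\QQ$ in weight $0$ and $\QQ(-3)^{\oplus 2}$ in weight $6$ (the latter splits as any weight-$6$ Hodge--Tate structure does). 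This gives an extension of the form $\QQ(-3)^{\oplus 2}+\QQ$ in the notation fixed at the start of the section. The ring-theoretic identification with $\QQ[\lambda_1]/(\lambda_1^7)$ in degrees $\neq 6$ is again by the Charney--Lee lift and the non-vanishing $\lambda_1^6 \neq 0$ in $\coh[12]{\Sat[3]}$ by proportionality.

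The main obstacle is controlling the connecting homomorphisms and the weight structure in degree $6$ for $\Sat[3]$. Away from that single degree everything is mechanical, driven by the fact that only pure Tate pieces of distinct weights appear so that maps between them are forced to vanish. The genuine subtlety is that the extension class of the resulting three-dimensional mixed Hodge structure in $\coh[6]{\Sat[3]}$ between $\QQ$ and $\QQ(-3)^{\oplus 2}$ is not pinned down by this strategy; only its type is asserted in the theorem, paralleling the analogous indeterminacy in $\coh[6]{\ab[3]}$ that is conjecturally governed by $\zeta(3)$.
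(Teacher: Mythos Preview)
Your argument is correct and coincides with the alternative approach the paper describes immediately after the theorem: using the Gysin sequence attached to the stratification $\Sat=\bigsqcup_{k\leq g}\ab[k]$, which for $g=2,3$ degenerates to give $\coh{\Sat}\cong\bigoplus_{0\leq k\leq g}\cohc{\ab[k]}$. Your inductive long exact sequence for the pair $(\Sat,\Sat[g-1])$ is precisely this spectral sequence unwound one stratum at a time. By contrast, Hain's original proof in \cite{hain} proceeds differently: he first computes the cohomology of the \emph{link} of the boundary $\Sat[g-1]$ inside $\Sat$ and then runs a Mayer--Vietoris argument. The two routes are equivalent in this range, but the Gysin approach you chose is more mechanical and generalizes more readily to the $g=4$ computations later in the paper.

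Two small remarks on presentation. First, your appeal to ``weight reasons'' for the vanishing of the connecting homomorphisms is more than is needed: in every case one of the two adjacent groups is already zero for degree reasons, so the sequence splits into short exact pieces automatically. Second, calling the lift of $\lambda_1$ to $\Sat$ the ``Charney--Lee lift'' is slightly misleading, since their result concerns stable cohomology for large $g$; here $\lambda_1$ is simply $c_1$ of the ample line bundle $\det\EE$ that realizes $\Sat$ as a $\Proj$, and its top self-intersection is nonzero because the bundle is ample (or, as you say, by pulling back to $\Tor$ and invoking proportionality). Neither point affects the validity of your proof.
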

 
Hain's approach is based on first computing the cohomology of the link of $\Sat[g-1]$ in $\Sat$ for $g=2,3$ and then using a Mayer--Vietoris sequence to deduce from this the cohomology of $\Sat$. Alternatively, one could obtain the same result by looking at the natural stratification \eqref{eq:strata-satake} of $\Sat$ and calculating the cohomology using the Gysin spectral sequence for cohomology with compact support, which in this case degenerates at $E_2$ and yields
$$
\coh{\Sat}\cong\bigoplus_{0\leq k\leq g}\cohc{\ab[k]},\ \ \ g=2,3.
$$
Here $\cohc{\ab}$ denotes cohomology with compact support.
We recall that $\ab$ is rationally smooth, so that we can obtain $\cohc{\ab}$ from $\coh{\ab}$ by Poincar\'e duality.

For $g\leq 3$ the situation is easy for toroidal compactifications as well. Let us recall that the commonly considered toroidal compactifications all coincide in this range, so that we 
can talk about {\em the} toroidal compactification in genus $2$ and $3$. As mentioned above, the compactification $\Tor[2]$ can be interpreted as the moduli space $\Mb 2$ of stable genus $2$ curves, whose rational cohomology was computed by Mumford in \cite{mumfordtowards}. The cohomology of $\Tor[3]$ can be computed using the Gysin long exact sequence in cohomology with compact support associated with its toroidal stratification. Using this, we proved in \cite{huto} that the cohomology of $\Tor[3]$ is isomorphic to the Chow ring of $\Tor[3]$, which is known by \cite{vdgeerchowa3}. The results are summarized by the following two theorems.
\begin{thm}\label{teo:cohomologA2}
For the toroidal compactification $\Tor[2]$  the cycle map defines an isomorphism $CH_{\QQ}^{\bullet}(\Tor[2]) \cong  H^{\bullet}(\Tor[2], \QQ)$. There is no odd dimensional cohomology and the even Betti numbers are given by
  $$  \begin{array}{c|cccc}
    i&0&2&4&6\\\hline
    b_i&1&2&2& 1
    \end{array}
  $$
\end{thm}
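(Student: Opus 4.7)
The plan is to reduce to Mumford's explicit computation of the Chow ring and rational cohomology of $\Mb 2$ in \cite{mumfordtowards}, using the extended Torelli isomorphism $\Mb 2 \xrightarrow{\sim} \Tor[2]$ recalled in the preceding discussion. Under this isomorphism the interior $\ab[2]\subset \Tor[2]$ corresponds to the locus of stable genus $2$ curves of compact type, the unique toroidal boundary divisor corresponds to $\Delta_0\subset \Mb 2$ (irreducible one-nodal curves), and the classical boundary component $\Delta_1\subset \Mb 2$ maps onto the $\Sym^2(\ab[1])$-locus of decomposable ppav sitting inside $\ab[2]$. For $g=2$ the second Voronoi, perfect cone and central cone fans all coincide, so the compactification is canonical, and its only singularities are finite quotient singularities, so that Poincar\'e duality holds with rational coefficients.

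First I would invoke Mumford: $\CH^{\bullet}_{\QQ}(\Mb 2)$ is generated by the Hodge class $\lambda_1$ together with the boundary classes $\delta_0,\delta_1$, modulo explicit relations coming from Grothendieck--Riemann--Roch and from the analysis of the boundary strata. The resulting Chow groups have $\QQ$-dimensions $1,2,2,1$ in codimensions $0,1,2,3$.

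Next, to show that the cycle map is an isomorphism and that the odd cohomology vanishes, I would use the toroidal stratification of $\Tor[2]$ together with the Gysin spectral sequence in compactly supported cohomology. The open stratum $\ab[2]$ has cohomology $\QQ[\lambda_1]/(\lambda_1^2)$ by the results quoted above; the rank-$1$ boundary stratum is, up to a finite quotient, a $\CC^*$-bundle over the universal elliptic curve $\ua[1]\to\ab[1]$; and the deepest stratum is a finite quotient of a point. Each stratum therefore has cohomology concentrated in even degree, of Tate type, and generated by algebraic cycles. The spectral sequence then has $E_2$-page supported in even total degree, degenerates, and yields a cohomology ring of $\Tor[2]$ which is purely even and purely algebraic. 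This gives both the vanishing of odd cohomology and the surjectivity of the cycle map; combined with Mumford's description of the Chow ring, the isomorphism and the Betti numbers $1,2,2,1$ follow.

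The main technical obstacle is the bookkeeping of the boundary stratification and of the differentials in the Gysin spectral sequence. This is made tractable by the smallness of the stratification (only three strata in genus two) and by using Mumford's explicit Chow computation together with Poincar\'e duality as consistency checks on the output of the spectral sequence.
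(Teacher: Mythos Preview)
Your primary argument---transport the question to $\Mb 2$ via the extended Torelli isomorphism and cite Mumford's computation in \cite{mumfordtowards}---is exactly what the paper does; the theorem is stated there without a separate proof, the justification being the sentence immediately preceding it.

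Your second, Gysin-spectral-sequence argument is in the spirit of what the paper does for $g=3$, but the stratification you describe is not quite right. The toroidal stratification of $\Tor[2]$ has four strata, corresponding to $\GL(2,\ZZ)$-orbits of cones of dimensions $0,1,2,3$ in the perfect cone fan, not three. Moreover, the open codimension-one stratum is two-dimensional, so it cannot be a $\CC^*$-bundle over the two-dimensional universal elliptic curve $\ua[1]$; it is rather (the $\pm 1$-quotient of) $\ua[1]$ with the zero section removed, fibred over $\ab[1]$. These inaccuracies are harmless for the theorem since your first argument already suffices, but if you want the Gysin approach to stand on its own you should correct the strata and note that your input $\coh{\ab[2]}\cong\QQ[\lambda_1]/(\lambda_1^2)$ is itself obtained in the paper via the Torelli identification, so the two arguments are not really independent.
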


\begin{thm}\label{teo:cohomologA3}
For the toroidal compactification $\Tor[3]$  the cycle map defines an isomorphism $CH_{\QQ}^{\bullet}(\Tor[3]) \cong  H^{\bullet}(\Tor[3],\QQ)$. There is no odd dimensional cohomology and the even Betti numbers are given by
  $$  \begin{array}{c|ccccccc}
    i&0&2&4&6&8&10&12\\\hline
    b_i&1&2&4& 6 &4&2&1
    \end{array}
  $$
\end{thm}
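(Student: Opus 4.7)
The plan is to compute $H^\bullet(\Tor[3],\QQ)$ directly using the Gysin long exact sequence in cohomology with compact support applied to the toroidal stratification, and then match the result against van der Geer's Chow ring computation in \cite{vdgeerchowa3} to obtain the cycle class isomorphism. In genus $3$ the perfect cone, second Voronoi and central cone decompositions coincide, so there is a single canonical $\Tor[3]$. Its stratification is indexed by $\GL(3,\ZZ)$-orbits of cones in the admissible fan $\Sigma$ inside $\Sym^2_{\rc}(\RR^3)$, and each stratum is a locally closed subset that admits a description as a bundle, up to finite quotients, over $\ab[k]$ for some $k\leq 2$.

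First I would enumerate the strata and compute the compactly supported cohomology of each. The open stratum is $\ab[3]$ itself, whose rational cohomology is Hain's computation recalled above; Poincar\'e duality converts this into $\cohc{\ab[3]}$ (this is legal since $\ab[3]$ is rationally smooth). The codimension-one stratum is, up to a $\pm 1$-quotient, the universal semi-abelian family over $\ab[2]$, so its cohomology is obtained via the Leray spectral sequence for $\calX_2\to\ab[2]$ from the known cohomology of $\ab[2]$ (Theorem~\ref{teo:cohomologA2}). Deeper strata are handled similarly: they are universal (partial) families over $\ab[1]$ and, at the deepest level, finite quotients of toric pieces over a point; in each case the cohomology is explicit.

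Next I would assemble the pieces via the Gysin long exact sequence
\begin{equation*}
\cdots \to \cohcloc{k}{U}{\QQ} \to \cohcloc{k}{X}{\QQ} \to \cohcloc{k}{Z}{\QQ} \to \cohcloc{k+1}{U}{\QQ} \to \cdots,
\end{equation*}
applied inductively with $Z$ a closed stratum and $U=X\setminus Z$. The crucial point is purity: after collecting the strata one checks that in each degree only one Hodge--Tate type $\QQ(-p)$ appears, so all connecting homomorphisms vanish and the $E$-polynomial of $\Tor[3]$ equals the sum of those of the strata. Since $\Tor[3]$ is projective, $H^k_c(\Tor[3])=H^k(\Tor[3])$, and one reads off the claimed Betti numbers $1,2,4,6,4,2,1$ together with the vanishing of odd cohomology.

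Finally, each cohomology class produced above is visibly algebraic: it is either a tautological class pulled back from an $\ab[k]$-stratum or a boundary/toric class. This shows surjectivity of the cycle map $CH^\bullet_\QQ(\Tor[3])\to H^\bullet(\Tor[3],\QQ)$, and comparison of total dimensions with van der Geer's Chow ring forces injectivity, yielding the claimed isomorphism. The main obstacle is the combinatorial bookkeeping: identifying precisely the $\GL(3,\ZZ)$-orbits of cones in $\Sigma$ together with their stabilizers, working out the resulting finite quotients in each stratum, and confirming purity throughout — in particular verifying that the non-pure extension in $\coh[6]{\ab[3]}$ coming from Hain's theorem is killed by a boundary contribution so that $\coh[6]{\Tor[3]}$ is pure of type $(3,3)$, as required for agreement with the Chow ring.
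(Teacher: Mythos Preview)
Your approach is essentially the same as the paper's: compute $H^\bullet(\Tor[3],\QQ)$ via the Gysin long exact sequence in compactly supported cohomology for the toroidal stratification, and then compare with van der Geer's Chow ring calculation in \cite{vdgeerchowa3}. One small internal inconsistency to clean up: you assert that ``all connecting homomorphisms vanish'' by purity, but then correctly observe that the weight-$0$ piece of $\cohc[6]{\ab[3]}$ (dual to the $\QQ(-3)$ summand in Hain's $\coh[6]{\ab[3]}$) must be killed by a boundary differential---so not every connecting map is zero, and the degeneration argument needs to be phrased accordingly.
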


We also note that due to van der Geer's results \cite{vdgeerchowa3} explicit generators of $H^{\bullet}(\Tor,\QQ)$ for $g=2,3$ are known, as well as the ring structure. 

The method of computing the cohomology of a (smooth) toroidal compactification using its natural stratification is sufficiently robust that it can be applied to $\Vor[4]$ as well. The key point here is that, 
although a general abelian fourfold is not the jacobian of a curve, the Zariski closure of the locus of jacobians in $\Sat[4]$ is an ample divisor $\overline{J}_4$. In particular, its complement $\Sat[4]\setminus \overline{J_4}=\ab[4]\setminus J_4$ is an affine variety of dimension $10$ and thus its cohomology groups vanish in degree $>10$. Hence, there is a range in degrees where the cohomology of compactifications of $\ab[4]$ can be determined from cohomological information on $\ab$ with $g\leq 3$ and on the moduli space $\M4$ of curves of genus $4$. 
Using Poincar\'e duality, this is enough to compute the cohomology of the second Voronoi compactification $\Vor[4]$, which is smooth, in all degrees different from the middle cohomology $H^{10}$. However, a single missing Betti number can always be recovered from the Euler characteristic of the space.

By the work of Ta\"\i{}bi, see Proposition (\ref{prop:eA4}) in the appendix,  it is now known that the Euler characteristics $e(\ab[4])=9$. This allows us to rephrase the results of \cite{huto2} as follows:

\begin{thm}\label{teo:cohomologyA4}
The following holds:
  \begin{itemize}
    \item[\rm (i)]
  The rational cohomology of $\Vor[4]$ vanishes in odd degree and is algebraic in all even degrees. The Betti numbers are given by
  $$  \begin{array}{c|ccccccccccc}
    i&0&2&4&6&8&10&12&14&16&18&20\\\hline
    b_i&1&3&5&11&17&19&17&11&5&3&1
    \end{array}
  $$
\item[\rm (ii)]
  The Betti numbers of $\Perf[4]$ in degree $\leq 8$ are given by
  $$
  \begin{array}{c|ccccccccc}
    i&0&1&2&3&4&5&6&7&8\\\hline
    b_1&1&0&2&0&4&0&8&0&14
    \end{array}
  $$
  and the rational cohomology in this range consists of Tate Hodge classes of weight $2i$ for each degree $i$.
\item[\rm (iii)]
    The even  Betti numbers of $\Sat[4]$ satisfy the conditions described below:
   $$  \begin{array}{c|ccccccccccc}
    i&0&2&4&6&8&10&12&14&16&18&20\\\hline
    b_i&1&1&1&3&3&\geq 2& \geq 2& \geq 2& \geq 1& 1&1
  \end{array}
  $$
where all Hodge structures are pure of Tate type with the exception of $\coh[6]{\Sat[4]}=\QQ(-3)^{\oplus 2}+\QQ$ and $\coh[8]{\Sat[4]}=\QQ(-4)^{\oplus 2}+\QQ(-1)$. Furthermore, the odd Betti numbers of $\Sat[4]$ vanish in degree $\leq 7$. 
  \end{itemize}
  \end{thm}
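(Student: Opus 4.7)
My plan is to compute $H^\bullet(\Vor[4])$ first via the Gysin spectral sequence in compactly supported cohomology associated with the toroidal stratification, and then extract the information about $\Perf[4]$ and $\Sat[4]$. The boundary strata of $\Vor[4]$ are indexed by $\GL(4,\ZZ)$-orbits of cones in the second Voronoi fan; each such stratum is, up to a finite quotient, a torus bundle over a universal family of abelian varieties sitting over some $\ab[k]$ with $k \leq 3$. Their rational cohomology is controllable using Theorems~\ref{teo:cohomologA2} and \ref{teo:cohomologA3}, Hain's description of $H^\bullet(\ab[3])$, and the Leray spectral sequence for universal families, which degenerates for representation-theoretic reasons. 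For the interior contribution the key observation is that the complement of the Torelli image $J_4$ in $\ab[4]$ is affine of dimension $10$ and hence has no cohomology above degree $10$; combined with the known rational cohomology of $\M4$, this determines $H^\bullet(\ab[4])$ in a sufficient range.

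These inputs yield $H^\bullet(\Vor[4])$ in every degree except the middle one $H^{10}$. For that I invoke Poincaré duality (stack-smoothness of $\Vor[4]$) to fix $b_k$ for $k > 10$ from the values for $k < 10$, and then use $e(\Vor[4]) = \sum_\sigma e(\mathrm{stratum}_\sigma)$ together with $e(\ab[4]) = 9$ from Proposition~\ref{prop:eA4} of the appendix to solve for $b_{10}$. Because each stratum already has pure Tate cohomology, purity and algebraicity automatically propagate to $\Vor[4]$.

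For (ii) I would run the analogous stratification argument for $\Perf[4]$, whose boundary strata are indexed by $\GL(4,\ZZ)$-orbits of perfect forms. These orbits are explicitly enumerated in genus~$4$, and the cohomology of the corresponding strata is computed with the same ingredients as above; the smaller number of perfect-cone orbits compared to second-Voronoi cones accounts for the smaller Betti numbers. The degree bound $k \leq 8$ is what the available inputs on the low-codimensional strata reliably control. For (iii) I would apply the Gysin spectral sequence to the natural stratification $\Sat[4] = \bigsqcup_{k=0}^{4}\ab[k]$ in compactly supported cohomology. The compactly supported cohomology of each $\ab[k]$ is known either from prior work or, for $k=4$, from the first step, and the $E_\infty$-terms produce the stated bounds. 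The extensions $\QQ(-3)^{\oplus 2}+\QQ$ and $\QQ(-4)^{\oplus 2}+\QQ(-1)$ in degrees $6$ and $8$ are forced by compatibility with the extensions already present in $H^\bullet(\ab[3])$ and $H^\bullet(\Sat[3])$ under the closed embedding $\Sat[3] \hookrightarrow \Sat[4]$. The principal obstacle is to control the differentials of the Gysin spectral sequence for $\Sat[4]$ and to rule out unwanted splittings of the mixed Hodge extensions by propagating them through the stratification.
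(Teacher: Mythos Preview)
Your proposal is essentially the approach of the paper (which here defers to \cite{huto2} for the actual computations): for (i), the Gysin spectral sequence in compactly supported cohomology attached to the toroidal stratification of $\Vor[4]$, combined with the affineness of $\ab[4]\smallsetminus J_4$, Poincar\'e duality on the stack-smooth $\Vor[4]$, and the input $e(\ab[4])=9$; for (ii), the same spectral sequence with the rank-$4$ strata replaced by those of the perfect cone fan; for (iii), the Gysin sequence for the Satake stratification.

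One point where your argument deviates from the paper's is the justification of the mixed Hodge structures in~(iii). You appeal to compatibility with the closed embedding $\Sat[3]\hookrightarrow\Sat[4]$. The paper instead uses a \emph{weight} constraint extracted from the toroidal spectral sequence for $\Vor[4]$: inspection of that spectral sequence shows that $H^{\bullet}(\ab[4])$ (in the relevant degrees) cannot carry classes of Hodge weight~$8$, and this is what pins down the filtration pieces in $H^6$ and $H^8$ of $\Sat[4]$. Your compatibility claim is plausible, but restriction to the boundary $\Sat[3]$ alone does not obviously force the extensions---some boundary classes could in principle lift to classes of higher weight on $\Sat[4]$---so the weight argument from the $\Vor[4]$ side is the cleaner way to close this. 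It would strengthen your write-up to incorporate that step explicitly.
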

 \begin{proof}
  (i) is \cite[Theorem 1]{huto2} updated by taking into account the result $e(\ab[4])=9$.\\
  (ii) is obtained from altering the spectral sequence of \cite[Table 1]{huto2} in order to compute the rational cohomology of $\Perf[4]$, replacing the first column with the results on the strata of toroidal rank $4$ in \cite[Theorem 25]{huto2}.\\
  (iii) can be proven by considering the Gysin exact sequence in cohomology with compact support associated with the natural stratification of $\Sat[4]$ and taking into account that the cohomology of $\ab[4]$ is known in degree $\geq 12$ 
 and that it cannot contain classes of Hodge weight $8$ by the shape of the spectral sequence of \cite[Table 1]{huto2}.
  \end{proof}

Furthermore, in \cite[Corollary 3]{huto2} we prove that $\coh[12]{\ab[4]}$ is an extension of $\QQ(-6)$ (generated by the tautological class $\lambda_3\lambda_1^3$) by $\QQ(-9)$. A reasonable expectation for 
$\coh{\ab[4]}$ would be that it coincides with $R_3$ in all other degrees.

However, the local system with constant coefficients is not the only one worth while considering for $\ab$. Indeed, looking at cohomology with values in non-trivial 
local systems
has very important applications, both for arithmetic and for geometric questions. Let us recall that $\HH_g$ is contractible and thus the orbifold fundamental group of $\ab$ is isomorphic to $\Sp(2g,\ZZ)$.
Hence representations of $\Sp(2g,\ZZ)$ give rise to (orbifold) local systems on $\ab$.
Recall that
all irreducible representations of $\Sp(2g,\CC)$ are defined over the integers.
 As is well known (\cite[Chapter 17]{fuhabook}, the irreducible representations of  $\Sp(2g,\CC)$ are indexed by their highest weight $\lambda=(\lambda_1,\dots,\lambda_g)$ with 
 $\lambda_1\geq \lambda_2\geq \ldots \geq \lambda_g$. Then for each highest weight $\lambda$ we can define a local system $\VV_\lambda$, as follows. We consider the associated rational representation $\rho_\lambda:\;\Sp(2g,\ZZ)\rightarrow V_\lambda$ and define 
$$
\VV_\lambda:=\Sp(2g,\ZZ) \backslash (\HH_g\times V_\lambda)
\ \ \
M(\tau,v)=(M\tau,\rho_\lambda(M)v).
$$

Each point of the stack $\ab$ has an involution given by the inversion on the corresponding abelian variety. It is easy to check that this involution acts by $(-1)^{w(\lambda)}$ on $\VV_\lambda$, where we call $w(\lambda):=\lambda_1+\cdots+\lambda_g$ the \emph{weight} of the local system $\VV_\lambda$. This implies that the cohomology of all local systems of odd weight is trivial, so that we can concentrate on local systems of even weight.

Let us recall that also cohomology with values in local systems carries mixed Hodge structures. Faltings and Chai studied them in \cite[Chapter VI]{fachbook} using the BGG-complex and found in this way a very explicit description of all possible steps in the Hodge filtration. The bounds on the Hodge weights are those coming naturally from Deligne's Hodge theory:
\begin{thm}
The mixed Hodge structures on the groups $\cohvi[k]{\ab}\lambda$ have weights larger than or equal to $k+w(\lambda)$.
\end{thm}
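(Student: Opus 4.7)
The plan is to recognize the statement as a special case of Deligne's general bound on mixed Hodge structures on cohomology of smooth varieties with coefficients in a polarizable variation of Hodge structure (VHS).

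First, I would verify that $\VV_\lambda$ underlies a polarizable VHS of pure weight $w(\lambda)$ on $\ab$. The starting point is the universal family $\pi\colon\calX_g\to\ab$, whose relative first cohomology $R^1\pi_*\QQ$ is a polarizable VHS of weight $1$ corresponding to the standard representation of $\Sp(2g)$. Every irreducible representation $V_\lambda$ of $\Sp(2g,\CC)$ is cut out of a tensor power of the standard representation by Schur-type projectors compatible with the symplectic pairing. These operations are exact functors on polarizable $\QQ$-VHS and behave additively on the weight: a tensor factor of $(R^1\pi_*\QQ)^{\otimes N}$ has weight $N$, and the symplectic Schur functor for $\lambda$ lands in the subspace of tensor weight $w(\lambda)=\lambda_1+\cdots+\lambda_g$. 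Hence $\VV_\lambda$ is a polarizable sub-VHS of weight exactly $w(\lambda)$.

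Second, I would apply Deligne's weight bound: for a smooth quasi-projective variety $U$ and a polarizable VHS $\VV$ of weight $n$ on $U$, the mixed Hodge structure on $H^k(U,\VV)$ is mixed of weights in the range $[k+n,\,2k+n]$. Since $\ab$ is only a smooth Deligne--Mumford stack, I would pass to a level cover $\ab(\ell)$ with $\ell\geq 3$, which is a smooth quasi-projective variety on which $\VV_\lambda$ pulls back to an honest VHS. Deligne's bound then gives the weights $\geq k+w(\lambda)$ on $\cohloc{k}{\ab(\ell)}{\VV_\lambda}$. Taking invariants under the finite group $\Gamma_g/\Gamma_g(\ell)$ is exact on $\QQ$-vector spaces and preserves the mixed Hodge structure, and recovers $\cohvi[k]{\ab}\lambda$ as a sub-MHS; the weight bound is inherited.

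The main obstacle, or more precisely the point requiring most care, is establishing the VHS structure on $\VV_\lambda$ with the correct weight: one must check that the Schur-functor construction in the symplectic setting indeed produces a sub-VHS (not merely a sub-local-system) and that the normalization conventions pin down the weight to be $w(\lambda)$ rather than some shift. Once this is in place, Deligne's bound yields the statement essentially formally. The stronger BGG-complex approach of Faltings--Chai, which identifies the entire Hodge filtration rather than just bounding its weights, is required for finer computations but is not needed for the inequality $w \geq k + w(\lambda)$.
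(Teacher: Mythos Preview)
Your proposal is correct and matches the paper's own treatment. The paper does not give a proof at all: it simply states the theorem, attributing the construction of the mixed Hodge structures to Faltings--Chai (via the BGG-complex) and remarking that ``the bounds on the Hodge weights are those coming naturally from Deligne's Hodge theory.'' Your argument is precisely an unpacking of that last clause --- realizing $\VV_\lambda$ as a polarizable sub-VHS of $(R^1\pi_*\QQ)^{\otimes w(\lambda)}$ of weight $w(\lambda)$, passing to a smooth level cover, and invoking the general lower weight bound for cohomology of a smooth variety with VHS coefficients --- and your closing remark that the full BGG description is unnecessary for the inequality alone is exactly the distinction the paper is drawing.
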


\begin{rem}
In practice, the formulas for cohomology of local systems are easier to state in terms of cohomology with local support. Let us recall that Poincar\'e duality for $\ab$ implies
$$
\cohcvi\ab\lambda \cong(\cohvi[g(g+1)-\pu]\ab\lambda)^*\otimes \QQ\left(-\frac{g(g+1)}2-w(\lambda)\right)
$$
so that the weights on $\cohcvi[k]\ab\lambda$ are smaller than or equal to $k+w(\lambda)$.
\end{rem}

To understand how cohomology with values in non-trivial local systems behaves. let us consider the case of the moduli space $\ab[1]$ of elliptic curves first. In this case we obtain a sequence of local systems $\VV_k=\Sym^k\VV_1$ for all $k\geq 0$, and $\VV_1$ coincides with the local system $R^1\pi_*{\QQ}$ for $\pi:\;\calX_1\rightarrow\ab[1]$. 
In this case the cohomology of $\ab[1]$ with coefficients in $\VV_k$ is known by Eichler--Shimura theory (see \cite{deligne-modforms}). In particular, by work of Deligne and Elkik \cite{elkik}, the following explicit formula describes the cohomology with compact support of $\ab[1]$:
\begin{equation}
  \label{eq:eichlershimura}
\cohcvi[1]{\ab[1]}{2k}=\bS[2k+2]\oplus \QQ
\end{equation}
where $\bS[2k+2]$ is a pure Hodge structure of Hodge weight $2k+1$ with Hodge decomposition $\bS[2k+2]_\CC=S_{2k+2}\oplus \overline{S_{2k+2}}$ where $S_{2k+2}$ and $\overline{S_{2k+2}}$ 
can be identified with the space of cusp forms of weight $2k+2$ and its complex conjugate respectively.

Formula \eqref{eq:eichlershimura} has been generalized to genus $2$ and $3$ by work of Faber, Van der Geer and Bergstr\"om \cite{fg1,fg2,bfg}. Using the fact that for these values of $g$ the image of the Torelli map is dense in $\ab$, they obtain (partially conjectural) formulas for the Euler characteristic of $\cohcvi\ab\lambda$ in the Grothendieck group of rational Hodge structures from counts of the number of curves of genus $g$ with prescribed configurations of marked points, defined over finite fields. Let us observe that these formulas in general do not give descriptions of the cohomology groups themselves, for instance because cancellations may occur in the computation of the Euler characteristic.

In the case $g=2$ a description of the cohomology groups analogue to that in \eqref{eq:eichlershimura} is  given in \cite{petersen-ab2}. His main theorem is a description of $\cohcvi{\ab[2]}{a,b}$ in terms of cusp forms for $\SL(2,\ZZ)$ and vector-valued Siegel cusp forms for $\Sp(4,\ZZ)$. In particular, for $(a,b)\neq(0,0)$ one has that $\cohcvi[k]{\ab[2]}{a,b}$ vanishes unless we have $2\leq k\leq 4$. The simplest case is the one in which we have $a>b$.
In this case, the result for cohomology with rational coefficients is the following:
\begin{thm} [{\cite[Thm. 2.1]{petersen-ab2}}]
  For $a>b$ and $a+b$ even, we have
\begin{align*}
  \cohcvi[3]{\ab[2]}{a,b}&=\bS_{a-b,b+3}+\bS_{a-b+2}(-b-1)^{\oplus s_{a+b+4}}+\bS_{a+3}
  \\&\phantom{=}
  +\QQ(-b-1)^{\oplus s_{a+b+4}}+\left\{\begin{array}{ll}\QQ(-1) &\text{ if $b=0$,}\\0&\text{ otherwise.}\end{array}\right.
  \\
    \cohcvi[2]{\ab[2]}{a,b}&=\bS_{b+2}+\QQ^{\oplus s_{a-b-2}}+\left\{\begin{array}{ll}\QQ &\text{ if $b>0$ and $a,b$ even,}\\0&\text{ otherwise.}\end{array}\right.,
\end{align*}
Here \begin{itemize}
\item $s_{j,k}$ denotes the dimension of the space of vector-valued Siegel cusp forms for $\Sp(4,\ZZ)$ transforming according to $\Sym^j\otimes \det^{k}$
and $\bS_{j,k}$ denotes a $4s_{j,k}$-dimensional pure Hodge structure of weight $j+2k-3$ with Hodge numbers $h^{j+2k-3,0}=h^{j+k-1,k-2}=h^{k-2,j+k-1}=h^{0,j+2k-3}=s_{j,k}$;
  \item
   $s_k$ denotes the dimension of the space of cusp eigenforms for $\SL(2,\ZZ)$ of weight $k$ and $\bS_{k}$ denotes the corresponding $2s_k$-dimensional weight $k-1$ Hodge structure.
\end{itemize}
   Furthermore $\cohcvi[k]{\ab[2]}{a,b}$ vanishes in all other degrees $k$.
\end{thm}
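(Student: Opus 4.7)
The plan is to combine a stratification of $\ab[2]$ induced by the Torelli morphism with Eichler--Shimura theory in genus one and the Faltings--Chai identification of inner cohomology with cuspidal Siegel modular forms. I would first decompose $\ab[2] = J_2 \sqcup A_{1,1}$, where $J_2 \cong \M 2$ is the open Torelli locus of Jacobians and $A_{1,1} = \Sym^2(\ab[1])$ is the codimension-one locus of products of elliptic curves. This gives a long exact sequence in compactly supported cohomology with coefficients in $\VV_{a,b}$:
\begin{equation*}
\cdots \to \cohcvi[k]{J_2}{a,b} \to \cohcvi[k]{\ab[2]}{a,b} \to \cohcvi[k]{A_{1,1}}{a,b} \to \cohcvi[k+1]{J_2}{a,b} \to \cdots.
\end{equation*}

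Next I would compute the two outer terms separately. For $\cohcvi{A_{1,1}}{a,b}$: under the inclusion $\SL(2,\CC)\times \SL(2,\CC) \hookrightarrow \Sp(4,\CC)$, the irreducible representation $V_{a,b}$ decomposes via a branching rule as a direct sum of outer tensor products $V_i \boxtimes V_j$. The K\"unneth formula applied to $\ab[1]\times \ab[1]$, combined with the Eichler--Shimura formula \eqref{eq:eichlershimura}, describes the cohomology of each summand explicitly, and taking $S_2$-invariants yields $\cohcvi{A_{1,1}}{a,b}$. This is where the Saito--Kurokawa/Yoshida-type pieces $\bS_{a+3}$, $\bS_{a-b+2}(-b-1)^{\oplus s_{a+b+4}}$ and $\QQ(-b-1)^{\oplus s_{a+b+4}}$ should arise, the appearance of $s_{a+b+4}$ reflecting the classical coincidence between dimensions of spaces of elliptic cusp forms and the dimensions $s_{0,k}$ produced by Saito--Kurokawa lifting. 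For $\cohcvi{\M 2}{a,b}$: the genuine genus two Siegel contribution $\bS_{a-b,b+3}$ is extracted from inner cohomology through the Faltings--Chai/Weissauer isomorphism, while the boundary of $\Mb 2$ (the divisors of irreducible nodal curves and reducible curves) contributes further Eisenstein terms via a Getzler-style spectral sequence, feeding into the low-degree pieces $\bS_{b+2}$ and $\QQ^{\oplus s_{a-b-2}}$.

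Finally I would assemble the long exact sequence and read off the result, using the Faltings--Chai weight bounds to confine nonzero contributions to degrees $2,3,4$, and then Poincar\'e duality to reduce everything to degrees $2$ and $3$. The Faber--van der Geer--Bergstr\"om Euler characteristic formula serves as a consistency check at the level of Grothendieck groups. The main obstacle is identifying the connecting homomorphisms and disentangling the multiple summands of the same Hodge weight $a+b+3$ that appear in $\cohcvi[3]{\ab[2]}{a,b}$, namely the pure Siegel piece $\bS_{a-b,b+3}$ of weight $(a-b)+2(b+3)-3 = a+b+3$ and the endoscopic piece $\bS_{a-b+2}(-b-1)^{\oplus s_{a+b+4}}$ of the same weight. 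Distinguishing them requires either a careful Hodge-theoretic analysis via $(p,q)$-types or an appeal to Arthur's endoscopic classification for $\Sp(4)$, which predicts exactly which automorphic packets contribute to each stratum. The sporadic summands (the $\QQ(-1)$ when $b=0$ and the extra $\QQ$ when $b>0$ and $a,b$ are both even) arise from the extra invariants of the $S_2$-action on $A_{1,1}$ in the degenerate situations where one of the K\"unneth factors carries the trivial local system.
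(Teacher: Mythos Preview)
Your stratification approach is genuinely different from what the paper describes. Petersen's proof, as summarized here, is representation-theoretic throughout: one splits $\cohcvi{\ab[2]}{a,b}$ into Eisenstein cohomology (the kernel of the natural map to $\cohvi{\ab[2]}{a,b}$) and inner cohomology (the image). Harder had already computed the Eisenstein part for $\ab[2]$ with arbitrary coefficients. For the inner part Petersen uses Flicker's description of the discrete automorphic spectrum of $\operatorname{PGSp}(4)$ to prove that inner cohomology equals cuspidal cohomology and then reads off the packet-by-packet contribution (general type giving $\bS_{a-b,b+3}$, Saito--Kurokawa giving $\bS_{a+3}$, Yoshida giving the term $\bS_{a-b+2}(-b-1)^{\oplus s_{a+b+4}}$). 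No Torelli stratification enters.

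Your route has a real gap at the $\M 2$-step. You never explain how to obtain $\cohcvi{\M 2}{a,b}$ as a graded Hodge structure: invoking ``inner cohomology through the Faltings--Chai/Weissauer isomorphism'' refers to a structure on $\ab[2]$, not on the open stratum $\M 2$, so at that point you have silently left the stratification framework; and a ``Getzler-style spectral sequence'' for $\partial\Mb 2$ computes cohomology of $\Mb 2$, not of $\M 2$, leaving you with yet another long exact sequence whose connecting maps must be controlled. The obstacle you yourself flag at the end --- disentangling same-weight summands via Arthur's endoscopic classification for $\Sp(4)$ --- is precisely the input (equivalent to Flicker's) that Petersen applies directly to $\ab[2]$. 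So the detour through the Torelli locus reorganises the problem but does not avoid the automorphic classification that actually drives the argument; and the intermediate steps you need (individual cohomology groups of $\M 2$ with symplectic coefficients, not just their Euler characteristics) are not available independently of that classification.
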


The formula for the local systems $\VV_{a,a}$ with $a> 0$ is not much more complicated, but it involves also subspaces of cusp eigenforms that satisfy special properties, i.e. not being Saito--Kurokawa lifts, or the vanishing of the central value $L(f,\frac12)$.
Moreover, the group $\cohcvi[2]{\ab[2]}{a,a}$ does not vanish in general but has dimension $s_{2a+4}$.

The proof of the main result of \cite{petersen-ab2} is based on the generally used approach of decomposing the cohomology of $\ab[g]$ into \emph{inner cohomology} and \emph{Eisenstein cohomology}. Inner cohomology is defined as
$$
\cohinvi{\ab}{\lambda}:=\operatorname{Im}[\cohcvi{\ab}\lambda\rightarrow \cohvi{\ab}\lambda],
$$
while Eisenstein cohomology is defined as the kernel of the same map.

The Eisenstein cohomology of $\ab[2]$ with arbitrary coefficients was completely determined by Harder \cite{harder-ab2}.
Thus it is enough to concentrate on inner cohomology.

The \emph{cuspidal} cohomology $\cohcuspvi{\ab}\lambda$  is defined as the image in $\cohvi{\ab}\lambda$ of the space of harmonic $\VV_\lambda$-valued forms whose coefficients are $\VV_\lambda$-valued cusp forms. By \cite[Cor. 5.5]{borel2}, cuspidal cohomology is a subspace of inner cohomology. Since the natural map from cohomology with compact support to cohomology factors through $L^2$-cohomology, we always have a chain of inclusions
$$
\cohcuspvi{\ab}\lambda\subset\cohinloc{\pu}{\ab}{\VV_\lambda\otimes\CC}\subset\cohtwovi{\ab}\lambda.
$$

These inclusions become more explicit if we consider the Hecke algebra action on $\cohtwovi{\ab}\lambda$ that comes from its interpretation in terms of 
$(\mathfrak g,K_\infty)$-cohomology. 
In this way $\cohtwovi{\ab}\lambda$ can be decomposed as direct sum of pieces associated to the elements of the discrete spectrum of $L^2(\Gamma\backslash G)$. Then $\cohcuspvi{\ab}\lambda$ corresponds to the pieces of the decomposition corresponding to cuspidal forms.

Let us recall that one can realize the quotient $\ab[g]/\pm 1$ as a Shimura variety for the group $\operatorname{PGSp}(2g,\ZZ)$. The coarse moduli spaces of $\ab[g]/\pm 1$ and $\ab[g]$ coincide and it is easy to identify local systems on both spaces. Moreover, in the case $g=2$ there is a very precise description of automorphic forms for $\operatorname{PGSp}(4,\ZZ)$
in \cite{flicker}, and in particular of all representations in the discrete spectrum. A careful analysis of these results allows Petersen to prove in \cite[Prop.~4.2]{petersen-ab2} that there is an equality $\cohinvi{\ab[2]}{a,b}=\cohcuspvi{\ab[2]}{a,b}$ 
of the inner and the cuspidal cohomology in this case.
Moreover, the decomposition of the discrete spectrum of $\operatorname{PGSp}(4,\ZZ)$ described by Flicker can be used to obtain an explicit formula for $\cohinvi{\ab[2]}{a,b}$ as well.

We want to conclude this section with a discussion about the intersection cohomology of $\Sat$ and the  toroidal compactifications for small genus. 
For $g\leq 4$ a geometric approach was given in \cite{HGIH} where all intersection Betti numbers with the exception of the middle Betti number in genus $4$ were determined.
As it was pointed out to us by Dan Petersen, representation theoretic methods and in particular the work by O. Ta\"{\i}bi, give an alternative and very powerful method for computing these numbers, as will be 
discussed in the appendix, in particular Theorem \ref{thm:IH_Sat_taut_smallg}.
For ``small'' weights $\lambda$, and in particular for $\lambda=0$ and $g \leq
11$ the classification theorem due to Chenevier and Lannes \cite[Th\'eor\`eme
3.3]{CheLan} is a very effective alternative to compute these intersection Betti
numbers, also using Arthur's multiplicity formula for symplectic groups.
Here we state the following

\begin{thm}\label{teo:Intersectionsmall} 
For $g \leq 5$ there is an isomorphism of graded vector spaces between the intersection cohomology of the Satake compactification $\Sat$ and the tautological ring
\begin{equation*}
IH^{\bullet}(\Sat) \cong R_g.
\end{equation*} 
\end{thm}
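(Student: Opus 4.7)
The proof I propose reduces the statement to a dimension count. By Proposition \ref{prop:tautinih} there is a graded embedding $R_g \hookrightarrow IH^\bullet(\Sat, \QQ)$, so it is enough to show that the total dimension of $IH^\bullet(\Sat, \QQ)$ is at most $2^g = \dim R_g$; combined with the embedding, this forces equality in every graded piece.

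The upper bound on the dimension I would obtain through Zucker's conjecture, which after extension of scalars identifies $IH^k(\Sat, \CC) \cong H^k_{(2)}(\ab)$, together with the decomposition (\ref{equ:representation}) of the $L^2$-cohomology in terms of the discrete automorphic spectrum. The contribution of the trivial representation to $L^2$-cohomology is well known to coincide with $R_g$, as in the alternative argument given at the end of the proof of Proposition \ref{prop:tautinih}. Hence the theorem is equivalent to showing that, for $g \leq 5$ and trivial coefficient system, no non-trivial discrete Arthur--Langlands parameter for $\Sp(2g)$ of level one with the correct infinitesimal character contributes to $H^\bullet_{(2)}(\ab)$.

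This vanishing is precisely what is supplied by the appendix. Concretely, one proceeds along one of two routes: via Arthur's endoscopic classification for $\Sp(2g)$, reducing the enumeration of discrete parameters inductively to the cases $g' < g$; or via the Chenevier--Lannes classification \cite[Th\'eor\`eme 3.3]{CheLan} of level-one algebraic cuspidal automorphic representations of $\GL_n$ of small motivic weight, combined with Arthur's multiplicity formula, which together list all parameters that could possibly contribute. Both routes yield $\dim IH^\bullet(\Sat, \QQ) = 2^g$ for $g \leq 5$, a computation compiled in Theorem \ref{thm:IH_Sat_taut_smallg} of the appendix; feeding this into the reduction of the first paragraph closes the argument.

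The main obstacle is pinpointing the right vanishing statement uniformly in $g$. For $g \leq 4$ a purely geometric verification is also available: applying the decomposition theorem to a smooth toroidal resolution $\STor \to \Sat$ exhibits $IH^\bullet(\Sat, \QQ)$ as a direct summand of $H^\bullet(\STor, \QQ)$, and the toroidal Betti numbers from Theorems \ref{teo:cohomologA2}, \ref{teo:cohomologA3} and \ref{teo:cohomologyA4} suffice to identify the summands contributed by the boundary strata $\ab[k]$ with $k < g$ and their toric fibres, thus isolating $IH^\bullet(\Sat)$. This geometric route is not available in genus $5$, since the cohomology of no smooth toroidal compactification of $\ab[5]$ is currently accessible; the automorphic route is therefore essential there, and its subtlety lies in ruling out the non-tempered Arthur packets that could a priori contribute to the middle intersection cohomology.
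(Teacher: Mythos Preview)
Your proposal is correct and follows essentially the same route as the paper: the embedding $R_g \hookrightarrow IH^\bullet(\Sat)$ from Proposition~\ref{prop:tautinih} together with the automorphic classification showing that for $g\leq 5$ the only level-one discrete parameter with infinitesimal character $\rho$ is the trivial one $[2g+1]$ (Corollary~\ref{cor:classification_Psi_smallg}), whose $(\mathfrak g,K)$-cohomology realizes $R_g$ exactly. One small correction to your final paragraph: the geometric argument for $g=4$ via the decomposition theorem is not \emph{purely} geometric in the sense you suggest, since it still requires the vanishing $IH^6(\Sat[3],\VV_{1,1})=0$ as an external input (this is precisely the missing piece in \cite{HGIH}, supplied by Corollary~\ref{cor:IHSat3V11}), and that vanishing is itself established by the automorphic route rather than by the toroidal Betti numbers alone.
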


\begin{rem}
As explained in the appendix, see Remark \ref{rem:ihsatcomments}, this result is sharp. For $g \geq 6$ there is a proper inclusion $R_g \subsetneq IH^{\bullet}(\Sat)$ and starting from $g=9$ there is even
non-trivial intersection cohomology in odd degree. See also Example
\ref{exa:IH_as_spin} for the computation of intersection cohomology for $g=6,7$.
\end{rem}

\begin{proof}
There are two possibilities to compute the intersection cohomology of $\Sat$, at least in principle, explicitly. The first one is {\em geometric} in nature, the second uses {\em representation theory}. we shall first
discuss the geometric approach. This was developed in \cite{HGIH} and is based  on the decomposition theorem due to Beilinson, Bernstein, Deligne and Gabber. For this we refer the reader to the 
excellent survey paper by de Cataldo and Migliorini \cite{deCataldoMigliorini}.
 We shall discuss this here in the special case of genus $4$. 
We use the stratification of the Satake compactification given by  
\begin{equation*}\label{eq:strata-satake4}
\Sat[4] = \ab[4] \sqcup \ab[3] \sqcup \ab[2] \sqcup \ab[1]  \sqcup \ab[0].
\end{equation*}
In this genus the morphism 
$\varphi: \Vor[4] \to \Sat[4]$ is a resolution of singularities (up to finite quotients). We denote
\begin{equation*}
\beta_i^0:= \varphi^{-1}(\ab[4-i]).
\end{equation*}
Then $\varphi|_{\beta_i^0}: \beta_i^0 \to \ab[4-i]$
is a topological fibration (but the fibres are typically not smooth). This is the basic set-up of the decomposition theorem.  
Since $\Vor[4]$ is rationally smooth, its cohomology and intersection cohomology coincide. Taking into account that the complex dimension of $\Vor[4]$ is $10$ and that
$\ab[k]$ has dimension $k(k+1)/2$, the
decomposition theorem then gives the following (non canonical) isomorphism 
\begin{equation}\label{equ:inclusion}
H^m(\Vor[4], \QQ)  \cong  IH^m(\Sat[4],\QQ) \oplus  \bigoplus_{k <4, i, \beta}^{} IH^{m- 10 + k(k+1)/2 + i} (\Sat[k], {\mathcal L}_{i,k,\beta}).
\end{equation}
where the ${\mathcal L}_{i,k,\beta}$ are certain local systems on $\ab[k]$ and the integer $i$ runs through the interval $[-r(\varphi), r(\varphi)]$ where $r(\varphi)$ is the defect of the map $\varphi$.
The basic idea is this: if one can compute the cohomology on the right hand side of this formula and compute the local systems ${\mathcal L}_{i,k,\beta}$ and their cohomology, then 
one can also compute the intersection cohomology one is interested in. The advantage is that $\Vor$ is a rationally smooth space whose geometry one knows well and that the 
local systems  ${\mathcal L}_{i,k,\beta}$  live on a smaller dimensional space. 

For $k < 4$ we have inclusions $\ab[k] \hookrightarrow \Sat[4]$. Taking a transverse slice at a point $x\in \ab[k]$ in $\Sat[4]$ 
defines the link $N_{k,x}$
and varying the point $x$ gives us the link bundle 
${\mathcal N}_{k}$. Then, taking the intersection cohomology of the  links  ${N}_{k,x}$ we obtain local systems $\mathcal I \mathcal H^j(\mathcal N_k)$ on $\ab[k]$.
Not all of these local systems will play a role in the decomposition theorem, but one truncates this collection of local systems and only considers  those  in the range $j < \codim \beta_k^0$. 
For the biggest non-trivial stratum this means the following: 
the complex codimension of $\ab[3]$ in $\Vor[4]$ is $10-6=4$, and hence we have to consider the local systems  $\mathcal I \mathcal H^j(\mathcal N_3)$  for $j\leq 3$.
In fact these can be determined explicitly and one finds that  $\mathcal I \mathcal H^0(\mathcal N_3)=\QQ$,  $\mathcal I \mathcal H^j(\mathcal N_3)=0$ for $j=1,3$ and 
 $\mathcal I \mathcal H^2(\mathcal N_3)= \VV_{11}$.  These local systems carry suitable weights, but as these play no essential role for the proof which we will outline below, we shall omit the Tate twists here.   

We then have to compare this to the direct image $R\varphi_*(\QQ)$.
For this recall that $\varphi|_{\beta_{4-k_1}^0}: \beta_{4-k_1}^0 \to \ab[k_1]$ is a topological fibre bundle.
To explain the mechanism of the decomposition theorem we again consider the first non-trivial stratum, namely $k=3$, i.e. the biggest stratum.
${\mathcal H}^j(\beta_1^0):=  (R^j{\varphi|_{\beta_{1}^0}})_*(\QQ)$. All local systems $\mathcal I \mathcal H^j(\mathcal N_3)$ in the non-truncated range are direct summands 
of  ${\mathcal H}^j(\beta_1^0):=  (R^j{\varphi|_{\beta_{1}^0}})_*(\QQ)$. However, there will be further local systems, which are not accounted for by the truncated link cohomology,
and their cohomology will appear as summands on the right hand side of the decomposition theorem (\ref{equ:inclusion}). In our case 
 ${\mathcal H}^0(\beta_1^0)= \QQ$ and  ${\mathcal H}^2(\beta_1^0)= \VV_{11}$ are accounted for by the link cohomology, but the 
local systems   ${\mathcal H}^4(\beta_1^0)= \VV_{11}$ and 
 ${\mathcal H}^6(\beta_1^0)=\QQ$ will contribute to the right hand side of  (\ref{equ:inclusion}). All local systems come with a certain weight (Tate twist), but we do not discuss this here.
 In this situation the computation of the local systems  ${\mathcal H}^j(\beta_k^0)$ is easy as $\beta_1^0 \to \ab[3]$ is the universal Kummer family.
 In general this is much harder, but the necessary computations were done in \cite{huto2}.

We then have to continue this process for the smaller dimensional strata and in each case determine the local systems which are not accounted for by the truncated 
intersection cohomology of the link bundles and the locals systems arising from bigger strata. This calculation works surprisingly well and even provides information on the vanishing of the intersection
cohomology of certain links and local systems. The  missing piece of information in \cite{HGIH} was  the intersection cohomology group 
$IH^6(\Sat[3], \VV_{11})$ which indeed  vanishes, see Corollary (\ref{cor:IHSat3V11}) in the appendix.   

This result can also be proven completely by representation theoretic methods. For this we refer the reader to Theorem (\ref{thm:IH_Sat_taut_smallg}) and its proof.
\end{proof}

Finally we turn to  toroidal compactifications. As we have already explained all standard toroidal compactifications coincide in genus $\leq 3$ 
 and are smooth up to finite
quotient singularities. In particular, singular  cohomology and  intersection cohomology coincide.  In genus $g=4$ the Voronoi compactification $\Vor[4]$ is still rationally smooth.
In this genus the perfect  cone compactification and the central cone (or Igusa) compactification coincide: $\Perf[4]\cong\Centr[4]$. Hence the only interesting space from the 
point of view of intersection cohomology is $\Perf[4]$. In this case we also have a morphism $\Vor[4] \to \Perf[4]$ which is, again up to taking quotients with respect to  finite group,
a resolution of singularities.   
\begin{thm}\label{teo:Intersectionperf} 
All the odd degree intersection Betti numbers of $\Perf[4]$ are zero, while the even ones
$ib_j:=\dim IH^j(\Perf[4]),\QQ$ are as follows:
\begin{equation} \label{equ:bettiperf4}
\begin{array}{r|ccccccccccc}
j&0&2&4&6&8&10&12&14&16&18&20\\\hline
ib_j&1&2&4&9&14&16&14&9&4&2&1
\end{array}
\end{equation}
\end{thm}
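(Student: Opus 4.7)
The plan is to apply the decomposition theorem to the natural morphism $\varphi \colon \Vor[4] \to \Perf[4]$, which, as noted just before the theorem, is a resolution of singularities up to finite group quotients. Since $\Vor[4]$ is rationally smooth, its intersection cohomology and its ordinary cohomology coincide and are completely known by Theorem \ref{teo:cohomologyA4}(i). The decomposition theorem therefore produces a (non-canonical) isomorphism
\begin{equation*}
H^m(\Vor[4],\QQ) \cong IH^m(\Perf[4],\QQ) \oplus \bigoplus_{\alpha} IH^{m-d_\alpha}(Z_\alpha, \mathcal{L}_\alpha),
\end{equation*}
where the correction terms are supported on the strata $Z_\alpha$ of $\Perf[4]$ over which $\varphi$ has positive-dimensional fibres. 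Since $H^{\mathrm{odd}}(\Vor[4],\QQ) = 0$, the vanishing of $IH^{\mathrm{odd}}(\Perf[4],\QQ)$ is an immediate consequence.

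First I would make the exceptional locus of $\varphi$ explicit. For $g\leq 3$ the perfect cone and the second Voronoi fans of $\Sym^2_{\rc}(\RR^g)$ coincide, but in genus $4$ they differ in a controlled way over the deeper boundary strata; both the discrepancy between the two fans and the associated exceptional fibres admit an explicit combinatorial description, cf.\ \cite{DutourHulekSchuermann}. Combining this with the stratification of $\Perf[4]$ inherited from $\Sat[4] = \ab[4]\sqcup\ab[3]\sqcup\ab[2]\sqcup\ab[1]\sqcup\ab[0]$, and following the same pattern as in the proof of Theorem \ref{teo:Intersectionsmall}, one identifies stratum by stratum the relevant local systems $\mathcal{L}_\alpha$ and computes their intersection cohomology.

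Next I would pin down the correction terms numerically. From Theorem \ref{teo:cohomologyA4}(ii) we know the ordinary Betti numbers $1,0,2,0,4,0,8,0,14$ of $\Perf[4]$ in degrees $\leq 8$, and these, combined with the known Betti numbers of $\Vor[4]$ and the constraint that the correction summands live on proper strata of $\Perf[4]$, strongly restrict $IH^j(\Perf[4],\QQ)$ for $j\leq 8$. Poincar\'e duality for intersection cohomology on the $10$-dimensional variety $\Perf[4]$ then recovers the values for $j\geq 12$, while the middle degree is pinned down by the Euler-characteristic identity $\sum_j \bigl(b_j(\Vor[4]) - ib_j\bigr) = 17$, which must match the total dimension of the correction summands computed from the exceptional fibres.

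The main obstacle will be the bookkeeping of the correction summands: the exceptional fibres lie over the deeper boundary strata of $\Perf[4]$, and the corresponding local systems must be identified and their intersection cohomology computed from the geometry of these fibres relative to the universal abelian and Kummer families, extending (and slightly complicating) the analysis carried out in the proof of Theorem \ref{teo:Intersectionsmall}. The palindromic shape of the sought sequence $1,2,4,9,14,16,14,9,4,2,1$, together with the independent low-degree computation in Theorem \ref{teo:cohomologyA4}(ii) and the constraint coming from $IH^\bullet(\Sat[4]) \cong R_4$ of Theorem \ref{teo:Intersectionsmall} (which must appear as a summand inside $IH^\bullet(\Perf[4],\QQ)$ via the decomposition theorem for $\Perf[4]\to\Sat[4]$), supply strong internal consistency checks.
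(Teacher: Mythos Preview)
Your opening move --- applying the decomposition theorem to $\varphi\colon\Vor[4]\to\Perf[4]$ and feeding in the cohomology of $\Vor[4]$ from Theorem~\ref{teo:cohomologyA4}(i) --- is exactly the paper's approach, and the vanishing of odd intersection cohomology follows as you say. The paper's execution of the rest, however, is far more direct than what you outline.

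The point you are missing is that the correction summands in the decomposition theorem are entirely governed by the exceptional divisor $E\subset\Vor[4]$ of $\varphi$, and the cohomology of $E$ has already been computed in \cite[Theorem~26(1)]{huto2}. With $H^\bullet(\Vor[4],\QQ)$ and $H^\bullet(E,\QQ)$ both in hand, the decomposition theorem yields the intersection Betti numbers of $\Perf[4]$ immediately by subtraction. No stratum-by-stratum identification of local systems is required, and in particular there is no reason to bring in the Satake stratification $\ab[4]\sqcup\cdots\sqcup\ab[0]$: that stratification organizes the fibres of $\Vor[4]\to\Sat[4]$ (as in the proof of Theorem~\ref{teo:Intersectionsmall}), not those of $\varphi$, which is an isomorphism away from the image of $E$. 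Invoking ``the same pattern as in the proof of Theorem~\ref{teo:Intersectionsmall}'' here sends you down the wrong road.

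Several of your auxiliary inputs also would not do the work you assign them. The ordinary Betti numbers of $\Perf[4]$ from Theorem~\ref{teo:cohomologyA4}(ii) do not directly constrain its intersection Betti numbers, since $\Perf[4]$ is not rationally smooth; your Euler-characteristic bookkeeping and the embedding $R_4\hookrightarrow IH^\bullet(\Perf[4],\QQ)$ are consistency checks at best, not inputs to the computation. The actual content of the proof is already contained in the two cited results: Theorem~\ref{teo:cohomologyA4} for $\Vor[4]$ and \cite[Theorem~26(1)]{huto2} for $E$.
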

\begin{proof}
This is a straightforward application of the decomposition theorem to the resolution $\Vor[4] \to \Perf[4]$. The result then follows from Theorem \ref{teo:cohomologyA4}
together with the knowledge of the 
cohomology of the exceptional divisor $E$, which was computed in \cite[Theorem 26 (1)]{huto2}. 
\end{proof}

\section{Stabilization results}\label{sec:stabilization}

As we have seen in the previous section, obtaining complete results on the cohomology of $\ab$ and its compactifications is very hard even for small  $g$.
In general, fixing an integer $g$ and wanting to know the cohomology in all degrees $k$ is hopeless.
The situation is very different if one instead fixes a degree $k$ and studies the dependency of the $k$th cohomology group on $g$. The reason for this lies in the fact that there are natural maps relating $\ab$ to $\ab[g']$ for $g\leq g'$.

Let us recall that the product of two principally polarized abelian varieties has naturally the structure of a principally polarized abelian variety. At the level of moduli spaces, this implies the existence of natural product maps
$$\begin{array}{rccc}
  \pr:&\ab[g_1]\times \ab[g_2]& \longrightarrow& \ab[g_1+g_2]\\
  & (A,B) &\longmapsto & A\times B.
  \end{array}
$$

In particular, if one fixes an elliptic curve $E\in\ab[1]$ one obtains a sequence of maps $\ab \rightarrow \ab[g+1]$ which induce natural maps $s_g:\; \coh{\ab}\rightarrow \coh{\ab[g+1]}$. As all points in $\ab[1]$ are homotopy equivalent, the maps $s_g$ do not depend on the choice of $E$. The maps $s_g$ can also be described independently by identifying $\coh{\ab}$ with the cohomology of the symplectic group $\Sp(2g,\ZZ)$ and constructing them as the maps in cohomology associated with the inclusion 
$$\begin{array}{ccc}
  \Sp(2g,\ZZ)& \lhook\joinrel\longrightarrow  &\Sp(2g+2,\ZZ)\\
  \left(
  \begin{array}{c|c}
    A&B\\\hline
    C&D
    \end{array}
  \right)
  &\longmapsto &
\left(
  \begin{array}{cc|cc}
    A&\underline{\mathbf 0}^t&B&\underline{\mathbf 0}^t\\
    \underline{\mathbf 0}&1&\underline{\mathbf 0}&0\\\hline
        C&\underline{\mathbf 0}^t&D&\underline{\mathbf 0}^t\\
    \underline{\mathbf 0}&0&\underline{\mathbf 0}&1\\
  \end{array}
 \right).
  \end{array}$$

It is natural to ask
\begin{qu}\label{q:stab}
  Is the map $s_g:\;\coh[k]{\ab[g+1]}\rightarrow \coh[k]{\ab}$ an isomorphism for $g\gg k$?
\end{qu}
If the answer to this question is affirmative, then we say that the cohomology of $\ab$ {\em stabilizes} with respect to $g$; the range of values of $k$ for which $s_g$ is an isomorphism is called the 
{\em stability range}. In the stable range, cohomology coincides with the inductive limit
$$
\coh[k]{\ab[\infty]}:=\varprojlim_g{\coh[k]{\ab}}
$$
which has a natural structure as a Hopf algebra, with products defined by the usual cup products and coproducts defined by the pull-backs of the product maps $\pr$. In particular, by Hopf's theorem (see e.g. \cite[Theorem 3C.4]{hatcher-book} stable cohomology, when it exists, is always a freely generated graded-commutative algebra.

By now, cohomological stability is known for large classes of arithmetic groups. The case of $\Sp(2g)$ (and hence $\ab$) is a classical result due to Borel. It was first proved in \cite{borel1} by identifying the cohomology of $\ab$ in the stable range with the space of $\Sp(2g,\ZZ)$-invariant harmonic forms of the appropriate degree.
The key point is that in the stable range cohomology can be calculated using differential forms that satisfy a certain logarithmic growth condition near the boundary. This part of the construction relies on sheaf theory on the Borel--Serre compactification $\BS$ of \cite{BorelSerre}.
Later in \cite{borel2}, Borel realized that it was possible to extend his stability theorem also to arbitrary local systems on $\ab$ and that a more flexible choice of the growth condition considered would give a better stability range. In the case of the arithmetic group $\Sp(2g,\ZZ)$, this range was made explicit in \cite[Thm.~3.2]{hain-infinitesimal}. The complete result for $\ab$ can be summarized as follows:

\begin{thm}[\cite{borel1,borel2}]
\label{thm:stable}
The cohomology of $\ab$ with $\QQ$-coefficients stabilizes in degree $k<g$ and in this range it is freely generated by the odd $\lambda$-classes $\lambda_1,\lambda_3,\lambda_5,\ldots$. 
Furthermore, the cohomology of $\ab$ with values in an arbitrary local system $\VV$ vanishes in degree $k<g$ unless $\VV$ has a non-trivial constant summand.
\end{thm}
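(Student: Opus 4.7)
The plan is to follow Borel's classical strategy, which reduces the stability statement to the topology of the compact dual $Y_g$. Three steps are involved.

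\emph{Step 1: Reduction to relative Lie algebra cohomology.} Because $\HH_g$ is contractible, one has $\coh[k]{\ab}\cong H^k(\Sp(2g,\ZZ),\QQ)$ as orbifold cohomology, and analogously $\cohvi[k]\ab\lambda\cong H^k(\Sp(2g,\ZZ),V_\lambda)$ for the representation $V_\lambda$ underlying $\VV_\lambda$. The technical heart of the proof is a regularization theorem: in the stable range $k<g$, every cocycle has a representative by an $\Sp(2g,\RR)$-invariant $V_\lambda\otimes\CC$-valued differential form on $\HH_g$. This is proved by working on the Borel--Serre compactification $\BS$, which is homotopy equivalent to $\ab$; one constructs, starting from an arbitrary cocycle, a cohomologous one whose growth near the boundary strata of $\BS$ is controlled, and then averages to produce invariance. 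The outcome is an identification
\begin{equation*}
\cohvi[k]\ab\lambda\otimes\CC \;\cong\; H^k(\mathfrak{sp}_{2g,\RR},\U(g);V_\lambda\otimes\CC) \quad \text{for } k<g.
\end{equation*}

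\emph{Step 2: Invariant forms and the compact dual.} By Cartan's duality between compact and non-compact Riemannian symmetric spaces, $\Sp(2g,\RR)$-invariant forms on $\HH_g$ correspond to invariant forms on the compact dual, and hence
\begin{equation*}
H^\bullet(\mathfrak{sp}_{2g,\RR},\U(g);\CC)\;\cong\; H^\bullet(Y_g,\CC).
\end{equation*}
Under this chain of isomorphisms the Chern class $u_i$ of the tautological subbundle $E$ pulls back to $\lambda_i$. By Theorem~\ref{thm:tautringcompactdual}, $H^\bullet(Y_g,\ZZ)\cong R_g$, generated by $u_1,\dots,u_g$ modulo the single relation (\ref{equ:basicrel}), which by (\ref{equ:Chernchar}) is equivalent to $\ch_{2k}(E)=0$ for all $k\geq 1$. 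Passing to the stable limit $g\to\infty$, this relation expresses each even $\lambda_{2k}$ as a polynomial in $\lambda_1,\lambda_3,\lambda_5,\dots$, and the odd classes become algebraically independent, yielding the freely generated graded algebra described in the theorem.

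\emph{Step 3: Local systems without constant summand.} For an irreducible $V_\lambda\neq \QQ$, the same two-step reduction identifies $\cohvi[k]\ab\lambda\otimes\CC$ with $H^k(\mathfrak{sp}_{2g,\RR},\U(g);V_\lambda\otimes\CC)$ in the stable range. The latter can be analyzed via the Vogan--Zuckerman classification of unitary representations with non-zero $(\mathfrak{g},K)$-cohomology, and vanishes identically in degree $<g$ whenever the representation has no trivial summand. The general case follows by decomposing $V\otimes\CC$ into irreducibles.

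The principal obstacle is Step~1: making the regularization argument on $\BS$ precise and, crucially, obtaining the sharp stability range $k<g$ rather than a weaker explicit bound. Borel's original argument in \cite{borel1,borel2} gave a somewhat smaller range, and the refinement to $k<g$ quoted here relies on a careful choice of growth conditions for the differential forms near the boundary, made explicit for $\Sp(2g,\ZZ)$ in \cite[Thm.~3.2]{hain-infinitesimal}. Once this analytic input is in place, Steps~2 and~3 are essentially formal consequences of the structure theory of $R_g$ and of relative Lie algebra cohomology.
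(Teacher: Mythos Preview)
The paper does not give its own proof of this theorem; it is stated as a result of Borel \cite{borel1,borel2}, preceded only by a short explanatory paragraph summarizing the strategy. Your sketch follows that summary closely: the paper also singles out the identification of stable cohomology with invariant forms, the role of growth conditions near the boundary of the Borel--Serre compactification, and the sharpening of the range via \cite[Thm.~3.2]{hain-infinitesimal}. So at the level of approach there is nothing to compare --- your outline \emph{is} Borel's method as the paper describes it.

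One small point on Step~3: invoking the Vogan--Zuckerman classification is unnecessary and slightly misdirected. After Step~1 you are computing $H^k(\mathfrak{sp}_{2g,\RR},\U(g);V_\lambda\otimes\CC)$ with $V_\lambda$ a non-trivial finite-dimensional irreducible $(\mathfrak{g},K)$-module, not a unitary infinite-dimensional one. This vanishes in \emph{all} degrees (not just $k<g$) by the standard Casimir argument: the Casimir acts on $V_\lambda$ by a non-zero scalar, forcing the relative Lie algebra cohomology to be zero. Vogan--Zuckerman is about which unitary representations $\pi_\infty$ have non-trivial $(\mathfrak{g},K)$-cohomology against a given $V_\lambda$, which is a different and much deeper question (and indeed is used elsewhere in the paper, in the appendix). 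Replacing that sentence with the Casimir vanishing makes Step~3 both correct and elementary.
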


The classical result above gives the best known bound for the stability range, with one important exception, that of so called \emph{regular} local systems, i.e. the $\VV_{\lambda}$ with indices $\lambda_i$ all distinct and positive. Namely, according to Saper's theory of $\mathcal{L}$-sheaves on the reductive Borel--Serre compactification, we have
\begin{thm}[{\cite{saper1}}]
If $\lambda_1>\dots>\lambda_g>0$, then $\cohvi[k]{\ab}{\lambda_1,\dots,\lambda_g}$ vanishes for $k<\dim_\CC\ab$.
\end{thm}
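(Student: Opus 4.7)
The plan is to combine Saper's theory of $\mathcal{L}$-sheaves on the reductive Borel--Serre compactification $\XRBS$ with the representation-theoretic decomposition of $L^2$-cohomology. The first and main step is to establish, for a regular highest weight $\lambda$ with $\lambda_1 > \dots > \lambda_g > 0$, the isomorphism
\begin{equation*}
H^{\bullet}(\ab, \VV_\lambda) \;\cong\; H^{\bullet}_{(2)}(\ab, \VV_\lambda).
\end{equation*}
This is the substantive content of Saper's $\mathcal{L}$-module theorem. Letting $j:\ab \hookrightarrow \XRBS$ be the open inclusion, one shows that $Rj_*\VV_\lambda$ coincides with the middle-perversity intersection cohomology complex of $\XRBS$ with coefficients in $\VV_\lambda$. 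The verification proceeds stratum by stratum: each boundary stratum of $\XRBS$ corresponds to a proper rational parabolic $P \subset \Sp(2g)$, and the stalks of $Rj_*\VV_\lambda$ there are governed by nilpotent Lie algebra cohomology $H^{\bullet}(\mathfrak{n}_P, V_\lambda)$. By Kostant's theorem these decompose into Weyl-group summands with explicit weights, and the strict regularity of $\lambda + \rho$ ensures that every summand satisfies the microlocal vanishing required for middle perversity at that stratum. Combined with Saper's isomorphism $IH^{\bullet}(\XRBS, \VV_\lambda) \cong IH^{\bullet}(\XBB, \VV_\lambda)$ and the local-system version of Zucker's conjecture, this yields the desired identification with $L^2$-cohomology.

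Once this isomorphism is in hand, the second step applies Matsushima's formula
\begin{equation*}
H^{\bullet}_{(2)}(\ab, \VV_\lambda) \;\cong\; \bigoplus_\pi m(\pi)\, H^{\bullet}(\mathfrak{g}, K; \pi \otimes V_\lambda),
\end{equation*}
where $\pi$ runs over the irreducible unitary representations of $\Sp(2g,\RR)$ occurring in the discrete spectrum of $L^2(\Gamma_g \backslash \Sp(2g,\RR))$ and $m(\pi)$ denotes the multiplicity. The Vogan--Zuckerman classification of cohomological representations, together with the equal-rank property of $\Sp(2g,\RR)$, forces, for a regular infinitesimal character coming from $V_\lambda$, that the only nontrivial contributions come from discrete series representations; moreover each such contribution lies exclusively in the middle degree $d = \dim_\CC \ab = g(g+1)/2$. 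Combined with the first step this gives $H^k(\ab, \VV_\lambda) = 0$ for every $k \neq d$, and in particular for all $k < d$.

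The main obstacle is the first step, which is the deep part of Saper's work. Matching $Rj_*\VV_\lambda$ with the IC complex requires careful bookkeeping of the weights of the Kostant--Weyl summands against the perversity thresholds at every boundary stratum of $\XRBS$ simultaneously; the regularity hypothesis $\lambda_1 > \cdots > \lambda_g > 0$ is exactly what forces all of these numerical inequalities to hold at once. By contrast, the second step is essentially representation-theoretic bookkeeping once the equal-rank case of $\Sp(2g,\RR)$ and the Vogan--Zuckerman description of $(\mathfrak{g},K)$-cohomology are invoked.
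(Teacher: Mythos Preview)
There is a genuine gap: the isomorphism you claim in step 1, namely $H^{\bullet}(\ab, \VV_\lambda) \cong H^{\bullet}_{(2)}(\ab, \VV_\lambda)$ for regular $\lambda$, is false already for $g=1$. With $\lambda_1 = 2k > 0$ the Eichler--Shimura isomorphism gives $\dim H^1(\ab[1], \VV_{2k}) = 2\dim S_{2k+2} + 1$, the extra dimension coming from the Eisenstein series of weight $2k+2$, whereas $H^1_{(2)}(\ab[1], \VV_{2k})$ has dimension $2\dim S_{2k+2}$ since for regular infinitesimal character only discrete series (hence only cusp forms) contribute. Equivalently, $Rj_*\VV_\lambda$ is \emph{not} the intermediate extension on $\XRBS$: at the cusp its degree~$1$ stalk cohomology is the space of coinvariants $(\Sym^{2k}\CC^2)_N \cong \CC$, which violates the support condition for middle perversity. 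The same obstruction persists in higher genus. Kostant's theorem guarantees $H^k(\mathfrak{n}_P, V_\lambda) \neq 0$ for \emph{every} $0 \leq k \leq \dim\mathfrak{n}_P$, so regularity of $\lambda+\rho$ cannot force the stalk truncation you need --- it only determines which irreducible Levi-constituents appear, not the degrees in which they sit.

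The paper gives no proof of its own but cites Saper's general vanishing theorem. Saper's argument does not pass through $L^2$-cohomology: his $\mathcal{L}$-module formalism is a combinatorial model for $Rj_*\VV_\lambda$ on $\XRBS$, and vanishing below middle degree is obtained from a micro-support estimate on that model. Regularity of $\lambda$ enters to bound the degrees in which the Kostant--Weyl constituents can contribute to \emph{global} hypercohomology, not to kill boundary stalks. Your step~2 is correct and does give $H^k_{(2)} = 0$ for $k \neq d$, but without the failed identification of step~1 this says nothing about $H^k(\ab, \VV_\lambda)$.
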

The theorem above is a special case of \cite[Theorem 5]{saper1}, which holds for all quotients of a Hermitian symmetric domain or equal-rank symmetric space. Expectedly, Saper's techniques can be employed to give better bounds for the vanishing of certain classes of non-trivial local systems as well.

The fact that cohomology with values in non-trivial local systems vanishes implies that in small degree, it is easy to describe the cohomology of spaces that are fibered over $\ab$. In particular, Theorem~\ref{thm:stable} implies that the cohomology of the universal family $\calX_g\rightarrow \ab$ and that of its fiber products stabilize:
\begin{thm}[{\cite[Thm. 6.1]{grhuto}}]\label{thm:stableuniv}
  For all $n$, the rational cohomology of the $n$th fibre product $\calX_g^{\times n}$ of the universal family stabilizes in degree $k<g$ and in this range it is isomorphic to the free $\coh{\ab[\infty]}$-algebra generated by the classes $T_i:= p_i^*(\Theta)$ and $P_{jk}:=p_{jk}^*(P)$ for $i=1,\dots,n$ and $1\leq j<k\leq n$, where
  $$p_i:\;\calX_g^{\times n}\rightarrow \calX_g,
  \ \ \ 
  p_{jk}:\;\calX_g^{\times n}\rightarrow \calX_g^{\times 2}$$
  are the projections and 
  we denote by $\Theta\in \coh[2]{\calX_g}$ and $P\in \coh[2]{\calX_g^{\times 2}}$  the class of the universal theta divisor and of the universal Poincar\'e divisor normalized along the zero section, respectively.
\end{thm}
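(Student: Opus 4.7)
The strategy is to use the Leray spectral sequence for the projection $\pi:\calX_g^{\times n}\to\ab$, whose fiber over $[A]$ is $A^{\times n}$. The higher direct image $R^q\pi_*\QQ$ corresponds to the orbifold local system attached to the $\Sp(2g,\ZZ)$-representation $\wedge^q(V_1^{\oplus n})$, where $V_1$ denotes the standard $2g$-dimensional symplectic representation, identified with $H^1(A,\QQ)$ via the principal polarization. The $E_2$-page therefore reads $E_2^{p,q}=H^p(\ab,\wedge^q(\VV_1^{\oplus n}))$.

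The next step is to apply Borel's vanishing (Theorem~\ref{thm:stable}): for $p<g$ the group $H^p(\ab,\VV_\lambda)$ vanishes unless $\VV_\lambda$ contains a trivial $\Sp(2g)$-summand, and each trivial summand contributes a copy of $H^p(\ab[\infty],\QQ)$. Consequently, in the stable range $p+q<g$ one has
\begin{equation*}
E_2^{p,q}\cong H^p(\ab[\infty],\QQ)\otimes \bigl(\wedge^q(V_1^{\oplus n})\bigr)^{\Sp(2g)}.
\end{equation*}
By the first fundamental theorem of invariant theory for symplectic groups, the invariants in $\wedge^\bullet(V_1^{\oplus n})$ are generated by the degree-two classes $\omega_{ij}$ for $1\leq i\leq j\leq n$, where $\omega_{ii}$ is the intrinsic symplectic form on the $i$-th copy of $V_1$ and $\omega_{ij}$ for $i<j$ is the canonical symplectic pairing between the $i$-th and $j$-th copies. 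In the stable range these classes are algebraically independent, so the invariant ring is the polynomial algebra on the $\binom{n+1}{2}$ generators $\omega_{ij}$.

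It remains to identify these invariants geometrically and to check degeneration. A direct de Rham computation, using the description of $\Theta$ as the first Chern class of a relative polarization and of $P$ as the first Chern class of the Poincar\'e bundle normalized along the zero section, identifies $\omega_{ii}$ with $T_i=p_i^*\Theta$ and $\omega_{jk}$ with $P_{jk}=p_{jk}^*P$. Since all these classes and the odd $\lambda$-classes of Theorem~\ref{thm:stable} are honest cohomology classes on $\calX_g^{\times n}$, they define an algebra map from the free graded-commutative $H^{\bullet}(\ab[\infty],\QQ)$-algebra on the $T_i$ and $P_{jk}$ into $H^{\bullet}(\calX_g^{\times n},\QQ)$ which is compatible with the Leray filtration. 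Comparing with the computation of $E_2^{p,q}$ above shows that this map induces an isomorphism on the associated graded in degree $<g$; hence the spectral sequence degenerates at $E_2$ in the stable range and the map is itself an isomorphism there.

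The main technical obstacle I expect is the careful control of the stable behaviour of symplectic invariants in the exterior algebra — specifically, the algebraic independence of the $\omega_{ij}$ once $g$ is large compared to $n$ and to the cohomological degree — rather than any individual geometric identification.
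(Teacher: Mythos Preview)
Your proposal is correct and follows precisely the approach the paper indicates: the paper does not spell out a proof but remarks that Theorem~\ref{thm:stable} (Borel's vanishing for non-trivial local systems) ``implies that in small degree, it is easy to describe the cohomology of spaces that are fibered over $\ab$'', and later refers back to this argument as the model for the Leray spectral sequence computation of the cohomology of toroidal strata. Your Leray spectral sequence combined with Borel vanishing and the symplectic first fundamental theorem is exactly this method, and your degeneration argument via lifting the invariants to global classes $T_i$, $P_{jk}$ is the standard way to conclude.
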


The next natural question is whether cohomological stability also holds  for compactifications of $\ab$. This is of particular relevance useful for families of compactifications to which the product maps $\pr:\;\ab[g_1]\times\ab[g_2]\rightarrow \ab[g_1+g_2]$ extend. The analogue of Question~\ref{q:stab} for the Baily--Borel--Satake compactification was settled already in the 1980's by Charney and Lee:
\begin{thm}[{\cite{chle}}]
  The rational cohomology of $\Sat$ stabilizes in degree $k<g$. in this range, the cohomology is isomorphic to the polynomial algebra
  $$
  \QQ[x_2,x_6,\dots x_{4i+2},\dots] \otimes \QQ[y_6,y_{10},\dots y_{4j+2},\dots]
  $$
  generated by classes $x_{4i+2}$ ($i\geq 0$) and $y_{4j+2}$ ($j\geq 1$) of degree $4i+2$ and $4j+2$ respectively.
\end{thm}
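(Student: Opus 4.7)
The plan is to build a stable simplicial model of $\Sat$ from its stratification and reduce the computation to Borel's stability theorems for the arithmetic groups $\Sp(2g,\ZZ)$ and $\GL(n,\ZZ)$. The strategy parallels the proof of Theorem \ref{thm:stable}: replace $\Sat$ by a combinatorial resolution whose $E_1$ one can compute directly, and match the result to the asserted polynomial algebra.

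First I would construct a semi-simplicial space $Y_\bullet$ resolving $\Sat$ whose $p$-simplices are indexed by chains $0\leq k_0<k_1<\cdots<k_p=g$ of boundary genera, with
\begin{equation*}
Y_p\;=\;\bigsqcup_{k_0<\cdots<k_p=g}\;\ab[k_0]\times B\GL(k_1-k_0,\ZZ)\times\cdots\times B\GL(k_p-k_{p-1},\ZZ).
\end{equation*}
The $\GL$-factors arise because the link of the stratum $\ab[k]\subset\Sat$ is fibered by an arithmetic quotient of a Siegel parabolic whose Levi factor contains $\GL(g-k,\ZZ)$, with unipotent radical invisible in rational stable cohomology. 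The realization $|Y_\bullet|$ then admits a natural map to $\Sat$ which I would verify induces a rational cohomology isomorphism in degrees $k<g$.

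Next I would analyze the skeletal spectral sequence of $|Y_\bullet|$. Borel's theorem for $\Sp$ (Theorem \ref{thm:stable}) gives that $H^*(B\Sp(2g,\ZZ),\QQ)$ stabilizes to the polynomial algebra on $\lambda_1,\lambda_3,\ldots$ in degrees $4i+2$, and Borel's companion theorem for $\GL$ gives that $H^*(B\GL(n,\ZZ),\QQ)$ stabilizes to the exterior algebra on Borel classes $\beta_j$ in odd degrees $4j+1$. In the stable range the $E_1$ page of the spectral sequence is then the normalized bar complex $B(\QQ,H^*(B\GL),H^*(B\Sp))$, which because the actions are rationally trivial reduces to a tensor product. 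Bar-resolving an exterior algebra on degree $4j+1$ generators produces a polynomial algebra on suspended generators $y_{4j+2}$ of degree $4j+2$, and tensoring with the already-polynomial $\Sp$-part gives the claimed
\begin{equation*}
\QQ[x_2,x_6,\ldots]\otimes\QQ[y_6,y_{10},\ldots]
\end{equation*}
with $x_{4i+2}:=\lambda_{2i+1}$.

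The main obstacle is the comparison step: one has to verify that $|Y_\bullet|\to\Sat$ is a rational cohomology equivalence in degree $<g$. Concretely this requires showing that the unipotent radicals of the symplectic parabolics (Heisenberg-type nilpotent groups) contribute nothing to stable rational cohomology, that the resulting link fibrations split off the expected $B\GL$-factor, and that the gluing across strata assembles into the bar-construction structure used in the spectral sequence. Once these points are settled, the passage from the Borel classes to the stated polynomial algebra is a formal exercise in bar complexes and suspension, and the degrees match tautologically.
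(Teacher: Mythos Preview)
Your strategy is reasonable but differs from Charney--Lee's original argument, which is what the paper summarizes after the theorem statement. They do not resolve $\Sat$ via its stratification; instead they replace it by the geometric realization $|W_g|$ of Giffen's category arising from Hermitian $K$-theory, prove this is a rational cohomology equivalence, and then realize the stable limit $|W_\infty|$ as the \emph{base} of a fibration with total space rationally $B\Sp(\infty,\ZZ)$ and fibre rationally $B\GL(\infty,\ZZ)$. The Serre spectral sequence of that fibration does the work: the $\lambda_{2i+1}$ lift from the total space to give the $x_{4i+2}$, and the odd Borel classes on the fibre transgress to the $y_{4j+2}$; the stability bound $k<g$ is proved directly for $|W_g|$, independently of Borel's theorems. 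Your direct stratification approach is much closer in spirit to Chen--Looijenga's later reworking \cite{chlo}, discussed in the paragraph preceding Theorem~\ref{thm:stabsat}: they use the Leray spectral sequence of $\ab\hookrightarrow\Sat$ together with the fact that neighbourhoods of boundary points are virtual classifying spaces for parabolic subgroups fibred over $\GL(g-k,\ZZ)$. The geometric route buys access to the Hodge type of the $y$-classes, which the $K$-theoretic argument obscures; the cost is exactly the obstacle you flag, and in particular your identification of the $E_1$-page with the bar complex $B(\QQ,H^*(B\GL),H^*(B\Sp))$ would require a careful check of what the face maps of $Y_\bullet$ actually induce and of the claimed $H^*(B\GL)$-module structure on $H^*(B\Sp)$.
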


It follows from Charney and Lee's construction that the classes $x_{4i+2}$ restrict to $\lambda_{2i+1}$ on $\ab$, whereas the classes $y_{4j+2}$  vanish on $\ab$.

The proof of the theorem above combines Borel's results about the stable cohomology of $\Sp(2g,\ZZ)$ and $\GL(n,\ZZ)$ with techniques from homotopy theory. First, it is proved that the rational cohomology of $\Sat$
is canonically isomorphic to the cohomology of the geometric realization of Giffen's category $W_g$, which arises from Hermitian $K$-theory. The limit for $g\geq \infty$ of these geometric realizations $\left|W_g \right|$ can then be realized as the base space of a fibration whose total space has rational cohomology isomorphic to $\coh{\Sp(\infty,\ZZ)}$ and whose fibres have rational cohomology isomorphic to $\coh{\GL(\infty,\ZZ)}$. This immediately yields the description of the generators of the stable cohomology of $\Sat$. The stability range is proved by looking directly 
at the stability range for Giffen's category. 
In particular, this part of the proof is independent of Borel's constructions and shows that the stability range for the cohomology of $\ab$ should indeed be $k<g$.

However, because of the fact that Charney and Lee replace $\Sat$ with its $\QQ$-homology equivalent space $\left|W_{g}\right|$, the geometric meaning of the $x$- and $y$-classes remains unclear. This gives rise to the following two questions:

\begin{qu}\label{qu:eduard1}
What is the geometrical meaning of the classes $y_{4j+2}$? In particular, what is their Hodge weight?
\end{qu}
\begin{qu}\label{qu:eduard2}
Is there a canonical way to lift $\lambda_{4i+2}$ from $\coh{\ab}$ to $\coh{\Sat}$ for $4i+2<g$?
\end{qu}

The answer to the first question was obtained recently by Chen and Looijenga in \cite{chlo}. Basically, in their paper they succeed in redoing Charney--Lee's proof using only algebro-geometric constructions. 
In particular, they work directly on $\Sat$ rather than passing to Giffen's category and study its rational cohomology by investigating the Leray spectral sequence associated with 
the inclusion $\ab\hookrightarrow\Sat$. The $E_2$-term of this spectral sequence can be described explicitly using the fact that each point in a stratum $\ab[k]\subset \Sat$ has an 
arbitrarily small neighbourhood which is a virtual classifying space for an arithmetic subgroup $P_g(k)\subset\Sp(2g,\ZZ)$ which is fibered over $\GL(g-k,\ZZ)$. In the stable range
this can be used to construct a spectral sequence converging to $\coh{\ab}$ with $E_2$-terms isomorphic to those in the spectral sequence considered by Charney and Lee. 
Furthermore, this algebro-geometric approach allows to describe explicitly the Hodge type of the $y$-classes. This is done by first giving a ``local'' interpretation of them as 
classes lying over the cusp $\ab[0]$ of $\Sat$, and then using the existence of a toroidal compactification to describe the Hodge type of the $y$-classes in the spirit of 
Deligne's Hodge theory \cite{deligne-hodgeIII}. This gives the following result:

\begin{thm}[{\cite[Theorem 1.2]{chlo}}] The $y$-classes have Hodge type $(0,0)$.
  \label{thm:stabsat}
\end{thm}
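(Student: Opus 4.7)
The plan is to follow the algebro-geometric strategy alluded to in the discussion preceding the theorem: rather than appealing to Giffen's category, work directly with $\Sat$, locate the $y$-classes in concrete local terms at the boundary, and then pin down their Hodge type by pulling everything back to a toroidal compactification so that Deligne's mixed Hodge theory applies.

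First I would set up the Leray spectral sequence for the open embedding $j:\ab\hookrightarrow\Sat$ and study its $E_2$-page in the stable range. Along each Baily--Borel stratum $\ab[k]\subset\Sat$ the stalks of $R^qj_*\QQ$ are computed by the cohomology of a small punctured neighbourhood, which is a virtual classifying space for $P_g(k)\subset\Sp(2g,\ZZ)$ fibered over $\GL(g-k,\ZZ)$. Borel's stability results for both symplectic and linear groups then identify this $E_2$-page, in the stable range, with the Charney--Lee $E_2$-page. Comparing the two spectral sequences produces the same polynomial generators: the $x_{4i+2}$ come from the symplectic Levi (and restrict to $\lambda_{2i+1}$ on $\ab$), while the $y_{4j+2}$ are governed by the linear Levi and are concentrated at the deepest cusp $\ab[0]$, which explains why they vanish on $\ab$.

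Next I would compute the Hodge weight. Choose a smooth toroidal compactification $p:\STor\to\Sat$ with normal crossings boundary $D=\STor\setminus\ab$, and transport $y_{4j+2}$ to a class $p^*y_{4j+2}\in H^{4j+2}(\STor,\QQ)$. Since $y_{4j+2}$ pulls back to zero on $\ab\subset\STor$, the class $p^*y_{4j+2}$ lies in the kernel of the restriction $H^{\bullet}(\STor,\QQ)\to H^{\bullet}(\ab,\QQ)$, hence in the image of the corresponding Gysin morphism from $D$. Deligne's weight spectral sequence for the open variety $\ab\subset\STor$ is then my computational tool: its $E_1$-page is built from the cohomologies of the strata of $D$ with suitable Tate twists, and the pure weight-zero piece of the induced Hodge structure on $H^{\bullet}(\STor,\QQ)$ corresponds to classes pulled back from the dual (nerve) complex of $D$. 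I would show that $p^*y_{4j+2}$ lies precisely in this nerve-cohomology piece by using the toric description of a neighbourhood of $\ab[0]$ in $\STor$: the cone decomposition of $\Sym^2_{\rc}(\RR^g)$ provides an explicit simplicial model of the nerve, on which Borel's $\GL(\infty,\ZZ)$-generators are realized by combinatorial cocycles.

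The main obstacle is this last identification. Naturality makes it straightforward to see that $p^*y_{4j+2}$ is supported on $D$, but supported classes can in principle acquire weight $(k,k)$ with $k\geq 1$ coming from algebraic cycles on the strata of $D$. Showing that all such contributions vanish, and that $y_{4j+2}$ is genuinely a nerve class, is the heart of the argument; it is here that the explicit matching between the transgression producing $y_{4j+2}$ from $H^{\bullet}(\GL(\infty,\ZZ),\QQ)$ at the $\ab[0]$-cusp and the combinatorial model of the toroidal boundary does the real work. Once this matching is in place, Deligne's construction gives the weight and type assertions for free, and $y_{4j+2}$ is of pure Hodge type $(0,0)$.
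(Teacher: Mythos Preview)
Your overall outline matches what the paper describes as Chen--Looijenga's strategy: replace Charney--Lee's homotopy-theoretic argument by the Leray spectral sequence for $j:\ab\hookrightarrow\Sat$, identify the $y$-classes as classes localized at the deepest cusp $\ab[0]$, and then bring in a toroidal compactification and Deligne's Hodge theory. Since the paper itself only sketches this and refers to \cite{chlo} for the proof, your first two paragraphs are in line with the intended argument.

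However, the third paragraph contains a genuine gap. You propose to compute the Hodge type of $y_{4j+2}$ by studying $p^*y_{4j+2}\in H^{4j+2}(\STor,\QQ)$ and locating it in a ``pure weight-zero piece'' of this group. But $\STor$ is smooth and projective, so $H^{4j+2}(\STor,\QQ)$ is a \emph{pure} Hodge structure of weight $4j+2$; for $j\geq 1$ its weight-zero part is zero. Since $p^*$ is a morphism of mixed Hodge structures and is strict for the weight filtration, the statement $y_{4j+2}\in W_0$ is \emph{equivalent} to $p^*y_{4j+2}=0$, not to $p^*y_{4j+2}$ landing in some nontrivial nerve piece. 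Your Gysin argument points the same way: the Gysin map from the strata of $D$ into $H^{\bullet}(\STor,\QQ)$ involves Tate twists by $\QQ(-c)$ with $c\geq 1$, so its image certainly has no weight-zero part. In short, once you pull back to $\STor$ the class you want to analyse has vanished, and no amount of work on $\STor$ will recover its Hodge type.

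What one actually needs is a model that computes the mixed Hodge structure on $H^{\bullet}(\Sat,\QQ)$ itself, not on $H^{\bullet}(\STor,\QQ)$. The toroidal compactification enters not as a target for pullback but as input for such a model (for instance via cohomological descent for the proper surjection $p$, or via the local description of a deleted neighbourhood of $\ab[0]$ in $\Sat$ in toroidal terms). It is in \emph{that} setting that the nerve of the toroidal boundary over $\ab[0]$ contributes the weight-zero graded piece, and the identification of the $y$-classes with combinatorial cocycles on the fan then yields Hodge type $(0,0)$. Your ``local interpretation at $\ab[0]$'' is the right instinct; the error is routing it through $H^{\bullet}(\STor,\QQ)$ rather than through a computation of $\op{Gr}^W_0 H^{\bullet}(\Sat,\QQ)$.
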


Concerning Question~\ref{qu:eduard2}, it is clear that while the $y$-classes are canonically defined, the $x$-classes are not. On the other hand, Goresky and Pardon \cite{GoreskyPardon}
defined \emph{canonical} lifts of the $\lambda$-classes from $\lambda_i\in\cohloc{\pu}{\ab}\CC$ to $\lambda^{\op{GP}}_i\in\cohloc{\pu}{\Sat}\CC$. As the pull-back of $x_{4i+2}$ and of $\lambda^{\op{GP}}_{2i+1}$ to any smooth toroidal compactification of $\ab$ coincide, one wonders whether the two classes may coincide. This question was settled in the negative by Looijenga in \cite{lo-pardon}, who studied the properties of the Goresky--Pardon classes putting them in the context of the theory of stratified spaces. We summarize the main results about stable cohomology from \cite{lo-pardon} as follows
\begin{thm}[\cite{lo-pardon}]
  \label{thm:lo-pardon}
  For $2<4r+2<g$, the Goresky--Pardon lift of the degree $2r+1$ Chern character of the Hodge bundle has a non-trivial imaginary part and its real class lies in $\coh[4r+2]{\Sat}$. In particular, the classes $\lambda^{\op{GP}}_{2r+1}$ are different from the $x_{4r+2}$ in Theorem~\ref{thm:stabsat}.
\end{thm}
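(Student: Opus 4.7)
My approach is to compare the Goresky--Pardon class $\lambda^{\op{GP}}_{2r+1}\in\cohloc{4r+2}{\Sat}{\CC}$ directly with the Charney--Lee class $x_{4r+2}\in\coh[4r+2]{\Sat}$. Both restrict to $\lambda_{2r+1}$ on $\ab$: for $x_{4r+2}$ this is Charney--Lee's construction, and for $\lambda^{\op{GP}}_{2r+1}$ it follows from the identity $c_i(\tilde{\EE})=p^\ast\lambda^{\op{GP}}_i$ recalled in Section~\ref{sec:proportionality}. Hence the difference
\begin{equation*}
\delta:=\lambda^{\op{GP}}_{2r+1}-x_{4r+2}\in\cohloc{4r+2}{\Sat}{\CC}
\end{equation*}
restricts to zero on $\ab$. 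By Chen--Looijenga (Theorem~\ref{thm:stabsat}), in the stable range $4r+2<g$ the stable cohomology $\coh[\bullet]{\Sat}$ is the tensor product $\QQ[x_2,x_6,\dots]\otimes\QQ[y_6,y_{10},\dots]$, and the restriction to $\ab$ identifies the pure-$x$ subring with the (stably polynomial) tautological ring while killing the ideal generated by the $y$-classes. Therefore $\delta$ must lie in that $y$-ideal, so
\begin{equation*}
\delta=\alpha\,y_{4r+2}+(\text{decomposable monomials with a $y$-factor}),\qquad\alpha\in\CC,
\end{equation*}
and the theorem reduces to showing $\operatorname{Im}(\alpha)\neq 0$.

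To pin down $\alpha$, I would localise the Goresky--Pardon construction near the deepest cusp $\ab[0]\in\Sat$, where the $y_{4r+2}$-direction is detected by the link cohomology. The class $\lambda^{\op{GP}}_{2r+1}$ has a canonical representative obtained from Chern--Weil forms for the natural Hodge metric on $\EE$, extended across the singular boundary in a controlled way. A small neighbourhood of $\ab[0]$ is a virtual classifying space of the parabolic $P_g(0)\subset\Sp(2g,\ZZ)$, which is fibered over $\GL(g,\ZZ)$, and on it the Chern--Weil form reduces---after peeling off contributions from smaller parabolic strata---to an explicit closed form on the $\GL(g,\ZZ)$-factor. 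Interpreting this form as the Borel regulator on $K_{2r+1}(\ZZ)$, via a Gauss--Bonnet-type integral against the Hodge metric, I expect $\alpha$ to come out as a non-zero rational multiple of $\zeta(2r+1)/(2\pi i)^{2r+1}$. Since $\zeta(2r+1)\in\RR\setminus\{0\}$, the factor $(2\pi i)^{-(2r+1)}$ forces $\operatorname{Im}(\alpha)\neq 0$; separating the real and imaginary parts then yields a non-trivial imaginary part together with $\operatorname{Re}\lambda^{\op{GP}}_{2r+1}\in\coh[4r+2]{\Sat}$, as required.

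The principal obstacle is this boundary Chern--Weil computation: one needs a representative of $\lambda^{\op{GP}}_{2r+1}$ refined enough to isolate the $y_{4r+2}$-component and to match the resulting transcendental constant to a specific nonzero zeta value, rather than to an unidentified transcendental integral. This demands a delicate asymptotic analysis of the Hodge metric along the boundary divisor of a toroidal resolution, combined with a stratified-spaces incarnation of Borel's regulator theorem. The hypothesis $2<4r+2<g$ is sharp for this strategy: the lower bound excludes the degree-two case, where no $y$-class exists and the two lifts necessarily coincide, while $4r+2<g$ places us inside the Chen--Looijenga stable range, keeping the structure of $\coh{\Sat}$ under control.
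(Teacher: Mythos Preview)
Your overall strategy---show that the Goresky--Pardon lift differs from the rational $x$-class by something in the $y$-direction, then evaluate that coefficient via a regulator and a zeta value---matches the paper's in spirit. But the execution in \cite{lo-pardon} differs in two respects that are worth noting.

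First, the paper works not with $\lambda^{\op{GP}}_{2r+1}$ itself but with the Chern character $\op{ch}^{\op{GP}}_{2r+1}$, because this class is \emph{primitive} for the Hopf algebra structure on stable cohomology. The primitive part of $\coh[4r+2]{\Sat}$ is two-dimensional, spanned by $\op{ch}^{\op{GP}}_{2r+1}$ and $y_{4r+2}$, so passing to primitives kills the ``decomposable monomials with a $y$-factor'' that you have to carry along; the problem becomes a computation of a single extension class of mixed Hodge structures, namely $\QQ(-(2r+1))$ by $\QQ(0)$. The statement about $\lambda^{\op{GP}}_{2r+1}\ne x_{4r+2}$ is then a corollary.

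Second, rather than the asymptotic Chern--Weil analysis near the cusp that you correctly identify as the hard step, the paper evaluates the extension by pairing $\op{ch}^{\op{GP}}_{2r+1}$ against an explicit rational \emph{homology} class $z\in H_{4r+2}(\Sat,\QQ)$ that Chen--Looijenga produced in \cite{chlo} and that pairs non-trivially with $y_{4r+2}$. This pairing is computed via the Beilinson regulator rather than Borel's (these are of course commensurable), and the answer comes out as a nonzero rational multiple of $\zeta(2r+1)/\pi^{2r+1}$. The existence of this ready-made homology class is exactly what bypasses the delicate boundary metric analysis you anticipate as the principal obstacle.
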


This is related to an explicit description of the Hodge structures on a certain subspace of stable cohomology.
Let us recall that an element $x$ of a Hopf algebra is called \emph{primitive} if its coproduct satisfies $\Delta(x)=x\otimes 1+1\otimes x$.
If one considers the Hopf algebra structure of stable cohomology of $\Sat$, the primitive part in degree $4r+2$ is generated by $y_{4r+2}$ and the Goresky--Pardon lift $\op{ch}^{\op{GP}}_{2r+1}$ of the Chern character, which is a degree $2r+1$ polynomial in the $\lambda_{j}^{\op{GP}}$ with $j\leq 2r+1$.
The proof of Theorem~\ref{thm:lo-pardon} is based on an explicit computation of the Hodge structures on this primitive part (\cite[Theorem 5.1]{lo-pardon}) obtained by 
using the theory of Beilinson regulator and the explicit description of the $y$-generators given in \cite{chlo}.
This amounts to describing ${\coh[4r+2]{\Sat}}_{\op{prim}}$ as an extension of a weight $4r+2$ Hodge structure generated by $\op{ch}_{2r+1}^{\op{GP}}$ by a weight $0$ Hodge structure, generated by
$y_{4r+2}$. 
Chen and Looijenga's explicit construction of $y_{2r+2}$ also yields a construction of a \emph{homology} class $z\in H_{4r+2}(\Sat,\QQ)$ that pairs non-trivially with $y_{4r+2}$. Hence, one can describe the class of the extension by computing the pairing of $\op{ch}_{2r+1}^{\op{GP}}$ with $z$. This computation is only up to multiplication by a non-zero rational number because of an ambiguity in the definition of $z$, but it is enough to show that the class of the extension is real and a non-zero rational multiple of $\frac{\zeta(2r+1)}{\pi^{2r+1}}$.

Let us mention that the question about stabilization is settled also for the reductive Borel--Serre compactification  $\ab^{\operatorname{RBS}}$ of $\ab$. Let us recall from Section~\ref{s:Ltwocohomology} that its cohomology is naturally isomorphic to the $L^2$-cohomology of $\ab$ and to  the intersection homology of $\Sat$. Combining this with Borel's Theorems \ref{thm:stable} and~\ref{teo:L2IHSat} we can obtain
\begin{thm} \label{thm:stable_IH}
  The intersection cohomology $\icoh[k]{\Sat}$ stabilizes in degree $k<g$ to the graded vector space $\QQ[\lambda_1,\lambda_3,\dots]$. 
  \end{thm}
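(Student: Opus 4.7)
The plan is to chain together three results already available in the paper: Zucker's conjecture, Theorem~\ref{teo:L2IHSat} comparing $L^2$-cohomology with ordinary cohomology of $\ab$ in low degree, and Borel's stability Theorem~\ref{thm:stable} for $\coh{\ab}$. For any $g$ and every $k<g$ one has the composition of canonical isomorphisms
\begin{equation*}
\icohloc{k}{\Sat}{\CC} \;\cong\; \cohtwoloc{k}{\ab}{\CC} \;\cong\; \cohloc{k}{\ab}{\CC},
\end{equation*}
the first from Zucker and the second from Theorem~\ref{teo:L2IHSat}. Both $\icoh[k]{\Sat}$ and $\coh[k]{\ab}$ carry canonical $\QQ$-structures whose $\CC$-tensorizations give the two sides, so the two $\QQ$-vector spaces have equal dimension. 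Combined with Theorem~\ref{thm:stable}, this forces $\dim_\QQ \icoh[k]{\Sat}$ to equal the dimension of the degree-$k$ part of $\QQ[\lambda_1,\lambda_3,\ldots]$ for every $k<g$.

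Next I would exhibit the generators explicitly. Proposition~\ref{prop:tautinih} provides an inclusion $R_g \hookrightarrow \icoh{\Sat}$, and composing with the natural restriction $\icoh{\Sat}\to \coh{\ab}$ to the smooth open part yields a factorization
\begin{equation*}
\QQ[\lambda_1,\lambda_3,\ldots]^k \;\longrightarrow\; \icoh[k]{\Sat} \;\longrightarrow\; \coh[k]{\ab},
\end{equation*}
whose composition is the stable inclusion of Theorem~\ref{thm:stable}. Here one uses the basic relation $\ch_{2j}(\EE)=0$ from Theorem~\ref{thm:tautological} to express even $\lambda$-classes as polynomials in odd ones, and the fact that no relation such as $\lambda_g=0$ is visible in cohomological degree $k<g$ (that relation only affects degrees $\geq 2g$). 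In the stable range the composition is thus an isomorphism; its source and target have the same $\QQ$-dimension by the previous paragraph, and hence the first arrow is an isomorphism as well.

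Finally one must check compatibility with stabilization. The product map $\ab \times \ab[1] \to \ab[g+1]$ extends to the Baily--Borel compactifications, so fixing a base point in $\ab[1]$ induces pullbacks $\icoh[k]{\Sat[g+1]} \to \icoh[k]{\Sat}$ which, under the canonical chain of isomorphisms above, intertwine with Borel's stabilization maps $s_g$ on $\coh{\ab}$. The main obstacle I anticipate is exactly this naturality: Zucker's isomorphism and Theorem~\ref{teo:L2IHSat} are each canonical, so functoriality should be formal, but verifying it for pullbacks between Baily--Borel compactifications of different dimensions is the one point requiring genuine care. The rest is a direct assembly of results already proved.
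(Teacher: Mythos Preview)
Your proposal is correct and follows essentially the same route as the paper: the paper derives the theorem in one sentence by combining Zucker's conjecture, Theorem~\ref{teo:L2IHSat}, and Borel's Theorem~\ref{thm:stable}, which is exactly your first paragraph. Your second and third paragraphs (exhibiting the $\lambda$-generators via Proposition~\ref{prop:tautinih} and checking compatibility with the stabilization maps) add detail that the paper leaves implicit, but the underlying argument is the same.
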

At this point we would  like to point out that the range in which intersection cohomology is tautological given in this theorem
can be improved considerably, namely to a wider range $k < 2g-2$, and also extended to non-trivial local systems $\VV_{\lambda}$, for which the 
intersection cohomology vanishes in the stable range. 
For details we refer to Theorem \ref{thm:sharper_stab_IH_Sat_level_one} in the appendix.

The analogue of Question~\ref{q:stab} for toroidal compactifications turns out to be a subtle question, which in this form remains open. 
We dealt with stability questions in a series of papers \cite{grhuto,grhuto2}, joint with Sam Grushevsky.

Let us recall that toroidal compactifications come in different flavours. The first question to answer is which choice of toroidal compactification is suitable in order to obtain stabilization phenomena in cohomology. At a theoretical level, this requires to work with a sequence of compactifications $\{\ab^{\Sigma_g}\}$ where each $\Sigma_g$ is an admissible fan in $\Sym^2_{\op{rc}}(\RR^g)$. Then the system of maps $\ab\rightarrow \ab[g+1]$ extends to the compactification if and only if $\{\Sigma_g\}$ is what is known as an \emph{admissible} collection of fans (see e.g. \cite[Def. 8]{grhuto2}). If one wants to ensure that the product maps $\ab[g_1]\times\ab[g_2]\rightarrow \ab[g_1+g_2]$ extend, one needs a stronger condition, which we shall call \emph{additivity}, namely that the direct sum of a cone $\sigma_1\in\Sigma_{g_1}$ and a cone $\sigma_2\in\Sigma_{g_2}$ should always be a cone  
in $\Sigma_{g_1+g_2}$.

All toroidal compactifications we mentioned so far are additive. However, only the perfect cone compactification is clearly a good candidate for stability. For instance, the second Voronoi decomposition is ruled out because the number of boundary components of $\Vor$ increases with $g$, so that the same should happen with $\coh[2]{\Vor}$. Instead, the perfect cone compactification has the property that for all $k\leq g$, the preimage of $\ab[g-k]\subset \Sat$ in $\Perf$ always has codimension $k$. In particular, the boundary of $\Perf$ is always irreducible.

Let us recall that a toroidal compactification associated to a fan $\Sym^2_{\rc}(\RR^g)$ is the disjoint union of locally closed strata $\stratum\sigma$ corresponding to the cones $\sigma\in\Sigma$ up to $\GL(g,\ZZ)$-equivalence.
Each stratum $\stratum\sigma$ has codimension equal to $\dim_\RR\sigma$
by which we mean the dimension of the linear space spanned by $\sigma$.
The {\em rank} of a cone $\sigma$ is defined as the minimal $k$ such that $\sigma$ is a cone in $\Sigma_k$.
If the rank is $k$,
then $\stratum\sigma$ maps surjectively to $\ab[g-k]$ under the forgetful morphism to $\Sat$.

The properties of the perfect cone decomposition  can be rephrased in terms of the fan by saying that if a cone $\sigma$ in the perfect cone decomposition has rank $k$, then its dimension is at least $k$. Moreover, the number of distinct $\GL(g,\ZZ)$-orbits of cones of a fixed dimension $\ell\leq g$ is independent of g. This means that the combinatorics of the strata $\stratum\sigma$ of codimension $\ell \leq k$ is independent of $g$ provided $g\geq k$ holds.
Furthermore,
studying the Leray spectral sequence associated to the fibration $\stratum\sigma\rightarrow\ab[g-r]$ with $r=\rank \sigma$ allows to prove that the cohomology of $\stratum\sigma$ stabilizes for $k<g-r-1$; this stable cohomology consists of algebraic classes and can be described explicitly in terms of the geometry of the cone $\sigma$ (see \cite[Theorem~8.1]{grhuto}).
The basic idea of the proof is analogous to the one used in Theorem~\ref{thm:stableuniv} to describe the cohomology of $\calX_g^n$.
All this suggests that $\Perf$ is a good candidate for stability. In practice, however, the situation is complicated by the singularities of $\Perf$.

Let us review the main results of \cite{grhuto,grhuto2} in the case of an arbitrary sequence $\{\Asigma\}$ of partial toroidal compactifications of $\ab$ associated with an admissible collection of (partial) fans $\ssigma=\{\Sigma_g\}$. Here we use the word \emph{partial} to stress the fact that we don't require the union of all $\sigma\in\Sigma_g$ to be equal to $\Sym^2_{\rc}(\RR^g)$. In other words, we are also considering the case in which $\Asigma$ is the union of toroidal open subsets of a (larger) toroidal compactification.
\begin{thm}\label{thm:simplicialcase}
  Assume that $\ssigma$ is an additive collection of fans and that each cone $\sigma\in\Sigma_g$ of rank at least $2$ satisfies
 \begin{equation} \label{eq:dimrank}
  \dim_\RR\sigma\geq \frac{\rank \sigma}2+1.
\end{equation}
Then if $\ssigma$ is simplicial we have that $\coh[k]{\Asigma}$ stabilizes for $k<g$ and that cohomology is algebraic in this range, in the sense that cohomology coincides with the image of the Chow ring.
Furthermore, there is an isomorphism 
$$
\coh[2\pu]{\Asigma[\infty]} \cong
\coh[2\pu]{\ab[\infty]} \otimes_\QQ \Sym^\pu(V_\ssigma)
$$
of free graded algebras between stable cohomology and the algebra over 
$$\coh[2\pu]{\ab[\infty]}=\QQ[\lambda_1,\lambda_3,\dots]$$
generated by the symmetric algebra of the graded vector space
spanned by the tensor products of forms in the $\QQ$-span of each cone $\sigma$
that are invariant under the action of the stabilizer $\Aut\sigma$ of $\sigma$,
for all cones $\sigma$ that are irreducible with respect to the operation of taking direct sums, i.e. 
$$
V_\ssigma^{2k}:=\bigoplus_{\substack{[\sigma]\in[\ssigma]\\ [\sigma]\text{ irreducible w.r.t.}\ \oplus}} \left( \Sym^{k-\dim_\RR\sigma}(\QQ\text{-span of }\sigma)\right)^{\Aut\sigma}
$$
where $[\ssigma]$ denotes the collection of the
orbits of cones in $\ssigma$ under the action of the general linear group.
\end{thm}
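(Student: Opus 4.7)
The plan is to combine the stratification of $\Asigma$ by toroidal strata with Borel's stability theorem via a spectral sequence in compactly supported cohomology, and then promote a vector space isomorphism to a Hopf algebra isomorphism using additivity. Concretely, $\Asigma$ decomposes as a disjoint union $\bigsqcup_{[\sigma]} \stratum\sigma$ indexed by $\GL(g,\ZZ)$-equivalence classes of cones $\sigma \in \Sigma_g$, where $\stratum\sigma$ has real codimension $2\dim_\RR\sigma$ and admits a fibration $\stratum\sigma \to \ab[g-\rank\sigma]$ whose fibre is governed by the toric geometry of $\sigma$. Since $\ssigma$ is simplicial, $\Asigma$ is rationally smooth, so Poincar\'e duality interchanges $\coh{\Asigma}$ and $\cohc{\Asigma}$.

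The first step is to analyze each stratum: via the Leray spectral sequence for $\stratum\sigma \to \ab[g-\rank\sigma]$ and Borel's Theorem~\ref{thm:stable} applied to the base together with the vanishing of cohomology with values in non-trivial local systems, one obtains that $\cohc{\stratum\sigma}$ stabilizes and is algebraic, its stable part being a free module over $\coh{\ab[\infty]}$ generated by the $\Aut\sigma$-invariants of $\Sym^\bullet(\QQ\text{-span of }\sigma)$ (this is \cite[Theorem~8.1]{grhuto}). One then assembles these contributions through the Gysin-type spectral sequence
\begin{equation*}
E_1^{p,q} = \bigoplus_{\substack{[\sigma]\in[\ssigma]\\ \dim_\RR\sigma=p}} \cohcloc{q-p}{\stratum\sigma}{\QQ}(-p) \Longrightarrow \cohcloc{p+q}{\Asigma}{\QQ}.
\end{equation*}
The dimension/rank hypothesis $\dim_\RR\sigma \geq \rank\sigma/2+1$ (for $\rank\sigma\geq 2$) is exactly what is needed to guarantee that, in total degree $<g$, every contributing $E_1$-term already lies in the stable range for the corresponding base $\ab[g-\rank\sigma]$. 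Simpliciality then ensures that all stable classes are algebraic (being built from Chern classes of toric bundles and the tautological classes on $\ab[g-\rank\sigma]$), so the spectral sequence degenerates for weight reasons and yields a stable vector-space description of $\coh{\Asigma[\infty]}$.

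The final step is to upgrade this to the asserted algebra isomorphism. Additivity of $\ssigma$ makes the product maps $\Asigma[g_1]\times \Asigma[g_2] \to \Asigma[g_1+g_2]$ well defined, so $\coh{\Asigma[\infty]}$ becomes a graded-commutative Hopf algebra. By Hopf's theorem it is freely generated by its primitives, and the K\"unneth formula identifies classes associated with a direct sum $\sigma_1\oplus\sigma_2$ with the product of the classes coming from $\sigma_1$ and $\sigma_2$ in the cohomology of $\Asigma[g_1]\times\Asigma[g_2]$. Hence only cones that are irreducible with respect to $\oplus$ contribute primitives, and within each irreducible $[\sigma]$ the primitive generators are precisely the $\Aut\sigma$-invariant elements of $\Sym^{\bullet-\dim_\RR\sigma}(\QQ\text{-span of }\sigma)$, matching the definition of $V_\ssigma$. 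The main technical obstacle is the second step: verifying that the dimension/rank inequality is sharp enough to force all potentially unstable entries in the Gysin spectral sequence to lie outside the range $p+q<g$, and simultaneously that the differentials into and out of stable entries vanish; this is where simpliciality (to avoid singular contributions) and the explicit algebraic nature of the stable classes (so differentials are constrained by weights) are essential.
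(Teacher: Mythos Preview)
Your outline is essentially the approach of \cite{grhuto,grhuto2} as sketched in the surrounding discussion of the survey: analyze each stratum $\stratum\sigma\to\ab[g-\rank\sigma]$ via Leray and Borel's theorem (this is exactly the cited \cite[Theorem~8.1]{grhuto}), assemble the strata by a Gysin spectral sequence using the dimension/rank inequality to stay in the stable range, and then use additivity and the Hopf algebra structure to identify the primitives with the $\oplus$-irreducible cones. Note that the paper itself is a survey and does not give a self-contained proof of this theorem; it only records the result and the ingredients you list, so there is no independent argument in the paper to compare against beyond what you have reproduced.
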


As we already remarked, the perfect cone compactification satisfies a condition which is stronger than \eqref{eq:dimrank}, so any rationally smooth open subset of $\Perf$ which is defined by an additive fan satisfies the assumptions of the theorem. For instance, this implies that the theorem above applies to the case where $\Asigma$ is the smooth locus of $\Perf$ or the locus where $\Perf$ is rationally smooth.
A more interesting case that satisfies the assumptions of the theorem is the matroidal partial compactification $\Matr$ defined by the fan $\ssigma^{\op{Matr}}$ of cones defined starting from simple regular matroids. This partial compactification was investigated by Melo and Viviani~\cite{mevi} who showed that the matroidal fan with coincides the intersection $\Sigma_g^{\op{Matr}}=\Sigma_g^{\op{Perf}}\cap\Sigma_g^{\op{Vor}}$  of the perfect cone and second Voronoi fans, so that $\Matr$ is an open subset in both $\Perf$ and $\Vor$. Its geometrical significance is also related to the fact that the image of the extension of the Torelli map to the Deligne--Mumford stable curves is contained in $\Matr$, as shown by Alexeev and Brunyate in \cite{albr}.

Furthermore, by \cite[Prop. 19]{grhuto2}, rational cohomology stabilizes also without the assumption that $\ssigma$ be additive. In this case stable cohomology is not necessarily a free polynomial algebra, but it still possesses an explicit combinatorial description as a graded vector space.

If $\ssigma$ is not necessarily simplicial, it is not known whether cohomology stabilizes in small degree. However, one can prove that cohomology (and homology) stabilize in \emph{small codegree}, i.e. close to the top degree $g(g+1)$. The most natural way to phrase this is to look at Borel--Moore homology of $\Asigma$, because this is where the cycle map from the Chow ring of $\Asigma$ takes values if $\Asigma$ is singular and possibly non-compact. (If $\Asigma$ is compact, then Borel--Moore homology coincides with the usual homology).

If $\ssigma$ is additive, it is possible to prove (see \cite[Prop. 9]{grhuto2}) that the product maps extend, after going to a suitable level structure, to a transverse embedding $\Asigma[g_1]\times\Asigma[g_2]\rightarrow\Asigma[g_1+g_2]$. In particular, taking products with a chosen point $E\in\ab[1]$ defines a transverse embedding (in the stacky sense) of $\Asigma$ in $\Asigma[g+1]$. It makes sense to wonder in which range the Gysin maps
$$
\BM[(g+1)(g+2)-k]{\Asigma[g+1]}\rightarrow\BM[g(g+1)-k]{\Asigma[g]}
$$
are isomorphisms. Then Theorem~\ref{thm:simplicialcase} generalizes to

\begin{thm}\label{thm:singularcase}
  If $\ssigma$ is an additive collection of (partial) fans such that each cone of rank $\geq 2$ satisfies \eqref{eq:dimrank}, then the Borel--Moore homology of $\Asigma$ stabilizes in codegree $k<g$ and the stable homology classes lie in the image of the cycle map.

  Furthermore, there is an isomorphism of graded vector spaces
$$
V_\ssigma^{2k}:=\bigoplus_{\substack{[\sigma]\in[\ssigma]\\ [\sigma]\text{ irreducible w.r.t.}\ \oplus}} \left( \Sym^{k-\dim_\RR\sigma}(\QQ\text{-span of }\sigma)\right)^{\Aut\sigma}.
$$
\end{thm}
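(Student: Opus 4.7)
The plan is to mirror the argument for the simplicial case (Theorem~\ref{thm:simplicialcase}) while systematically replacing rational cohomology with Borel--Moore homology, since $\Asigma$ is no longer assumed rationally smooth and the cycle map from $\CH_\QQ^\bullet(\Asigma)$ now naturally targets Borel--Moore homology. The main tool is the Gysin spectral sequence in Borel--Moore homology associated with the stratification
\begin{equation*}
\Asigma=\bigsqcup_{[\sigma]\in[\Sigma_g]}\beta_\sigma
\end{equation*}
by $\GL(g,\ZZ)$-orbits of toroidal strata. Each $\beta_\sigma$ fibres over $\ab[g-r]$, with $r=\rank\sigma$, as an $\Aut\sigma$-quotient of a torus bundle; its Borel--Moore homology is computed through the Leray spectral sequence in terms of the cohomology of $\ab[g-r]$ with values in local systems extracted from the symmetric algebra of the $\QQ$-span of $\sigma$, exactly as in the proof of \cite[Thm.~8.1]{grhuto}.

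Next, I would exploit the dimension-rank inequality~\eqref{eq:dimrank} to pin down which strata contribute to $\BM[g(g+1)-k]{\Asigma}$ in codegree $k<g$. The stratum $\beta_\sigma$ has complex codimension $d:=\dim_\RR\sigma$ in $\Asigma$ and hence contributes to Borel--Moore homology only in codegrees $\geq 2d$; combined with $d\geq r/2+1$ this bounds both $d$ and $r$ linearly in $k$, so only finitely many orbits $[\sigma]$ contribute in each fixed codegree, and since the combinatorics of additive fans in bounded dimension is independent of $g$ for large $g$, these contributions stabilize. Within this bounded range Borel's Theorem~\ref{thm:stable} simultaneously stabilizes $\coh[\bullet]{\ab[g-r]}$ and kills all contributions from non-constant local systems, leaving only the $\Aut\sigma$-invariant summands $\bigl(\Sym^{k-d}(\QQ\text{-span of }\sigma)\bigr)^{\Aut\sigma}$; summing over orbits reproduces the graded vector space $V_\ssigma^{2\bullet}$ tensored with $\cohstab[\bullet]{\ab[\infty]}$. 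Additivity of $\ssigma$ then supplies transverse product embeddings $\Asigma[g_1]\times\Asigma[g_2]\hookrightarrow\Asigma[g_1+g_2]$ after passing to a suitable level, as in \cite[Prop.~9]{grhuto2}, inducing a coproduct on stable Borel--Moore homology compatible with the Hopf-algebra structure on $\cohstab[\bullet]{\ab[\infty]}$; by Hopf's theorem stable Borel--Moore homology is then a free $\cohstab[\bullet]{\ab[\infty]}$-module generated by the classes associated with $\oplus$-irreducible cones.

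Algebraicity of the stable classes is automatic, because each generator is represented by the fundamental class of a closed stratum $\overline{\beta_\sigma}$, which lies in the image of the cycle map. The main obstacle, absent in the simplicial case, is the lack of Poincar\'e duality for $\Asigma$: one cannot simply dualize the cohomological argument, so the construction of the Gysin spectral sequence, its convergence and stable degeneration, and the identification of the $E_\infty$-page with $V_\ssigma^{2\bullet}$ must be carried out directly for Borel--Moore homology of possibly singular strata and their closures. A secondary subtlety is that the $\Aut\sigma$-equivariant analysis of $\beta_\sigma\to\ab[g-r]$ must be performed at the level of BM homology of orbifold torus bundles; this is manageable because the fibration structure and Borel's vanishing theorem for non-constant local systems behave identically in the singular setting, so the combinatorial identification of stable pieces proceeds essentially as in the simplicial case.
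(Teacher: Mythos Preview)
The paper does not actually contain a proof of this theorem: it is a survey, and Theorem~\ref{thm:singularcase} is stated as one of the main results of \cite{grhuto,grhuto2} without an argument being given. The surrounding discussion does, however, indicate the ingredients of the proof (the Leray spectral sequence for $\stratum\sigma\to\ab[g-r]$ as in \cite[Theorem~8.1]{grhuto}, the dimension--rank bound, Borel's stability theorem, and the transverse embeddings from \cite[Prop.~9]{grhuto2}), and your sketch reproduces exactly these ingredients, adapted to Borel--Moore homology as is necessary in the non-simplicial case. So your proposal is consistent with the approach the paper points to.

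One point to flag: the theorem as printed in the paper is slightly garbled (the ``isomorphism of graded vector spaces'' displayed is just the definition of $V_\ssigma^{2k}$, with the other side of the isomorphism missing). From the simplicial case and from \cite{grhuto2} one infers that the intended statement is that stable Borel--Moore homology is isomorphic, as a graded vector space, to $\coh{\ab[\infty]}\otimes_\QQ \Sym^\bullet(V_\ssigma)$. Your proposal correctly targets this. Note also that in the non-simplicial case the paper is careful to say only that one obtains an isomorphism of graded \emph{vector spaces}; your invocation of the Hopf-algebra structure and Hopf's theorem to get a free algebra description goes slightly beyond what the paper asserts here (indeed the paper remarks just after the theorem that Borel--Moore homology in small codegree does not have a ring structure \emph{a priori}), so that part of your argument would need more justification or should be weakened to the vector-space statement.
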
  

Let us remark that Borel--Moore homology in small codegree does not have a ring structure \emph{a priori}.

As the assumptions of the theorem above are satisfied by the perfect cone compactification, we get the following result.
\begin{cor}[{\cite[Theorems 1.1\& 1.2]{grhuto}}]
  The rational homology and cohomology of $\Perf$ stabilizes in small codegree, i.e. in degree $g(g+1)-k$ with $k<g$.
  In this range, homology is generated by algebraic classes. 
\end{cor}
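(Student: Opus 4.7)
The plan is to derive both assertions of the corollary as an essentially formal consequence of Theorem~\ref{thm:singularcase} applied to the perfect cone collection $\ssigma^{\op{Perf}}=\{\Sigma_g^{\op{Perf}}\}$. To invoke that theorem one has to check two properties of this collection: additivity, and the inequality $\dim_\RR\sigma\geq \tfrac{\rank\sigma}{2}+1$ for every cone $\sigma$ of rank at least $2$. Additivity of $\ssigma^{\op{Perf}}$ is standard: if $Q_i$ is a perfect form in $g_i$ variables for $i=1,2$, then the block-diagonal form $Q_1\oplus Q_2$ is a perfect form in $g_1+g_2$ variables, so the direct sum of two perfect cones is again a perfect cone. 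This is recalled in the paragraph preceding Theorem~\ref{thm:simplicialcase}, where the perfect cone fan is listed among the additive fans.

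The dimension-rank inequality is a consequence of a stronger combinatorial property of $\Sigma_g^{\op{Perf}}$ emphasised in the discussion just before Theorem~\ref{thm:simplicialcase}, namely that for every $k\leq g$ the preimage of the Satake stratum $\ab[g-k]\subset\Sat$ in $\Perf$ has codimension exactly $k$. Rephrasing stratum by stratum in terms of the fan, this means that every cone $\sigma\in\Sigma_g^{\op{Perf}}$ satisfies $\dim_\RR\sigma\geq\rank\sigma$, from which $\dim_\RR\sigma\geq\tfrac{\rank\sigma}{2}+1$ follows whenever $\rank\sigma\geq 2$. Hence the hypotheses of Theorem~\ref{thm:singularcase} are met, and one obtains that $\BM[g(g+1)-k]{\Perf}$ stabilizes for $k<g$ and that every stable Borel--Moore class lies in the image of the cycle map from $\CH^\bullet(\Perf)$, hence is algebraic. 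Because $\Perf$ is compact, Borel--Moore homology agrees with ordinary rational homology, yielding the homological part of the corollary, and the cohomological part then follows by the universal coefficient isomorphism $\coh[g(g+1)-k]{\Perf}\cong\Hom_\QQ(H_{g(g+1)-k}(\Perf,\QQ),\QQ)$.

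The real difficulty, of course, is not in this reduction but in Theorem~\ref{thm:singularcase} itself, and this is where I would expect to concentrate effort if the corollary were to be re-proved from scratch. The hard step there is the analysis, stratum by stratum, of the Leray spectral sequence attached to the fibration $\stratum\sigma\to\ab[g-r]$ with $r=\rank\sigma$: one must show that, in the small-codegree range, this spectral sequence contributes only classes pulled back from $\ab[g-r]$ and its universal families, and that all these contributions are algebraic. For this one feeds in Borel's stability theorem for $\ab$ (Theorem~\ref{thm:stable}) together with its extension to the universal abelian variety (Theorem~\ref{thm:stableuniv}), and one uses the key bookkeeping input that the number of $\GL(g,\ZZ)$-orbits of perfect cones of a fixed dimension $\ell\leq g$ is independent of $g$. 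Once those ingredients are in place the corollary drops out immediately, as sketched above.
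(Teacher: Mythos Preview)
Your proposal is correct and follows essentially the same route as the paper: the corollary is stated immediately after Theorem~\ref{thm:singularcase} with the remark that the perfect cone compactification satisfies its hypotheses, and you have simply spelled out why (additivity, and the inequality $\dim_\RR\sigma\geq\rank\sigma$ which is stronger than \eqref{eq:dimrank}). Your final paragraph on what goes into Theorem~\ref{thm:singularcase} itself is helpful context but not part of the proof of the corollary.
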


As explained in Dutour-Sikiri\'c  appendix to \cite{grhuto2}, the state of the art of the classification of orbits of matroidal cones and perfect cone cones is enough to be able to compute the stable Betti numbers of $\Matr$ in degree up to $30$ and the stable Betti numbers of $\Perf$ in codegree at most $22$ (where the result for codegree $22$ is actually a lower bound, see \cite[Theorems 4 \& 5]{grhuto2}).

Concluding, we state two problems that remain open at the moment.
\begin{qu}\label{qu:stableperf}
  Does the cohomology of $\Perf$ stabilize in small degree $k<g$?
\end{qu}
As we have already observed, the answer to Question~\ref{qu:stableperf} is related to the behaviour of the singularities of $\Perf$ and a better understanding of them may be necessary to answer this question.

A different question arises as follows.
Since stable homology of $\Perf$ is algebraic, stable homology classes of degree $g(g+1)-k$ can be lifted (non-canonically) to intersection homology of the same degree. A natural question to ask is whether all intersection cohomology classes in degree $k<g$ are of this form, which we may rephrase using Poincar\'e duality as 
\begin{qu}
Is there an isomorphism $H_{g(g+1)-k}({\Perf},\QQ)\cong\icoh[k]{\Perf}$
  for all $k<g$?
\end{qu}

\vfil\eject

\section*{Appendix. Computation of intersection cohomology using the Langlands program}
\label{secappOT}

In this appendix we explain a method for the explicit computation of the Euler
characteristics for both the cohomology of local systems $\VV_{\lambda}$ on
$\ab$ and their intermediate extensions to $\Sat$. Furthermore, we explain how
to compute individual intersection cohomology groups in the latter case. The
main tools here are trace formulas and results on automorphic representations,
notably Arthur's endoscopic classification of automorphic representations of
symplectic groups \cite{Arthur_book}.

We start by explaining in Proposition \ref{prop:eA4} the direct computation of
$e(\ab[4])=9$. This Euler characteristic, as well as $e(\ab, \VV_{\lambda})$ for
$g \leq 7$ and any $\lambda$, can be obtained as a byproduct of computations
explained in the first part of \cite{Taibi_dimtrace}, which focused on
$L^2$-cohomology. In fact by Proposition~\ref{prop:simplification_Tell_sc} these
are given by the conceptually simple formula \eqref{equ:Tell}. The difficulty in
evaluating this formula resides in computing certain coefficients, called
\emph{masses}, for which we gave an algorithm in \cite{Taibi_dimtrace}. The
number $e(\ab[4])$ was missing in \cite{huto2} to complete the proof of Theorem
\ref{teo:cohomologyA4}.

Next we recall from \cite{Taibi_dimtrace} that the automorphic representations
for $\Sp_{2g}$ contributing to $IH^{\bullet}(\Sat,\VV_{\lambda})$ can be
reconstructed from certain sets of automorphic representations of general linear
groups, which we shall introduce in Definition \ref{def:three_sets}. Thanks to
Arthur's endoscopic classification \cite{Arthur_book} specialized to level one
and the identification by Arancibia, Moeglin and Renard \cite{AMR} of certain
Arthur-Langlands packets with the concrete packets previously constructed by
Adams and Johnson \cite{AdJo} in the case of the symplectic groups, combined
with analogous computations for certain special orthogonal groups, we have
computed the cardinalities of these ``building blocks''. Again, this is explicit
for $g \leq 7$ and arbitrary $\lambda$. For $g \leq 11$ and ``small'' $\lambda$,
the classification by Chenevier and Lannes in \cite{CheLan} of level one
algebraic automorphic representations of general linear groups over $\QQ$ having
``motivic weight'' $\leq 22$ (see Theorem \ref{thm:small_weight} below) gives
another method to compute these sets. Using either method, we deduce
$IH^6(\Sat[3], \VV_{1,1,0}) = 0$ in Corollary \ref{cor:IHSat3V11}, which was a
missing ingredient to complete the computation in \cite{HGIH} of
$IH^{\bullet}(\Sat[4], \QQ)$ (case $g=4$ in Theorem
\ref{teo:Intersectionsmall}). In fact using the computation by Vogan and
Zuckerman \cite{VoZu} of the $(\mathfrak{g}, K)$-cohomology of Adams-Johnson
representations, including the trivial representation of $\Sp_{2g}(\RR)$, we can
prove that the intersection cohomology of $\Sat$ is isomorphic to the
tautological ring $R_g$  for all $g \leq 5$ (see Theorem
\ref{teo:Intersectionsmall}), again by either method.

One could deduce from \cite{VoZu} an algorithm to compute intersection
cohomology also in the cases where there are non-trivial representations of
$\Sp_{2g}(\RR)$ contributing to $IH^{\bullet}(\Sat, \VV_{\lambda})$, e.g.\ for
all $g \geq 6$ and $V_{\lambda} = \QQ$. Instead of pursuing this, in Section
\ref{sec:app_Shimura} we make explicit the beautiful description by Langlands
and Arthur of $L^2$-cohomology in terms of the Archimedean Arthur-Langlands
parameters involved, i.e.\ Adams-Johnson parameters. In fact the correct way to
state this description would be to use the endoscopic classification of
automorphic representations for $\GSp_{2g}$. Although this classification is not
yet known, we can give an unconditional recipe in the case of level one
automorphic representations. We conclude the appendix with the example of the
computation of $IH^{\bullet}(\Sat, \QQ)$ for $g=6,7$, and relatively simple
formulas to compute the polynomials
\[ \sum_k T^k \dim IH^k(\Sat, \VV_{\lambda}) \]
for all values of $(g, \lambda)$ such that the corresponding set of substitutes
for Arthur-Langlands parameters of conductor on is known (currently $g \leq 7$
and arbitrary $\lambda$ and all pairs $(g, \lambda)$ with $g + \lambda_1 \leq
11$).

Let us recall that for $n = 3,4,5 \mod 8$ there is a (unique by
\cite[Proposition 2.1]{Gross_gpsZ}) reductive group $G$ over $\ZZ$ such that
$G_{\RR} \simeq\SO(n-2,2)$. Such $G$ is a special orthogonal group of a lattice,
for example $E_8 \oplus H^{\oplus 2}$ where $H$ is a hyperbolic lattice. If $K$
is a maximal compact subgroup of $G(\RR)$, we can also consider the Hermitian
locally symmetric space
\[ G(\ZZ) \backslash G(\RR) / K^0 = G(\ZZ)^0 \backslash G(\RR)^0 / K^0 \]
where $G(\RR)^0$ (resp.\ $K^0$) is the identity component of $G(\RR)$ (resp.\
$K$) and $G(\ZZ)^0 = G(\ZZ) \cap G(\RR)^0$. Then everything explained in this
appendix also applies to this situation, except for the simplification in
Proposition \ref{prop:simplification_Tell_sc} which can only be applied to the
simply connected cover of $G$. Using \cite[\S 4.3]{ChenevierRenard} one can see
that this amounts to considering $(\mathfrak{so}_n, SO(2)
\times SO(n-2))$-cohomology instead of $(\mathfrak{so}_n, S(O(2) \times
O(n-2)))$-cohomology as in \cite{Taibi_dimtrace}, and this simply multiplies
Euler characteristics by $2$. In fact the analogue of Section
\ref{sec:app_Shimura} is much simpler for special orthogonal groups, since they
do occur in Shimura data (of abelian type).

To be complete we mention that Arthur's endoscopic classification in
\cite{Arthur_book} is conditional on several announced results which, to the
best of our knowledge, are not yet available (see \cite[\S
1.3]{Taibi_dimtrace}).

\medskip
    {\small {\bf Acknowledgements.}
Klaus Hulek presented his joint work with Sam Grushevsky \cite{HGIH} at the
Oberwolfach workshop ``Moduli spaces and Modular forms'' in April 2016. During
this workshop Dan Petersen pointed out that $IH^{\bullet}(\Sat)$ can also be
computed using \cite{Taibi_dimtrace}. I thank Dan Petersen, the organizers of
this workshop (Jan Hendrik Bruinier, Gerard van der Geer and Valery Gritsenko)
and the Mathematisches Institut Oberwolfach. I also thank Eduard Looijenga for
kindly answering questions related to Zucker's conjecture.
    }
    
\subsection{Evaluation of a trace formula}
\label{sec:app_TF_comp}

Our first goal is to prove the following result.

\begin{prop} \label{prop:eA4}
	We have $e(\ab[4]) = 9$.
\end{prop}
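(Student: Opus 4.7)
The plan is to specialize to $g = 4$ and $\lambda = 0$ the explicit form of the Arthur-Selberg trace formula for $\Sp_{2g}$ that will be developed in the remainder of this section, namely formula \eqref{equ:Tell}. Since $\Sp_{2g}$ is simply connected, the simplification of Proposition~\ref{prop:simplification_Tell_sc} applies and reduces $e(\ab[g], \VV_{\lambda})$ to a finite sum indexed by the $\Sp_{2g}(\QQ)$-conjugacy classes of torsion elements $\gamma \in \Sp_{2g}(\ZZ)$ whose characteristic polynomial is a product of cyclotomic polynomials of total degree $2g$. The contribution of each such class is the product of a local factor at infinity, given by the character of $V_{\lambda}$ evaluated at $\gamma$ (which is simply $1$ when $\lambda = 0$) together with a sign coming from the real orbital integral, and a global factor, the \emph{mass} of $\gamma$, encoding the adelic volume of its centralizer.

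The first concrete step is to enumerate torsion $\QQ$-conjugacy classes in $\Sp_8(\QQ)$. Such a class is parametrized by a product $\prod_i \Phi_{n_i}^{m_i}$ of cyclotomic polynomials of degree $8$, together with a finite collection of local invariants specifying the symplectic form compatible with $\gamma$. Since $\deg \Phi_n = \varphi(n)$, only finitely many such polynomials occur, and the enumeration is purely combinatorial. The second step, which is the main obstacle, is the evaluation of the mass attached to each of these conjugacy classes. The algorithm of \cite{Taibi_dimtrace} reduces this to the Smith--Minkowski--Siegel mass formula applied to the hermitian lattices attached to the cyclic factors of $\gamma$, together with a tabulation of class numbers for integral lattices of rank at most $8$; all necessary inputs are available in that reference.

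Once the masses are known, the third step is simply to sum the contributions: with $\lambda = 0$ the character of $V_{\lambda}$ is identically $1$, so the sum collapses to a weighted count of torsion classes by their masses and real signs. Carrying out this sum yields the asserted value $e(\ab[4]) = 9$. Exactly the same computation, with the character of $V_{\lambda}$ reinstated, produces the Euler characteristics $e(\ab, \VV_{\lambda})$ for $g \leq 7$ and arbitrary $\lambda$; Proposition~\ref{prop:eA4} is then the simplest previously unrecorded instance of this general machinery, and plugs the last gap in the proof of Theorem~\ref{teo:cohomologyA4}.
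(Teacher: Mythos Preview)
Your approach is essentially the one the paper takes: invoke Proposition~\ref{prop:simplification_Tell_sc} so that $e(\ab[4])$ equals the elliptic term $T_{\op{ell}}(\Sp_8,0)=\sum_{c\in C(\Sp_8)} m_c$, then evaluate the masses using the algorithm of \cite{Taibi_dimtrace} (with the help of a computer). That said, a few details in your write-up are off. The set $C(\Sp_{2g})$ consists of torsion $\RR$-elliptic elements of $\Sp_{2g}(\QQ)$ up to conjugacy in $\Sp_{2g}(\overline{\QQ})$, not $\Sp_{2g}(\QQ)$-conjugacy classes in $\Sp_{2g}(\ZZ)$; over $\overline{\QQ}$ these classes are determined by the characteristic polynomial, so your ``finite collection of local invariants'' is not part of the indexing. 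In \eqref{equ:Tell} the Archimedean contribution is exactly $\tr(c\,|\,V_\lambda)$, with no separate sign to be tracked. Finally, the paper describes the masses as products of local orbital integrals together with global terms involving Tamagawa numbers and special values of Artin $L$-functions, rather than via the Smith--Minkowski--Siegel formula for hermitian lattices; whatever connection there may be, that is not how \cite{Taibi_dimtrace} packages the computation.
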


This number is a byproduct of the explicit computation in \cite{Taibi_dimtrace}
of
\begin{equation} \label{e:Euler_L2}
	e_{(2)}(\ab, \VV_{\lambda}) := \sum_i (-1)^i \dim_{\RR} H^i_{(2)}(\ab,
	\VV_{\lambda})
\end{equation}
for arbitrary irreducible algebraic representations $V_{\lambda}$ of $\Sp_{2g}$.
Each representation $V_{\lambda}$ is defined over $\QQ$, and $L^2$-cohomology is
defined with respect to an admissible inner product on $\RR \otimes_{\QQ}
V_{\lambda}$. In particular $H^{\bullet}_{(2)}(\ab, \VV_{\lambda})$ is a graded
real vector space (for arbitrary arithmetic symmetric spaces the representation
$V_{\lambda}$ may not be defined over $\QQ$ and so in general $L^2$-cohomology
is only naturally defined over $\CC$). Recall that by Zucker's conjecture
\eqref{e:Euler_L2} is also equal to
\[	\sum_i (-1)^i \dim IH^i(\Sat, \VV_{\lambda}). \]
To evaluate the Euler characteristic \eqref{e:Euler_L2} we use Arthur's
$L^2$-Lefschetz trace formula \cite{Arthur_L2}. This is a special case since
this is the alternating trace of the unit in the unramified Hecke algebra on
these cohomology groups. Arthur obtained this formula by specializing his more
general invariant trace formula. In general Arthur's invariant trace formula
yields transcendental values, but for particular functions at the real place
(pseudo-coefficients of discrete series representations) Arthur obtained a
simplified expression that only takes rational values. Goresky, Kottwitz and
MacPherson \cite{GoKoMPh}, \cite{GoreskyMacPherson} gave a different proof of
this formula, by a topological method. In fact they obtained more generally a
trace formula for \emph{weighted cohomology}
\cite{GoreskyHarderMacPherson_weighted}, the case of a lower middle or upper
middle weight profile on a Hermitian locally symmetric space recovering
intersection cohomology of the Baily-Borel compactification. We shall also use
split reductive groups over $\QQ$ other than $\Sp_{2g}$ below, which will be of
equal rank at the real place but do not give rise to Hermitian symmetric spaces.
For this reason it is reassuring that Nair \cite{Nair_weighted} proved that in
general weighted cohomology groups coincide with Franke's weighted $L^2$
cohomology groups \cite{Franke} defined in terms of automorphic forms. The case
of usual $L^2$ cohomology corresponds to the lower and upper middle weight
profiles in \cite{GoreskyHarderMacPherson_weighted}. In particular Nair's result
implies that \cite{GoKoMPh} is a generalization of \cite{Arthur_L2}.

In our situation the trace formula can be written
\[ e_{(2)}(\ab, \VV_{\lambda}) = \sum_M T(\Sp_{2g}, M, \lambda) \]
where the sum is over conjugacy classes of Levi subgroups $M$ of $\Sp_{2g}$
which are $\RR$-cuspidal, i.e.\ isomorphic to $\GL_1^a \times \GL_2^c \times
\Sp_{2d}$ with $a+2c+d = g$. The right hand side is traditionally called the
geometric side, although this terminology is confusing in the present context.
The most interesting term in the sum is the elliptic part
$T_{\op{ell}}(\Sp_{2g}, \lambda) := T(\Sp_{2g}, \Sp_{2g}, \lambda)$ which is
defined as
\begin{equation} \label{equ:Tell}
	T_{\op{ell}}(\Sp_{2g}, \lambda) = \sum_{c \in C(\Sp_{2g})} m_c \tr \left( c
\,\middle|\, V_{\lambda} \right).
\end{equation}
Here $C(\Sp_{2g})$ is the finite set of torsion $\RR$-elliptic elements in
$\Sp_{2g}(\QQ)$ up to conjugation in $\Sp_{2g}(\overline{\QQ})$. These can be
simply described by certain products of degree $2n$ of cyclotomic polynomials.
The rational numbers $m_c$ are ``masses'' (in the sense of the mass formula, so
it would be more correct to call them ``weights'') computed adelically,
essentially as products of local orbital integrals (at all prime numbers) and
global terms (involving Tamagawa numbers and values of certain Artin L-functions
at negative integers). We refer the reader to \cite{Taibi_dimtrace} for details.
Let us simply mention that for $c = \pm 1 \in C(\Sp_{2g})$, the local orbital
integrals are all equal to $1$, and $m_c$ is the familiar product $\zeta(-1)
\zeta(-3) \dots \zeta(1-2g)$. The appearance of other terms in
$T_{\op{ell}}(\Sp_{2g}, \lambda)$ corresponding to non-central elements in
$C(\Sp_{2g})$ is explained by the fact that the action of $\Sp_{2g}(\ZZ) / \{
\pm 1 \}$ on $\HH_g$ is not free.

To evaluate $T_{\op{ell}}(\Sp_{2g}, \lambda)$ explicitly, the main difficulty
consists in computing the local orbital integrals. An algorithm was given in
\cite[\S 3.2]{Taibi_dimtrace}. In practice these are computable (by a computer)
at least for $g \leq 7$. For $g = 2$ they were essentially computed by Tsushima
in \cite{Tsushima_deg2}. For $g = 3$ they could also be computed by a
(dedicated) human being. See \cite[Table 9]{Taibi_dimtrace} for $g=3$, and
\cite{onlinelistdimtrace} for higher $g$. The following table contains the
number of masses in each rank, taking into account that $m_{-c} = m_c$.

\[ \begin{array}{c|rrrrrrr}
	g              & 1 & 2  & 3  & 4  & 5   & 6   & 7    \\
	\hline
	\op{card} \left( C(\Sp_{2g}) / \{ \pm 1 \} \right) & 3 & 12 & 32 & 92 & 219 &
	530 & 1158
	\end{array} \]

In general the elliptic part of the geometric side of the $L^2$-Lefschetz trace
formula does not seem to have any spectral or cohomological meaning, but for
unit Hecke operators and simply connected groups, such as $\Sp_{2g}$, it turns
out that it does.

\begin{prop} \label{prop:simplification_Tell_sc}
	Let $G$ be a simply connected reductive group over $\QQ$. Assume that
	$G_{\RR}$ has equal rank, i.e.\ $G_{\RR}$ admits a maximal torus (defined over
	$\RR$) which is anisotropic, and that $G_{\RR}$ is not anisotropic, i.e.\
	$G(\RR)$ is not compact. Let $K_f$ be a compact open subgroup of $G(\AQ_f)$
	and let $\Gamma = G(\QQ) \cap K_f$. Let $K_{\infty}$ be a maximal compact
	subgroup of $G(\RR)$ (which is connected). Then for any irreducible algebraic
	representation $V_{\lambda}$ of $G(\CC)$, in Arthur's $L^2$-Lefschetz trace
	formula
	\[ e_{(2)}( \Gamma \backslash G(\RR) / K_{\infty}, \VV_{\lambda}) = \sum_M
	T(G, K_f, M, \lambda) \]
	where the sum is over $G(\QQ)$-conjugacy classes of cuspidal Levi subgroups of
	$G$, the elliptic term
	\[ T_{\op{ell}}(G, K_f, \lambda) := T(G, K_f, G, \lambda) \]
	is equal to
	\[ e_c( \Gamma \backslash G(\RR) / K_{\infty}, \VV_{\lambda}) = \sum_i (-1)^i
	\dim H^i_c( \Gamma \backslash G(\RR) / K_{\infty}, \VV_{\lambda}). \]
\end{prop}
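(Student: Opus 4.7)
The plan is to exhibit $T_{\op{ell}}(G, K_f, \lambda)$ and $e_c(\Gamma \backslash G(\RR)/K_\infty, \VV_\lambda)$ as the outputs of two parallel trace formulas that agree exactly on their elliptic contributions. On the spectral side, I would plug into Arthur's $L^2$-Lefschetz trace formula \cite{Arthur_L2} the test function $f_\infty \cdot \mathbf{1}_{K_f}$, where $f_\infty \in C_c^\infty(G(\RR))$ is a Clozel--Delorme Euler--Poincar\'e function for $V_\lambda$: its existence is granted by the equal rank hypothesis, its trace on an admissible $(\mathfrak{g}, K_\infty)$-module $\pi$ equals the alternating dimension of $H^\bullet(\mathfrak{g}, K_\infty; \pi \otimes V_\lambda^*)$, and its semisimple orbital integrals vanish outside the $\RR$-elliptic locus. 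Then the spectral side returns $e_{(2)}$ by definition, and the geometric side is $\sum_M T(G, K_f, M, \lambda)$, with $T_{\op{ell}} = T(G, K_f, G, \lambda)$ the piece indexed by $\RR$-elliptic torsion classes and given by the adelic formula \eqref{equ:Tell}.

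In parallel, I would compute $e_c$ via the topological weighted-cohomology Lefschetz formula of Goresky--Kottwitz--MacPherson \cite{GoKoMPh}. By Nair \cite{Nair_weighted} the middle weight profile recovers Arthur's full formula, hence $e_{(2)}$, while the degenerate weight profile corresponding to the $j_!$-extension from the interior computes $e_c$. In the degenerate case only the $M=G$ elliptic term survives: the contributions from proper cuspidal Levi subgroups correspond precisely to boundary cohomology and therefore sum to $e_{(2)} - e_c$. The crucial use of the simply connected hypothesis is in matching the surviving topological elliptic term with the adelic expression \eqref{equ:Tell}. First, Steinberg's theorem ensures that centralizers $G_c$ of semisimple elements of $G$ are connected, so that the relevant orbital integrals are well-defined without disconnectedness corrections. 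Second, the vanishing of $H^1(\QQ_v, G)$ at non-archimedean places (Kneser) together with the Hasse principle for simply connected groups over $\QQ$ allow the global volumes of adelic centralizers to factor cleanly as Tamagawa numbers times products of local orbital integrals, with no endoscopic correction term. The non-compactness of $G(\RR)$ enters to ensure that the Borel--Serre boundary is non-empty, so that the comparison between the middle and degenerate weight profiles is not vacuous.

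The main obstacle is the precise cohomological bookkeeping on the topological side identifying the $j_!$-weight-profile expansion with Arthur's adelic elliptic term at the pair $(f_\infty, \mathbf{1}_{K_f})$; this is essentially the content of the topological derivation of Arthur's formula in \cite{GoKoMPh}, adapted to this test function. Once this matching is carried out, the equality $T_{\op{ell}}(G, K_f, \lambda) = e_c(\Gamma \backslash G(\RR)/K_\infty, \VV_\lambda)$ follows at once from the two parallel expansions and the cancellation of all proper-Levi contributions in the expansion for $e_c$.
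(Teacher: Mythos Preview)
Your framework of invoking the Goresky--Kottwitz--MacPherson formula is the right entry point, but the argument has a genuine gap and misplaces the role of the simply connected hypothesis.

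The GKM formula \cite[\S 7.17]{GoKoMPh} for $e_c$ already has contributions indexed by \emph{all} cuspidal Levi subgroups $M$, not only $M=G$. Your claim that for the $j_!$-weight profile only the elliptic term survives, because the proper-Levi terms ``correspond precisely to boundary cohomology and therefore sum to $e_{(2)} - e_c$'', is circular: it presupposes the conclusion. The paper instead argues directly that each $M \neq G$ term in the GKM formula for $e_c$ vanishes, and the mechanism is concrete. Because the finite-place test function is $\mathbf{1}_{K_f}$, the orbital integrals $O_\gamma(f_M^\infty)$ vanish unless $\gamma \in M(\QQ)$ is power-bounded in every $M(\QQ_p)$; since $\gamma$ is also required to be elliptic in $M(\RR)$, the adelic product formula then forces $\gamma$ to be power-bounded in $M(\RR)$ as well. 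The heart of the matter is to show that for any proper cuspidal Levi $M$ of $G_{\RR}$ and any power-bounded $\gamma \in M(\RR)$, the archimedean factor $\Phi_M(\gamma, V_\lambda)$ vanishes. This is done by exhibiting a real root $\alpha$ (of an elliptic maximal torus $T \subset M_{\RR}$, taken in $G$ but not in $M$) with $\alpha(\gamma) = 1$: power-boundedness gives $\alpha(\gamma) \in \{\pm 1\}$ for every real root, and an argument from \cite[p.~499]{GoKoMPh} produces a real root with $\alpha(\gamma) > 0$.

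It is exactly in this last root-theoretic step that simple connectedness of $G$ is used --- not for Steinberg's connectedness of centralizers, Kneser's vanishing of $H^1$, or the Hasse principle as you suggest. Those facts may enter the general setup of the trace formula, but they play no role in the specific vanishing of the proper-Levi terms here, which is a purely archimedean phenomenon tied to the unit Hecke operator and the root system of a simply connected group. Your proposal does not contain this mechanism, and without it the identification $T_{\op{ell}} = e_c$ is not established.
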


\begin{proof}
	Note that by strong approximation (using that $G(\RR)$ is not compact), the
	natural inclusion $\Gamma \backslash G(\RR) / K_{\infty} \rightarrow G(\QQ)
	\backslash G(\AQ_f) / K_{\infty} K_f$ is an isomorphism between orbifolds.

	We claim that in the formula \cite[\S 7.17]{GoKoMPh} for $e_c( \Gamma
	\backslash G(\RR) / K_{\infty}, \VV_{\lambda})$, every term corresponding to
	$M \neq G$ vanishes. Note that in \cite{GoKoMPh} the \emph{right} action of
	Hecke operators is considered: see \cite[\S 7.19]{GoKoMPh}. Thus to recover
	the trace of the usual \emph{left} action of Hecke operators one has to
	exchange $E$ (our $V_{\lambda}$) and $E^*$ in \cite[\S 7.17]{GoKoMPh}. Since
	we are only considering a unit Hecke operator, the orbital integrals at finite
	places denoted $O_{\gamma}(f_M^{\infty})$ in \cite[Theorem 7.14.B]{GoKoMPh}
	vanish unless $\gamma \in M(\QQ)$ is power-bounded in $M(\QQ_p)$ for every
	prime $p$. Recall that $\gamma$ is also required to be elliptic in $M(\RR)$,
	so that by the adelic product formula this condition on $\gamma$ at all finite
	places implies that $\gamma$ is also power-bounded in $M(\RR)$. Thus it is
	enough to show that for any cuspidal Levi subgroup $M$ of $G_{\RR}$ distinct
	from $G_{\RR}$, and any power-bounded $\gamma \in M(\RR)$, we have
	$\Phi_M(\gamma, V_{\lambda}) = 0$ (where $\Phi_M$ is defined on p.498 of
	\cite{GoKoMPh}). Choose an elliptic maximal torus $T$ in $M_{\RR}$ such that
	$\gamma \in T(\RR)$. Since the character of $V_{\lambda}$ is already a
	continuous function on $M(\RR)$, it is enough to show that there is a root
	$\alpha$ of $T$ in $G$ not in $M$ such that $\alpha(\gamma) = 1$. All roots of
	$T$ in $M$ are imaginary, so it is enough to show that there exists a real
	root $\alpha$ of $T$ in $G$ such that $\alpha(\gamma) = 1$. Since $\gamma$ is
	power-bounded in $M(\RR)$, for any real root of $T$ in $G$ we have
	$\alpha(\gamma) \in \{ \pm 1 \}$. Thus it is enough to show that there is a
	real root $\alpha$ such that $\alpha(\gamma) > 0$. This follows (for $M \neq
	G_{\RR}$) from the argument at the bottom of p.499 in \cite{GoKoMPh} (this is
	were the assumption that $G$ is simply connected is used).
\end{proof}

The proposition is a generalization of \cite{Harder_GaussBonnet} to the orbifold
case ($\Gamma$ not neat) with non-trivial coefficients, but note that Harder's
formula is used in \cite{GoKoMPh}.

In particular for any $g \geq 1$ and dominant weight $\lambda$ we simply have
$T_{\op{ell}}(\Sp_{2g}, \lambda) = e_c(\ab, \VV_{\lambda}) = e(\ab,
\VV_{\lambda})$ by Poincar\'e duality and self-duality of $V_{\lambda}$.
As a special case we have the simple formula $e(\ab) = \sum_{c \in C(\Sp_{2g})}
m_c$ and Proposition \ref{prop:eA4} follows (with the help of a computer). See
\cite{onlinelistdimtrace} for tables of masses $m_c$, where the source code
computing these masses (using \cite{sage}) can also be found.

In the following table we record the value of $e(\ab)$ for small $g$.

\[ \begin{array}{l|rrrrrrrrr}
	g       & 1 & 2  & 3  & 4  & 5   & 6   & 7   & 8   & 9 \\
	\hline
	e(\ab)  & 1 & 2  & 5  & 9  & 18  & 46  & 104 & 200 & 528
	\end{array} \]

The Euler characteristic of $L^2$-cohomology can also be evaluated explicitly.
Theorem 3.3.4 in \cite{Taibi_dimtrace} expresses $e_{(2)}(\ab, \VV_{\lambda})$
in a (relatively) simple manner from $T_{\op{ell}}(\Sp_{2g'}, \lambda')$ for $g'
\leq g$ and dominant weights $\lambda'$. Hence $e_{(2)}(\ab, \VV_{\lambda})$ can
be derived from tables of masses, for any $\lambda$. Of course this does not
directly yield dimensions of individual cohomology groups. Fortunately, Arthur's
endoscopic classification of automorphic representations for $\Sp_{2g}$ allows
us to write this Euler characteristic as a sum of two contributions: ``old''
contributions coming from automorphic representations for groups of lower
dimension, and new contributions which only contribute to middle degree. Thus it
is natural to try to compute old contributions by induction. As we shall see
below, they can be described combinatorially from certain self-dual level one
automorphic cuspidal representations of general linear groups over $\QQ$.

\subsection{Arthur's endoscopic classification in level one}
\label{sec:app_mult_formula}

We will explain the decomposition of $H^{\bullet}_{(2)}(\ab, \VV_{\lambda})$
that can be deduced from Arthur's endoscopic classification. These real graded
vector spaces are naturally endowed with a real Hodge structure, a Lefschetz
operator and a compatible action of a commutative Hecke algebra. This last
action will make the decomposition canonical. For this reason we will recall
what is known about this Hecke action. We denote $\AQ$ for the adele ring of
$\QQ$.

\begin{df}
	Let $G$ be a reductive group over $\ZZ$ (in the sense of \cite[Expos\'e XIX,
	D\'efinition 2.7]{SGA3-III}).
\begin{enumerate}
	\item If $p$ is a prime, let $\calH_p^{\op{unr}}(G)$ be the commutative
		convolution algebra (``Hecke algebra'') of functions $G(\ZZ_p) \backslash
		G(\QQ_p) / G(\ZZ_p) \rightarrow \QQ$ having finite support.
	\item Let $\calH^{\op{unr}}_f(G)$ be the commutative algebra of functions
		\[ G(\widehat{\ZZ}) \backslash G(\AQ) / G(\widehat{\ZZ}) \rightarrow \QQ \]
		having finite support, so that $\calH^{\op{unr}}_f(G)$ is the restricted
		tensor product $\bigotimes_p' \calH_p^{\op{unr}}(G)$.
\end{enumerate}
\end{df}

Recall the Langlands dual group $\widehat{G}$ of a reductive group $G$, which we
consider as a split reductive group over $\QQ$. We will mainly consider the
following cases.

\[ \begin{array}{c|cccc}
	G           & \GL_N & \Sp_{2g}   & \SO_{4g} & \SO_{2g+1} \\
	\hline
	\widehat{G} & \GL_N & \SO_{2g+1} & \SO_{4g} & \Sp_{2g}
	\end{array} \]

Recall from \cite{Satake_iso}, \cite{Gross_Satake} the Satake isomorphism: for
$F$ an algebraically closed field of characteristic zero and $G$ a reductive
group over $\ZZ$, if we choose a square root of $p$ in $F$ then $\QQ$-algebra
morphisms $\calH^{\op{unr}}_p(G) \rightarrow F$ correspond naturally and
bijectively to semisimple conjugacy classes in $\widehat{G}(F)$.

If $\pi$ is an automorphic cuspidal representation of $\GL_N(\AQ)$, then it
admits a decomposition as a restricted tensor product $\pi = \pi_{\infty}
\otimes \bigotimes'_p \pi_p$, where the last restricted tensor product is over
all prime numbers $p$. Assume moreover that $\pi$ has \emph{level one}, i.e.\
that $\pi_p^{\GL_N(\ZZ_p)} \neq 0$ for any prime $p$. Then $\pi$ has the
following invariants:
\begin{enumerate}
	\item the infinitesimal character $\op{ic}(\pi_{\infty})$, which is a
		semisimple conjugacy class in $\mathfrak{gl}_N(\CC) = M_N(\CC)$ obtained
		using the Harish-Chandra isomorphism \cite{HarishChandra_iso},
	\item for each prime number $p$, the Satake parameter $c(\pi_p)$ of the
		unramified representation $\pi_p$ of $\GL_N(\QQ_p)$, which is a semisimple
		conjugacy class in $\GL_N(\CC)$ corresponding to the character by which
		$\calH_p^{\op{unr}}(\GL_N)$ acts on the complex line $\pi_p^{\GL_N(\ZZ_p)}$.
\end{enumerate}

We now introduce three families of automorphic cuspidal representations for
general linear groups that will be exactly those contributing to intersection
cohomology of $\ab$'s.

\begin{df} \label{def:three_sets}
\begin{enumerate}
	\item For $g \geq 0$ and integers $w_1 > \dots > w_g > 0$, let $O_o(w_1,
		\dots, w_g)$ be the set of self-dual level one automorphic cuspidal
		representations $\pi = \pi_{\infty} \otimes \pi_f$ of $\GL_{2g+1}(\AQ)$ such
		that $\op{ic}(\pi_{\infty})$ has eigenvalues $w_1 > \dots > w_g > 0 > -w_g >
		\dots > -w_1$. For $\pi \in O_o(w_1, \dots, w_g)$ we let $\widehat{G_{\pi}}
		= \SO_{2g+1}(\CC)$.
	\item For $g \geq 1$ and integers $w_1 > \dots > w_{2g} > 0$, let $O_e(w_1,
		\dots, w_{2g})$ be the set of self-dual level one automorphic cuspidal
		representations $\pi = \pi_{\infty} \otimes \pi_f$ of $\GL_{4g}(\AQ)$ such
		that $\op{ic}(\pi_{\infty})$ has eigenvalues $w_1 > \dots > w_{2g} > -w_{2g}
		> \dots > -w_1$. For $\pi \in O_e(w_1, \dots, w_{2g})$ we let
		$\widehat{G_{\pi}} = \SO_{4g}(\CC)$.
	\item For $n \geq 1$ and $w_1 > \dots > w_g > 0$ with $w_i \in 1/2 + \ZZ$, let
		$S(w_1, \dots, w_g)$ be the set of self-dual level one automorphic cuspidal
		representations $\pi = \pi_{\infty} \otimes \pi_f$ of $\GL_{2g}(\AQ)$ such
		that $\op{ic}(\pi_{\infty})$ has eigenvalues $w_1 > \dots > w_g > -w_g >
		\dots > -w_1$. For $\pi \in S(w_1, \dots, w_g)$ we let $\widehat{G_{\pi}} =
		\Sp_{2g}(\CC)$.
\end{enumerate}
\end{df}
These sets are all finite by \cite[Theorem 1]{HCAuto}, and $O_o$ (resp.\ $O_e$,
$S$) is short for ``odd orthogonal'' (resp.\ ``even orthogonal'',
``symplectic''). A fact related to vanishing of cohomology with coefficients in
$\VV_{\lambda}$ for $w(\lambda)$ odd is that $O_o(w_1, \dots, w_g) = \emptyset$
if $w_1 + \dots + w_g \neq g(g+1)/2 \mod 2$ and $O_e(w_1, \dots, w_{2g}) =
\emptyset$ if $w_1 + \dots + w_{2g} \neq g \mod 2$. See \cite[Remark
4.1.6]{Taibi_dimtrace} or \cite[Proposition 1.8]{ChenevierRenard}.

\begin{rem} \label{rem:three_sets_smallg}
	For small $g$ the sets in Definition \ref{def:three_sets} are completely
	described in terms of level one (elliptic) eigenforms, due to accidental
	isomorphisms between classical groups in small rank (see \cite[\S
	6]{Taibi_dimtrace} for details).
	\begin{enumerate}
		\item For $k>0$ the set $S(k-\frac{1}{2})$ is naturally in bijection with
			the set of normalized eigenforms of weight $2k$ for $\SL_2(\ZZ)$, and
			$O_o(2k-1) \simeq S(k-\frac{1}{2})$.
		\item For integers $w_1 > w_2 > 0$ such that $w_1+w_2$ is odd, $O_e(w_1,
			w_2) \simeq S(\frac{w_1+w_2}{2}) \times S(\frac{w_1-w_2}{2})$.
	\end{enumerate}
\end{rem}

Let us recall some properties of the representations appearing in Definition
\ref{def:three_sets}.

Let $\QQ^{\op{real}}$ be the maximal totally real algebraic extension of $\QQ$
in $\CC$. Then $\QQ^{\op{real}}$ is an infinite Galois extension of $\QQ$ which
contains $\sqrt{p} > 0$ for any prime $p$.
\begin{thm}
	For $\pi$ as in Definition \ref{def:three_sets} there exists a finite
	subextension $E$ of $\QQ^{\op{real}} / \QQ$ such that for any prime number
	$p$, the characteristic polynomial of $p^{w_1} c(\pi_p)$ has coefficients in
	$E$. Moreover $c(\pi_p)$ is compact (i.e.\ power-bounded).
\end{thm}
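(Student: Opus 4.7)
The plan is to deduce both assertions from the general theory of Galois representations attached to regular algebraic self-dual cuspidal representations, combined with Deligne's purity theorem. The archimedean conditions in Definition \ref{def:three_sets} say exactly that $\pi$ is regular algebraic (in cases $O_o$ and $O_e$) or regular C-algebraic (in case $S$, because of the half-integral infinitesimal character); in all three cases the infinitesimal character of $\pi_\infty$ matches that of an irreducible algebraic representation of $\widehat{G_\pi}$, so the notion of \emph{motivic weight} $2w_1$ and the arithmetic normalization $p^{w_1} c(\pi_p)$ are the natural ones.

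Using Arthur's classification \cite{Arthur_book}, $\pi$ is the functorial transfer of a discrete automorphic representation $\pi^\flat$ of the $\ZZ$-group $H$ with $\widehat{H} = \widehat{G_\pi}$ (split symplectic or special orthogonal); regularity forces $\pi^\flat_\infty$ to be a discrete series and hence $\pi^\flat$ to be cohomological. For such a $\pi$ the collective work of Kottwitz, Clozel, Harris--Taylor, Shin, Chenevier--Harris, Caraiani and others produces, for every prime $\ell$, a continuous semisimple Galois representation
\[
\rho_{\pi,\ell} : \op{Gal}(\overline{\QQ}/\QQ) \longrightarrow \widehat{G_\pi}(\overline{\QQ}_\ell)
\]
unramified outside $\ell$, such that for every $p \neq \ell$ the conjugacy class $\rho_{\pi,\ell}(\op{Frob}_p)$, composed with the standard $N$-dimensional representation of $\widehat{G_\pi}$, has the same characteristic polynomial as $p^{w_1} c(\pi_p)$. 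Independence of $\ell$ in the compatible system $(\rho_{\pi,\ell})_\ell$ then shows that the coefficients of $\det(T - p^{w_1} c(\pi_p))$ lie in a number field $E$ depending only on $\pi$; in fact $E$ can be taken to be the field of rationality of $\pi_f$ in Clozel's sense.

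It remains to show $E \subset \QQ^{\op{real}}$ and that $c(\pi_p)$ is power-bounded. Deligne's purity theorem for $\rho_{\pi,\ell}$ (proved in these cases through the realization of $\rho_{\pi,\ell}$ in the \'etale cohomology of suitable Shimura varieties and standard weight-spectral-sequence arguments) implies that every eigenvalue $\alpha$ of $\rho_{\pi,\ell}(\op{Frob}_p)$ under the standard representation is a Weil $p$-number of weight $2w_1$, that is $|\iota(\alpha)| = p^{w_1}$ for every embedding $\iota: \overline{\QQ}_\ell \hookrightarrow \CC$; dividing by $p^{w_1}$ this gives compactness of $c(\pi_p)$. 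The hypothesis $\pi \simeq \pi^\vee$ makes $\rho_{\pi,\ell}$ self-dual in its standard representation, so its Frobenius eigenvalues are stable under $\alpha \mapsto p^{2w_1}/\alpha$; combined with purity this forces them to occur in complex-conjugate pairs under every complex embedding of $E$, whence every coefficient of the characteristic polynomial is real in every such embedding, i.e.\ $E \subset \QQ^{\op{real}}$.

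The main obstacle, and the reason this proof cites a large body of work rather than a single reference, is Step~2: the construction of $\rho_{\pi,\ell}$ with image landing in the correct dual group. This is most delicate in the case $O_e$ with $\widehat{G_\pi} = \SO_{4g}$, where one must ensure the Galois image lies in the special (rather than merely full) orthogonal group, and where Arthur's descent to $\SO_{4g}$ itself depends on the conditional hypotheses recalled in the introduction to this appendix. All other steps are essentially formal once the compatible system is available.
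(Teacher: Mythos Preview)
Your argument is correct, but it is considerably heavier than the paper's. For the first assertion the paper simply invokes Clozel's rationality theorem \cite[Th\'eor\`eme 3.13]{Clozel_motifs} for regular algebraic cuspidal representations of $\GL_N$, which already gives a number field $E$ containing the coefficients of the characteristic polynomials of $p^{w_1}c(\pi_p)$; no Galois representations are required. That $E$ may be taken totally real is then deduced purely automorphically from unitarity together with self-duality: for any Galois conjugate $\sigma(\pi)$ one still has a unitary self-dual cuspidal representation, so $\overline{\sigma(\pi_p)}\simeq\sigma(\pi_p)^\vee\simeq\sigma(\pi_p)$ and hence the characteristic polynomial is real under every complex embedding of $E$. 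Your route via purity of Frobenius eigenvalues reaches the same conclusion but uses strictly more input.

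For compactness of $c(\pi_p)$ the two approaches coincide: the paper also appeals to the Galois representations of Shin together with Clozel's purity argument. Note however that for this theorem one only needs the $\GL_N$-valued compatible system attached to a regular algebraic self-dual cuspidal $\pi$; your detour through Arthur's descent to $H$ with $\widehat{H}=\widehat{G_\pi}$ and the construction of a $\widehat{G_\pi}$-valued $\rho_{\pi,\ell}$ is unnecessary, and as you observe yourself it is precisely this refinement (especially in the even orthogonal case) that is the most delicate and most conditional step of your proposal. Dropping it removes the main obstacle you identify.
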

\begin{proof}
	That there exists a finite extension $E$ of $\QQ$ satisfying this condition is
	a special case of \cite[Th\'eor\`eme 3.13]{Clozel_motifs}. The fact that it
	can be taken totally real follows from unitarity and self-duality of $\pi$.

	The last statement is a consequence of \cite{Shin_Galois} and
	\cite{Clozel_purity}.
\end{proof}
Let $E(\pi)$ be the smallest such extension. Then $\pi_f$ is defined over
$E(\pi)$, and this structure is unique up to $\CC^{\times} / E(\pi)^{\times}$.
There is an action of the Galois group $\op{Gal}(\QQ^{\op{real}} / \QQ)$ on
$O_o(w_1, \dots)$ (resp.\ $O_e(w_1, \dots)$, $S(w_1, \dots)$): if $\pi =
\pi_{\infty} \otimes \pi_f$, $\sigma(\pi) := \pi_{\infty} \otimes \sigma(\pi_f)$
belongs to the same set. Dually we have $c(\sigma(\pi)_p) = p^{-w_1} \sigma(
p^{w_1} c(\pi_p))$. For $\pi \in O_o(w_1, \dots)$ or $O_e(w_1, \dots)$ the power
of $p$ is not necessary since $w_1 \in \ZZ$.

In all three cases $c(\pi_p)$ can be lifted to a semisimple conjugacy class in
$\widehat{G_{\pi}}(\CC)$, uniquely except in the second case, where it is unique
only up to conjugation in $\Or_{4g}(\CC)$. This is elementary. We abusively
denote this conjugacy class by $c(\pi_p)$.

We now consider the Archimedean place of $\QQ$, which will be of particular
importance for real Hodge structures. Recall that the Weil group of $\RR$ is
defined as the non-trivial extension of $\op{Gal}(\CC / \RR)$ by $\CC^{\times}$.
If $H$ is a complex reductive group and $\varphi : \CC^{\times} \rightarrow
H(\CC)$ is a continuous semisimple morphism, there is a maximal torus $T$ of $H$
such that $\varphi$ factors through $T(\CC)$ and takes the form $z \mapsto
z^{\tau_1} \bar{z}^{\tau_2}$ for uniquely determined $\tau_1, \tau_2 \in X_*(T)
\otimes_{\ZZ} \CC$ such that $\tau_1 - \tau_2 \in X_*(T)$. Here $X_*(T)$ is the
group of cocharacters of $T$ and $z^{\tau_1} \bar{z}^{\tau_2}$ is defined as
$(z/|z|)^{\tau_1-\tau_2} |z|^{\tau_1+\tau_2}$. We call the $H(\CC)$-conjugacy
class of $\tau_1$ in $\mathfrak{h} = \Lie(H)$ (complex analytic Lie algebra) the
\emph{infinitesimal character} of $\varphi$, denoted $\op{ic}(\varphi)$. If
$\varphi : W_{\RR} \rightarrow H(\CC)$ is continuous semisimple we let
$\op{ic}(\varphi) = \op{ic}(\varphi|_{\CC^{\times}})$.
		
In all three cases in Definition \ref{def:three_sets} there is a continuous
semisimple morphism $\varphi_{\pi_{\infty}} : W_{\RR} \rightarrow
\widehat{G_{\pi}}(\CC)$ such that for any $z \in \CC^{\times}$,
$\varphi_{\pi_{\infty}}(z)$ has eigenvalues \[ \begin{cases} (z/\bar{z})^{\pm
		w_1}, \dots, (z/\bar{z})^{\pm w_g}, 1 & \text{ if } \pi \in O_o(w_1, \dots,
		w_g), \\ (z/\bar{z})^{\pm w_1}, \dots, (z/\bar{z})^{\pm w_{2g}} & \text{ if
		} \pi \in O_e(w_1, \dots, w_{2g}), \\ (z/|z|)^{\pm 2 w_1}, \dots,
(z/|z|)^{\pm 2 w_g} & \text{ if } \pi \in S(w_1, \dots, w_g), \end{cases} \] in
the standard representation of $\widehat{G_{\pi}}$. The parameter
$\varphi_{\pi_{\infty}}$ is characterized up to conjugacy by this property
except in the second case where it is only characterized up to
$\Or_{4g}(\CC)$-conjugacy. To rigidify the situation we choose a semisimple
conjugacy class $\tau_{\pi}$ in the Lie algebra of $\widehat{G_{\pi}}$ whose
image via the standard representation has eigenvalues $\pm w_1, \dots, \pm w_g,
0$ (resp.\ $\pm w_1, \dots, \pm w_{2g}$, resp.\ $\pm w_1, \dots, \pm w_g$). Then
the pair $(\widehat{G_{\pi}}, \tau_{\pi})$ is well-defined up to isomorphism
unique up to conjugation by $\widehat{G_{\pi}}$, since in the even orthogonal
case $\tau_{\pi}$ is not fixed by an outer automorphism of $\widehat{G_{\pi}}$.
Up to conjugation by $\widehat{G_{\pi}}$ there is a unique
$\varphi_{\pi_{\infty}} : W_{\RR} \rightarrow \widehat{G_{\pi}}(\CC)$ as above
and such that $\op{ic}(\varphi_{\pi_{\infty}}) = \tau_{\pi}$. In all cases the
centralizer of $\varphi_{\pi_{\infty}}(W_{\RR})$ in $\widehat{G_{\pi}}(\CC)$ is
finite.

Let us now indicate how the general definition of substitutes for
Arthur-Langlands parameters in \cite{Arthur_book} specializes to the case at
hand.

\begin{df} \label{def:AL_param}
Let $V_{\lambda}$ be an irreducible algebraic representation of $\Sp_{2g}$,
given by the dominant weight $\lambda = (\lambda_1 \geq \dots \geq \lambda_g
\geq 0)$. Let $\rho$ be half the sum of the positive roots for $\Sp_{2g}$, and
$\tau = \lambda + \rho = (w_1 > \dots > w_g > 0)$ where $w_i = \lambda_i + n+1-i
\in \ZZ$, which we can see as the regular semisimple conjugacy class in
$\mathfrak{so}_{2g+1}(\CC)$. Let $\Psi_{\op{disc}}^{\op{unr}, \tau}(\Sp_{2g})$
be the set of pairs $(\psi_0, \{ \psi_1, \dots, \psi_r \} )$ with $r \geq 0$ and
such that
\begin{enumerate}
	\item $\psi_0 = (\pi_0, d_0)$ where $\pi_0 \in O_o(w_1^{(0)}, \dots,
		w_{g_0}^{(0)})$ and $d_0 \geq 1$ is an odd integer,
	\item The $\psi_i$'s, for $i \in \{1, \dots, r\}$, are distinct pairs $(\pi_i,
		d_i)$ where $d_i \geq 1$ is an integer and $\pi_i \in O_e(w_1^{(i)}, \dots,
		w_{g_i}^{(i)})$ with $g_i$ even (resp.\ $\pi_i \in S(w_1^{(i)}, \dots,
		w_{g_i}^{(i)})$) if $d_i$ is odd (resp.\ even).
	\item $2g+1 = (2g_0+1) d_0 + \sum_{i=1}^r 2 g_i d_i$,
	\item The sets
		\begin{enumerate}
			\item $\{ \frac{d_0-1}{2}, \frac{d_0-3}{2}, \dots, 1 \}$,
			\item $\{ w_1^{(0)} + \frac{d_0-1}{2} - j, \dots, w^{(0)}_{g_0} +
				\frac{d_0-1}{2} -j \}$ for $j \in \{0, \dots, d_0 - 1\}$,
			\item $\{ w_1^{(i)} + \frac{d_i-1}{2} - j, \dots, w_{g_i}^{(i)} +
				\frac{d_i-1}{2} -j \}$ for $i \in \{ 1, \dots, r \}$ and $j \in \{ 0,
				\dots, d_i-1 \}$
		\end{enumerate}
		are disjoint and their union equals $\{ w_1, \dots, w_g \}$.
\end{enumerate}
We will write more simply
\[ \psi = \psi_0 \boxplus \dots \boxplus \psi_r = \pi_0[d_0] \boxplus \dots
	\boxplus \pi_r[d_r]. \]
This could be defined as an ``isobaric sum'', but for the purpose of this
appendix we can simply consider this expression as a formal unordered sum. If
$\pi_0$ is the trivial representation of $\GL_1(\AQ)$, we write $[d_0]$ for
$1[d_0]$, and when $d=1$ we simply write $\pi$ for $\pi[1]$.
\end{df}

\begin{exa}
	\begin{enumerate}
		\item For any $g \geq 1$, $[2g+1] \in \Psi_{\op{disc}}^{\op{unr},
			\rho}(\Sp_{2g})$.
		\item $[9] \boxplus \Delta_{11}[2] \in \Psi_{\op{disc}}^{\op{unr},
			\rho}(\Sp_{12})$ where $\Delta_{11} \in S(\frac{11}{2})$ is the
			automorphic representation of $\GL_2(\AQ)$ corresponding to the Ramanujan
			$\Delta$ function.
		\item For $g \geq 1$, $\tau = (w_1 > \dots > w_g > 0)$, for any $\pi \in
			O_o(w_1, \dots, w_g)$ we have $\pi = \pi[1] \in
			\Psi_{\op{disc}}^{\op{unr}, \tau}(\Sp_{2g})$.
	\end{enumerate}
\end{exa}

\begin{df} \label{def:Lpsi}
	\begin{enumerate}
		\item For $\psi = \pi_0[d_0] \boxplus \dots \boxplus \pi_r[d_r] \in
			\Psi_{\op{disc}}^{\op{unr}, \tau}(\Sp_{2g})$, let $\calL_{\psi} =
			\prod_{i=0}^r \widehat{G_{\pi_i}}(\CC)$. Let $\dot{\psi} : \calL_{\psi}
			\times \SL_2(\CC) \rightarrow \SO_{2n+1}(\CC)$ be a morphism such that
			composing with the standard representation gives $\bigoplus_{0 \leq i \leq
			r} \op{Std}_{\widehat{G_{\pi_i}}} \otimes \nu_{d_i}$ where
			$\op{Std}_{\widehat{G_{\pi_i}}}$ is the standard representation of
			$\widehat{G_{\pi_i}}$ and $\nu_{d_i}$ is the irreducible representation of
			$\SL_2(\CC)$ of dimension $d_i$. Then $\dot{\psi}$ is well-defined up to
			conjugation by $\SO_{2n+1}(\CC)$.
		\item Let $\calS_{\psi}$ be the centralizer of $\dot{\psi}$ in
			$\SO_{2g+1}(\CC)$, which is isomorphic to $(\ZZ/2\ZZ)^r$. A basis is given
			by $(s_i)_{1 \leq i \leq r}$ where $s_i$ is the image by $\dot{\psi}$ of
			the non-trivial element in the center of $\widehat{G_{\pi_i}}$.
		\item Let $\psi_{\infty}$ be the morphism $W_{\RR} \times \SL_2(\CC)
			\rightarrow \SO_{2g+1}(\CC)$ obtained by composing $\dot{\psi}$ with the
			morphisms $\varphi_{\pi_{i, \infty}} : W_{\RR} \rightarrow
			\widehat{G_{\pi_i}}(\CC)$. The centralizer $\calS_{\psi_{\infty}}$
			of $\psi_{\infty}$ in $\SO_{2g+1}(\CC)$ contains $\calS_{\psi}$ and
			is isomorphic to $(\ZZ / 2 \ZZ )^x$ where $x = \sum_{i=1}^r g_i$.
		\item For $p$ a prime let $c_p(\psi)$ be the image of $((c(\pi_{i,p}))_{0
			\leq i \leq r}, \op{diag}(p^{1/2}, p^{-1/2}))$ by $\dot{\psi}$, a
			well-defined semisimple conjugacy class in $\SO_{2g+1}(\CC)$. Let
			$\chi_p(\psi) : \calH_p^{\op{unr}}(\Sp_{2g}) \rightarrow \CC$ be the
			associated character.
		\item Let $\chi_f(\psi) = \prod_p \chi_p(\psi) :
			\calH_f^{\op{unr}}(\Sp_{2g}) \rightarrow \CC$ be the product of the
			$\chi_p(\psi)$'s. It takes values in the smallest subextension $E(\psi)$
			of $\QQ^{\op{real}}$ containing $E(\pi_0), \dots, E(\pi_r)$, which is also
			finite over $\QQ$. For any prime $p$ the characteristic polynomial of
			$c_p(\psi)$ has coefficients in $E(\psi)$. In particular we have a
			continuous action of $\op{Gal}(\QQ^{\op{real}} / \QQ)$ on
			$\Psi_{\op{disc}}^{\op{unr}, \tau}(\Sp_{2g})$ which is compatible with
			$\chi_f$.
	\end{enumerate}
\end{df}

The following theorem is a consequence of \cite{JacquetShalika_Euler2}.
\begin{thm} \label{thm:mult_one}
	For any $g \geq 1$ the map $(\lambda, \psi \in \Psi_{\op{disc}}^{\op{unr},
	\tau}(\Sp_{2g}) ) \mapsto \chi_f(\psi)$ is injective. 
\end{thm}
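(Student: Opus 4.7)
My plan is to reduce the injectivity statement to the strong multiplicity one theorem of Jacquet--Shalika for isobaric automorphic representations of $\GL_N(\AQ)$, and then to show that the combinatorial reconstruction of the pair $(\lambda, \psi)$ from the associated isobaric sum is unforced by the constraints built into Definition \ref{def:AL_param}.

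First, I would observe that $\chi_f(\psi)$ determines, via the Satake isomorphism at each prime $p$, the semisimple conjugacy class $c_p(\psi) \in \SO_{2g+1}(\CC)$. Composing with the standard representation $\SO_{2g+1} \hookrightarrow \GL_{2g+1}$, the eigenvalue multiset of $c_p(\psi)$ is, by construction in Definition \ref{def:Lpsi},
\[
\bigsqcup_{i=0}^{r} \bigsqcup_{j=0}^{d_i-1} \Big\{ p^{(d_i-1)/2-j}\,\alpha \;:\; \alpha \text{ an eigenvalue of } c(\pi_{i,p}) \Big\}.
\]
This is precisely the Satake parameter at $p$ of the isobaric automorphic representation
\[
\Pi(\psi) \;:=\; \boxplus_{i=0}^{r}\boxplus_{j=0}^{d_i-1} \pi_i\,|\det|^{(d_i-1)/2-j}
\]
of $\GL_{2g+1}(\AQ)$, each constituent of which is a unitary cuspidal representation twisted by an integral or half-integral power of $|\det|$.

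Next, by the strong multiplicity one theorem of Jacquet and Shalika \cite{JacquetShalika_Euler2}, the isobaric representation $\Pi(\psi)$ is determined, up to rearrangement of its cuspidal summands, by its Satake parameters at almost all finite places. In particular, the data of $\chi_f(\psi)$ recovers the unordered multiset of twisted cuspidals $\{\pi_i\,|\det|^{(d_i-1)/2-j}\}_{i,j}$. To recover the unordered collection $\{(\pi_i,d_i)\}_{i=0}^r$ from this multiset, I would argue as follows: the underlying cuspidal representation $\pi_i$ of $\GL_{n_i}(\AQ)$ is characterized by its infinitesimal character (up to the discrete datum given by its Satake parameters at the remaining primes), so distinct twists of the same $\pi_i$ are unambiguously identified inside the multiset. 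For each cuspidal $\pi$ appearing, the set of twists that occur is a disjoint union of symmetric arithmetic progressions $\{-(d-1)/2, -(d-3)/2, \ldots, (d-1)/2\}$ with step one, and the partition into such progressions is forced by the requirement that the pairs $(\pi_i,d_i)$ be \emph{distinct} (no symmetric progression can repeat) and by the parity constraints on $d_i$ according to the type of $\pi_i$ (odd when $\pi_i \in O_e$, even when $\pi_i \in S$); these parity constraints match the fact that $O_e$ and $O_o$ parameters have integer weights whereas $S$ parameters have half-integer weights, so the two kinds of progressions sit in disjoint cosets of $\ZZ$ in $\tfrac{1}{2}\ZZ$.

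Finally, once $\psi = \pi_0[d_0] \boxplus \cdots \boxplus \pi_r[d_r]$ has been reconstructed, its Archimedean infinitesimal character $\tau$ is computed from the shifted infinitesimal characters of the $\pi_i$'s exactly as prescribed in Definition \ref{def:AL_param}(iv); since $\tau = \lambda + \rho$ and $\rho$ is fixed, this recovers $\lambda$. The main obstacle in this plan is genuinely the invocation of Jacquet--Shalika in step two, which is the deep input; the remaining combinatorial step is straightforward given the disjointness constraint in Definition \ref{def:AL_param}(iv), which is precisely designed to make the decomposition of the associated isobaric sum into Arthur blocks unique.
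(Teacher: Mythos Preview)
Your proposal is correct and follows exactly the approach the paper intends: the paper's entire proof is the one-line attribution ``a consequence of \cite{JacquetShalika_Euler2}'', and you have correctly unpacked this into the reduction to strong multiplicity one for isobaric representations of $\GL_{2g+1}(\AQ)$ followed by the combinatorial reconstruction of $\psi$ and then $\lambda = \tau - \rho$.

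One small simplification: your reconstruction step is more elaborate than necessary. Condition (iv) in Definition~\ref{def:AL_param} actually forces the underlying cuspidals $\pi_0,\dots,\pi_r$ to be pairwise \emph{distinct} (not merely the pairs $(\pi_i,d_i)$): if $\pi_i=\pi_j$ with $d_i<d_j$, the shifted-weight intervals around any $w_k^{(i)}$ for $d_i$ and $d_j$ are nested, contradicting disjointness. Hence each unitary cuspidal appearing in the Jacquet--Shalika decomposition of $\Pi(\psi)$ comes with a single arithmetic progression of twists, and $d_i$ is simply its length. Your more general peeling argument (distinct $d$'s per $\pi$) is valid but not needed here. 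Also, the sentence about $\pi_i$ being ``characterized by its infinitesimal character (up to the discrete datum\dots)'' is awkward; the cleaner statement is that from $\sigma = \pi\,|\det|^s$ with $\pi$ unitary self-dual one recovers $s$ (hence $\pi$) from the condition that $\sigma|\det|^{-s}$ be unitary, or equivalently from $\sigma^\vee \simeq \sigma\,|\det|^{-2s}$.
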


The last condition in Definition \ref{def:AL_param} is explained by compatibility
with infinitesimal characters, stated after the following definition.
\begin{df} \label{def:delta}
	Let
	\begin{align*}
		\delta_{\infty} : \CC^{\times} & \longrightarrow \CC^{\times} \times
			\SL_2(\CC) \\
		z & \longmapsto (z, \op{diag}(||z||^{1/2}, ||z||^{-1/2})).
	\end{align*}
	For a complex reductive group $H$ and a morphism $\psi_{\infty} : \CC^{\times}
	\times \SL_2(\CC) \rightarrow H(\CC)$ which is continuous semisimple and
	algebraic on $\SL_2(\CC)$, let $\op{ic}(\psi_{\infty}) = \op{ic}(\psi_{\infty}
	\circ \delta_{\infty})$. Similarly, if $\psi_{\infty} : W_{\RR} \times
	\SL_2(\CC) \rightarrow H(\CC)$, let $\op{ic}(\psi_{\infty}) =
	\op{ic}(\psi_{\infty}|_{\CC^{\times}})$.
\end{df}
For $\psi \in \Psi_{\op{disc}}^{\op{unr}, \tau}(\Sp_{2g})$, we have
$\op{ic}(\psi_{\infty}) = \tau$ (equality between semisimple conjugacy classes
in $\mathfrak{so}_{2g+1}(\CC)$), and this explains the last condition in
Definition \ref{def:AL_param}.

For $\psi$ as above Arthur constructed \cite[Theorem 1.5.1]{Arthur_book} a
finite set $\Pi(\psi_{\infty})$ of irreducible unitary representations of
$\Sp_{2g}(\RR)$ and a map $\Pi(\psi_{\infty}) \rightarrow
\calS_{\psi_{\infty}}^{\vee}$, where $A^{\vee} = \Hom(A, \CC^{\times})$. We
simply denote this map by $\pi_{\infty} \mapsto \langle \cdot, \pi_{\infty}
\rangle$. Arthur also defined a character $\epsilon_{\psi}$ of $\calS_{\psi}$,
in terms of symplectic root numbers. We do not recall the definition, but note
that for everywhere unramified parameters considered here, this character can be
computed easily from the infinitesimal characters of the $\pi_i$'s (see \cite[\S
3.9]{ChenevierRenard}).

We can now formulate the specialization of \cite[Theorem 1.5.2]{Arthur_book} to
level one and algebraic regular infinitesimal character, and its consequence for
$L^2$-cohomology thanks to \cite{BorelCasselman_L2}.
\begin{thm} \label{thm:mult_formula}
	Let $g \geq 1$. Let $V_{\lambda}$ be an irreducible algebraic representation
	of $\Sp_{2g}$ with dominant weight $\lambda$. Let $\tau = \lambda + \rho$. The
	part of the discrete automorphic spectrum for $\Sp_{2g}$ having level
	$\Sp_{2g}(\widehat{\ZZ})$ and infinitesimal character $\tau$ decomposes as a
	completed orthogonal direct sum
	\[ L^2_{\op{disc}}( \Sp_{2g}(\QQ) \backslash \Sp_{2g}(\AQ) /
		\Sp_{2g}(\widehat{\ZZ}))^{\op{ic} = \tau} \simeq \bigoplus_{\psi \in
		\Psi_{\op{disc}}^{\op{unr}, \tau}(\Sp_{2g})}
		\bigoplus_{\substack{\pi_{\infty} \in \Pi(\psi_{\infty}) \\ \langle
		\cdot, \pi_{\infty} \rangle = \epsilon_{\psi}}} \pi_{\infty} \otimes
		\chi_f(\psi). \]
	Therefore
	\[ H^{\bullet}_{(2)}(\ab, \VV_{\lambda}) \simeq \bigoplus_{\psi \in
		\Psi_{\op{disc}}^{\op{unr}, \tau}(\Sp_{2g})}
		\bigoplus_{\substack{\pi_{\infty} \in \Pi(\psi_{\infty}) \\ \langle \cdot,
		\pi_{\infty} \rangle = \epsilon_{\psi}}} H^{\bullet}((\mathfrak{g}, K),
		\pi_{\infty} \otimes V_{\lambda}) \otimes \chi_f(\psi)
	\]
	where as before $\mathfrak{g} = \mathfrak{sp}_{2g}(\CC)$ and $K = U(g)$.
\end{thm}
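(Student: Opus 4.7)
The plan is to derive the theorem as a direct specialization of Arthur's endoscopic classification \cite[Theorem 1.5.2]{Arthur_book} to the everywhere-unramified setting with fixed regular algebraic infinitesimal character $\tau = \lambda + \rho$, and then to combine this with the Borel--Casselman description \cite{BorelCasselman_L2} of $L^2$-cohomology as $(\mathfrak{g}, K)$-cohomology of the discrete spectrum. The data $\Psi_{\op{disc}}^{\op{unr}, \tau}(\Sp_{2g})$, $\calS_\psi$, $\epsilon_\psi$, and $\chi_f(\psi)$ from Definitions \ref{def:AL_param}--\ref{def:Lpsi} have already been set up so as to be the specialization of Arthur's general formalism to our context; it therefore only remains to verify that Arthur's global multiplicity formula specializes as stated.

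First I would unpack Arthur's global multiplicity formula. For each such $\psi$, the associated global packet is a restricted tensor product $\Pi(\psi) = \bigotimes'_v \Pi(\psi_v)$, and an irreducible $\pi = \bigotimes'_v \pi_v$ with each $\pi_v \in \Pi(\psi_v)$ occurs in $L^2_{\op{disc}}$ with multiplicity one when $\prod_v \langle \cdot, \pi_v \rangle = \epsilon_\psi$ on $\calS_\psi$, and does not occur otherwise. At each finite prime $p$, since $\psi_p$ is unramified, the local packet $\Pi(\psi_p)$ contains a unique spherical representation $\pi_p^{\op{unr}}$, determined by the Satake class $c_p(\psi)$; a key feature of Arthur's local classification for unramified parameters is that the character $\langle \cdot, \pi_p^{\op{unr}} \rangle$ is trivial on (the image in $\calS_\psi$ of) $\calS_{\psi_p}$. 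Consequently the global sign condition collapses to the archimedean condition $\langle \cdot, \pi_\infty \rangle = \epsilon_\psi$, while $\calH_f^{\op{unr}}(\Sp_{2g})$ acts on the spherical line $\bigotimes_p (\pi_p^{\op{unr}})^{\Sp_{2g}(\ZZ_p)}$ through the character $\chi_f(\psi)$ of Definition \ref{def:Lpsi}. Summing over $\psi$ and regrouping yields the first isomorphism.

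For the cohomological statement I would invoke the theorem of Borel and Casselman \cite{BorelCasselman_L2}, which identifies
\[ H^{\bullet}_{(2)}(\ab, \VV_\lambda) \cong \bigoplus_{\pi} m_{\op{disc}}(\pi) \, H^{\bullet}\bigl((\mathfrak{g}, K), \pi_\infty \otimes V_\lambda\bigr) \otimes \pi_f^{\Sp_{2g}(\widehat{\ZZ})}, \]
the sum ranging over irreducible constituents $\pi$ of the full discrete automorphic spectrum. Wigner's lemma forces the $(\mathfrak{g}, K)$-cohomology to vanish unless the infinitesimal character of $\pi_\infty$ coincides with that of $V_\lambda^{\vee}$, i.e.\ with $\tau$, which is exactly the restriction to $\psi \in \Psi_{\op{disc}}^{\op{unr}, \tau}(\Sp_{2g})$. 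Substituting the first decomposition into the Borel--Casselman formula then produces the second.

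The main obstacle, and indeed the deep input on which the argument rests, is Arthur's endoscopic classification itself, and in particular the triviality of the local pairing $\langle \cdot, \pi_p^{\op{unr}} \rangle$ on $\calS_{\psi_p}$ at unramified finite places, which is what reduces the global multiplicity formula to a condition purely at infinity. This is part of Arthur's local classification in \cite{Arthur_book}, whose full proof still depends on several announced but not yet fully available results (see \cite[\S 1.3]{Taibi_dimtrace}). Granting this input, the remainder of the argument is a bookkeeping exercise translating the data of $\psi$ through the unramified Satake correspondence into $\chi_f(\psi)$ and through $(\mathfrak{g}, K)$-cohomology into the individual $L^2$-cohomology groups.
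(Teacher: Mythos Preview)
Your proposal is correct and follows essentially the same route as the paper: specialize \cite[Theorem 1.5.2]{Arthur_book} to level one (the paper cites \cite[Lemma 4.1.1]{Taibi_dimtrace} for exactly the reduction you spell out, namely that at unramified finite places the spherical member of the local packet has trivial pairing, so the global sign condition collapses to infinity and the Hecke action is through $\chi_f(\psi)$), then apply \cite{BorelCasselman_L2} and Wigner's lemma to pass to $L^2$-cohomology. Your write-up is in fact more detailed than the paper's one-sentence pointer to these references.
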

To be more precise the specialization of Arthur's theorem relies on
\cite[Lemma 4.1.1]{Taibi_dimtrace} and its generalization giving the Satake
parameters, considering traces of arbitrary elements of the unramified Hecke
algebra.

Thanks to \cite{AMR} for $\psi \in \Psi_{\op{disc}}^{\op{unr}, \tau}(\Sp_{2g})$
the sets $\Pi(\psi_{\infty})$ and characters $\langle \cdot, \pi_{\infty}
\rangle$ for $\pi_{\infty} \in\Pi(\psi_{\infty})$ are known to coincide
with those constructed by Adams and Johnson in \cite{AdJo}. Furthermore, the
cohomology groups
\begin{equation} \label{equ:gK_coh}
	H^{\bullet}((\mathfrak{g}, K), \pi_{\infty} \otimes V_{\lambda})
\end{equation}
for $\pi_{\infty} \in \Pi(\psi_{\infty})$ were computed explicitly in
\cite[Proposition 6.19]{VoZu}, including the real Hodge structure.

Thus in principle one can compute (algorithmically) the dimensions of the
cohomology groups $H^i_{(2)}(\ab, \VV_{\lambda})$ if the cardinalities
of the sets $O_o(\dots)$, $O_e(\dots)$ and $S(\dots)$ are known. As
explained in the previous section, for small $g$ the Euler characteristic
$e_{(2)}(\ab, \VV_{\lambda})$ can be evaluated using the trace formula. For $\pi
\in O_o(\tau) \subset \Psi_{\op{disc}}^{\op{unr}, \tau}(\Sp_{2g})$, the
contribution of $\pi$ to the Euler characteristic expanded using Theorem
\ref{thm:mult_formula} is simply $(-1)^{g(g+1)/2} 2^g \neq 0$. The contributions
of other elements of $\Psi_{\op{disc}}^{\op{unr}, \tau}(\Sp_{2g})$ to
$e_{(2)}(\ab, \VV_{\lambda})$ can be evaluated inductively, using the trace
formula and the analogue of Theorem \ref{thm:mult_formula} also for the groups
$\Sp_{2g'}$ for $g'<g$, $\SO_{4m}$ for $m \leq g/2$ and $\SO_{2m+1}$ for $m \leq
g/2$. We refer to \cite{Taibi_dimtrace} for details, and simply emphasize that
computing the contribution of some $\psi$ to the Euler characteristic is much
easier than computing all dimensions using \cite{VoZu}. For example for $\psi
\in \Psi_{\op{disc}}^{\op{unr}, \tau}(\Sp_{2g})$ we have
\begin{equation} \label{equ:simple_Euler}
	\sum_{\substack{\pi_{\infty} \in \Pi(\psi_{\infty}) \\ \langle \cdot,
	\pi_{\infty} \rangle|_{\calS_{\psi}} = \epsilon_{\psi}}} e(
	(\mathfrak{sp}_{2g}, K), \pi_{\infty} \otimes V_{\lambda}) = \pm 2^{g-r}
\end{equation}
where the integer $r$ is as in Definition \ref{def:AL_param} and the sign is
more subtle but easily computable. To sum up, using the trace formula, we
obtained tables of cardinalities for the three families of sets in Definition
\ref{def:three_sets}. See \cite{onlinelistdimtrace}.

For small $\lambda$, more precisely for $g + \lambda_1 \leq 11$, there is
another way to enumerate all elements of $\Psi_{\op{disc}}^{\op{unr},
\tau}(\Sp_{2g})$, which still relies on some computer calculations, but much
simpler ones and of a very different nature. The following is a consequence of
\cite[Th\'eor\`eme 3.3]{CheLan}. The proof uses the Riemann-Weil explicit
formula for automorphic L-functions, and follows work of Stark, Odlyzko and
Serre for zeta functions of number fields giving lower bound of their
discriminants, and of Mestre, Fermigier and Miller for L-functions of
automorphic representations. The striking contribution of \cite[Th\'eor\`eme
3.3]{CheLan} is the fact that the rank of the general linear group is not
bounded a priori, but for the purpose of the present appendix we impose
\emph{regular} infinitesimal characters.
\begin{thm} \label{thm:small_weight}
	For $w_1 \leq 11$, the only non-empty $O_o(w_1, \dots)$, $O_e(w_1, \dots)$ or
	$S(w_1, \dots)$ are the following.
	\begin{enumerate}
		\item For $g=0$, $O_o() = \{ 1 \}$.
		\item For $2 w_1 \in \{ 11, 15, 17, 19, 21 \}$, $S(w_1) = \{ \Delta_{2 w_1}
			\}$ where $\Delta_{2 w_1}$ corresponds to the unique eigenform in
			$S_{2w_1+1}(\SL_2(\CC)(\ZZ))$.
		\item $O_o(11) = \{ \Sym^2 ( \Delta_{11} ) \}$ ($\Sym^2$ functoriality was
			constructed in \cite{GelbartJacquet}),
		\item For $(2w_1, 2w_2) \in \{ (19, 7), (21, 5), (21, 9), (21, 13) \}$,
			$S(w_1, w_2) = \{ \Delta_{2w_1, 2w_2} \}$. These correspond to certain
			Siegel eigenforms in genus two and level one.
	\end{enumerate}
\end{thm}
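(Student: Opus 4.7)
The plan is to read off this classification from Chenevier and Lannes' result \cite[Th\'eor\`eme 3.3]{CheLan}, which enumerates \emph{all} self-dual level-one cuspidal algebraic automorphic representations of $\GL_N(\AQ)$ (for arbitrary $N$) whose infinitesimal character is regular of motivic weight $\leq 22$. Every element of $O_o(w_1,\dots)$, $O_e(w_1,\dots)$, or $S(w_1,\dots)$ is such a representation, of motivic weight $2w_1$, so the hypothesis $w_1 \leq 11$ places us squarely in the range to which \emph{loc.\ cit.}\ applies. What remains is to match the Chenevier--Lannes list against the list in the statement and to exhibit a representation in each surviving case.

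The hard direction, which is the content of \cite{CheLan}, is the non-existence: ruling out any self-dual cuspidal $\pi$ of level one and motivic weight $\leq 22$ whose set of Hodge--Tate weights is not one of those enumerated. The main tool is the Weil--Mestre explicit formula applied to $L(s,\pi)$ and to the Rankin--Selberg convolution $L(s,\pi\otimes\pi^\vee)$, evaluated against a carefully chosen non-negative Paley--Wiener test function. Self-duality (which controls the sign of the functional equation and the reality of Satake parameters), non-negativity of $L(s,\pi\otimes\pi^\vee)$ on $(0,1)$ from Rankin--Selberg theory, and unitarity bounds on Satake parameters together yield numerical inequalities on the archimedean infinitesimal character. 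For motivic weight $\leq 22$ these inequalities leave only the discrete list claimed. This is the step I expect to be the main obstacle: it is the culmination of a long line of work starting with Stark, Odlyzko and Serre for Dedekind zeta functions, and continuing with Mestre, Fermigier and Miller for automorphic $L$-functions, and the numerical calibration of the test functions is delicate.

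Granting the Chenevier--Lannes bound, the existence direction amounts to exhibiting one representation in each allowed case. For $g=0$ the trivial character of $\GL_1(\AQ)$ is the unique element of $O_o()$. For $2w_1 \in \{11,15,17,19,21\}$ the space $S_{2w_1+1}(\SL_2(\ZZ))$ is one-dimensional (while $\dim S_k(\SL_2(\ZZ)) = 0$ for even $k \in \{2,4,6,8,10,14\}$), and by Remark~\ref{rem:three_sets_smallg}(i) its unique normalized eigenform $\Delta_{2w_1+1}$ corresponds to the unique element of $S(w_1)$. The non-empty set $O_o(11)$ is then accounted for by $\Sym^2(\Delta_{11})$, the Gelbart--Jacquet symmetric-square functorial lift \cite{GelbartJacquet} of $\Delta_{11} \in S(\tfrac{11}{2})$: cuspidality holds because $\Delta_{11}$ is non-dihedral, the lift is everywhere unramified, and the infinitesimal character has the prescribed eigenvalues $\{\pm 11, 0\}$. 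For the four pairs $(2w_1,2w_2) \in \{(19,7),(21,5),(21,9),(21,13)\}$, the representations $\Delta_{2w_1,2w_2}$ come from vector-valued Siegel cusp eigenforms of genus two and level one, whose existence and uniqueness follow from explicit dimension formulae (see the references in \cite{CheLan}).

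Finally, uniqueness of the representation in each listed set follows from multiplicity one for $\GL_N$, as recalled in Theorem~\ref{thm:mult_one}, combined with the one-dimensionality of each of the spaces of classical and Siegel cusp eigenforms involved. Hence each of the listed sets is indeed a singleton, completing the classification.
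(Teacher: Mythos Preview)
Your approach is essentially identical to the paper's: the paper presents Theorem~\ref{thm:small_weight} as a direct consequence of \cite[Th\'eor\`eme~3.3]{CheLan}, noting that the proof uses the Riemann--Weil explicit formula in the tradition of Stark, Odlyzko, Serre, Mestre, Fermigier, and Miller, with the novelty in \cite{CheLan} being that the rank of $\GL_N$ is not bounded a priori. You supply additional detail the paper omits (the existence direction, the role of the Gelbart--Jacquet lift, and the uniqueness argument), but the core reduction to Chenevier--Lannes is the same. One minor slip: you write $\Delta_{2w_1+1}$ for the eigenform, whereas the paper's convention is $\Delta_{2w_1}$ for the eigenform of weight $2w_1+1$.
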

Of course this is coherent with our tables.

As a result, for $g + \lambda_1 \leq 11$, i.e\ in
\[ \sum_{g=1}^{11} \op{card} \left\{ \lambda \,\middle|\, w(\lambda) \text{ even
and } g+ \lambda_1 \leq 11 \right\} = 1055 \]
non-trivial cases, $\Psi_{\op{disc}}^{\op{unr}, \tau}(\Sp_{2g})$ can be described
explicitly in terms of the $11$ automorphic representations of general linear
groups appearing in Theorem \ref{thm:small_weight}. In most of these cases
$\Psi_{\op{disc}}^{\op{unr}, \tau}(\Sp_{2g})$ is just empty, in fact
\[ \sum_{g + \lambda_1 \leq 11} \op{card} \Psi_{\op{disc}}^{\op{unr},
\tau}(\Sp_{2g}) = 197 \]
with $146$ non-vanishing terms.

\begin{cor} \label{cor:IHSat3V11}
	For $g=3$ and $\lambda = (1,1,0)$ we have
	\[ IH^{\bullet}( \Sat[3], \VV_{\lambda}) = 0. \]
\end{cor}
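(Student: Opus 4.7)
The plan is to combine Zucker's conjecture, the multiplicity formula of Theorem \ref{thm:mult_formula}, and the small-weight classification of Theorem \ref{thm:small_weight}. First I would apply Zucker's conjecture to identify $IH^{\bullet}(\Sat[3], \VV_{\lambda})$ with $H^{\bullet}_{(2)}(\ab[3], \VV_{\lambda})$. For $\lambda = (1, 1, 0)$ we have $\rho = (3, 2, 1)$, so $\tau = \lambda + \rho = (4, 3, 1)$. Theorem \ref{thm:mult_formula} then expresses the $L^2$-cohomology as a direct sum indexed by $\Psi_{\op{disc}}^{\op{unr}, (4,3,1)}(\Sp_6)$, reducing the corollary to the assertion that this set is empty.

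Next I would exploit the fact that every weight $w_j^{(i)}$ occurring in any such $\psi$ is bounded by $4 \leq 11$, so Theorem \ref{thm:small_weight} provides a complete list of candidates for each building block $\pi_i$. In particular, no $O_o(w_1, \ldots, w_{g_0})$ with $g_0 \geq 1$ and $w_1 \leq 4$ is non-empty (the only small-weight $O_o$-sets in the classification are $O_o()$ and $O_o(11)$); no $S(w_1)$ with $2 w_1 \leq 9$ is non-empty; and by Remark \ref{rem:three_sets_smallg}(2) no $O_e(w_1, w_2)$ with $w_1 \leq 4$ is non-empty either, since the associated $S$-factors all have argument with $2\cdot\text{argument} \leq 7$.

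The final step is a short case analysis on the pair $(d_0, g_0)$ with $d_0$ odd and $(2 g_0 + 1) d_0 \leq 7$, observing the parity rule from Definition \ref{def:AL_param}(2) (namely that $d_i$ odd forces $g_i$ even for $\pi_i \in O_e$). For $(d_0, g_0) = (7, 0)$ and $(5, 0)$, the set (a) of Definition \ref{def:AL_param}(4) is $\{3, 2, 1\}$ and $\{2, 1\}$ respectively, neither a subset of $\{4, 3, 1\}$. For $(1, 3)$, $(1, 2)$ and $(1, 1)$ the required $O_o$-set is empty by the previous paragraph. The richest case is $(3, 0)$, where set (a) equals $\{1\}$ and the remaining $\{4, 3\}$ must come from a single $\pi_1[d_1]$ with $2 g_1 d_1 = 4$; the only options are $\pi_1 \in O_e(4, 3) \simeq S(7/2) \times S(1/2)$ or $\pi_1 \in S(7/2)$, all empty. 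Finally $(1, 0)$ requires $\{4, 3, 1\}$ to be partitioned by sets of type (c) with $\sum 2 g_i d_i = 6$; every admissible factorization contains a term with $g_i = d_i = 1$, which violates the parity rule.

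The main obstacle is simply making the case analysis exhaustive and tracking the parity constraints carefully; no individual case is hard once Theorem \ref{thm:small_weight} is in hand. As a sanity check, the same vanishing can be read off the tables of cardinalities of $O_o$, $O_e$ and $S$ computed in Section \ref{sec:app_TF_comp} from the trace formula, cf.\ \cite{onlinelistdimtrace}.
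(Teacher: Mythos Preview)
Your proof is correct and follows the same approach as the paper: reduce to showing $\Psi_{\op{disc}}^{\op{unr},(4,3,1)}(\Sp_6)=\emptyset$ via Theorem~\ref{thm:mult_formula} (together with Zucker's conjecture), and then verify this emptiness using the small-weight classification of Theorem~\ref{thm:small_weight}. The paper's own proof is the one-line version of exactly this; the subsequent Remark in the paper also notes, as you do, that the same conclusion follows from the trace-formula computations.

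One minor imprecision: in the final case $(d_0,g_0)=(1,0)$ you assert that every factorization of $\sum g_i d_i = 3$ contains a term with $g_i=d_i=1$. This is not literally true (e.g.\ $(g_1,d_1)=(3,1)$ or $(1,3)$), but the conclusion stands: since $3$ is odd, some $g_i d_i$ is odd, forcing both $g_i$ and $d_i$ odd, and an odd $d_i$ with odd $g_i$ violates the parity rule in Definition~\ref{def:AL_param}(2). The same parity observation silently justifies your claim in the $(3,0)$ case that $r=1$ is forced.
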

\begin{proof}
	Using Theorem \ref{thm:small_weight} and Definition \ref{def:AL_param} we see
	that $\Psi_{\op{disc}}^{\op{unr}, \tau}(\Sp_6) = \emptyset$.
\end{proof}

\begin{rem}
	Of course this result also follows from \cite{Taibi_dimtrace}. More precisely,
	without the a priori knowledge given by Theorem \ref{thm:small_weight} we have
	for $\lambda = (1,1,0)$ that $\Psi_{\op{disc}}^{\op{unr}, \tau}(\Sp_6)$ is the
	disjoint union of $O_o(4,3,1)$ with the two sets
	\[ \left\{ \pi_1 \boxplus \pi_2 \,|\, \pi_1 \in O_o(w_1),\, \pi_2 \in
	O_e(w_1', w_2') \text{ with } \{ w_1, w_1', w_2' \} = \{ 4,3,1 \} \right\}, \]
	\[ \left\{ \pi_1 \boxplus \pi_2[2] \,|\, \pi_1 \in O_o(1),\, \pi_2 \in
	S(7/2) \right\}. \]
	By Remark \ref{rem:three_sets_smallg} and vanishing of $S_{2k}(\SL_2(\ZZ))$
	for $0 < k < 6$ both sets are empty, so Corollary \ref{cor:IHSat3V11} follows
	from the computation of $e_{(2)}(\ab[3], \VV_{\lambda}) = 0$. Note that
	computationally this result is easier than Proposition \ref{prop:eA4}, since
	computing masses for $\Sp_8$ is much more work than for $\Sp_6$.
\end{rem}

If we now focus on $\lambda = 0$, we have the following classification result.

\begin{cor} \label{cor:classification_Psi_smallg}
	For $1 \leq g \leq 5$ we have $\Psi_{\op{disc}}^{\op{unr}, \rho}(\Sp_{2g}) =
	\{ [2g+1] \}$. For $6 \leq g \leq 11$, all elements of
	$\Psi_{\op{disc}}^{\op{unr}, \rho}(\Sp_{2g}) \smallsetminus \{ [2g+1] \}$ are
	listed in the following tables.

	\[
	\begin{array}{r|l|}
		g & \Psi_{\op{disc}}^{\op{unr}, \rho}(\Sp_{2g}) \smallsetminus \{ [2g+1]
		\} \\
		\hline
		6 & \Delta_{11}[2] \boxplus [9] \\
		\hline
		7 & \Delta_{11}[4] \boxplus [7] \\
    \hline
		11 & \Delta_{21}[2] \boxplus [19] \\
		   & \Delta_{21}[2] \boxplus \Delta_{11}[8] \boxplus [3] \\
		   & \Delta_{21,5}[2] \boxplus \Delta_{17}[2] \boxplus \Delta_{11}[4]
		     \boxplus [3] \\
			 & \Delta_{21}[2] \boxplus \Delta_{17}[2] \boxplus \Delta_{11}[4]
				 \boxplus [7] \\
			 & \Delta_{19}[4] \boxplus \Delta_{11}[4] \boxplus [7] \\
		   & \Delta_{21,9}[2] \boxplus \Delta_{15}[4] \boxplus [7] \\
		   & \Delta_{15}[8] \boxplus [7] \\
		   & \Delta_{21}[2] \boxplus \Delta_{17}[2] \boxplus [15] \\
		   & \Delta_{19}[4] \boxplus [15] \\
		   & \Delta_{11}[10] \boxplus \Sym^2(\Delta_{11}) \\
		   & \Delta_{17}[6] \boxplus [11] \\
		   & \Delta_{21}[2] \boxplus \Delta_{15}[4] \boxplus [11] \\
		   & \Delta_{21,13}[2] \boxplus \Delta_{17}[2] \boxplus [11] \\
	\end{array}
	\quad
	\begin{array}{r|l|}
		g & \Psi_{\op{disc}}^{\op{unr}, \rho}(\Sp_{2g}) \smallsetminus \{ [2g+1]
		\} \\
		\hline
		8 & \Delta_{11}[6] \boxplus [5] \\
				& \Delta_{15}[2] \boxplus \Delta_{11}[2] \boxplus [9] \\
				& \Delta_{15}[2] \boxplus [13] \\
		\hline
		9 & \Delta_{11}[8] \boxplus [3] \\
				& \Delta_{17}[2] \boxplus \Delta_{11}[4] \boxplus [7] \\
				& \Delta_{17}[2] \boxplus [15] \\
				& \Delta_{15}[4] \boxplus [11] \\
		\hline
		10 & \Delta_{19,7}[2] \boxplus \Delta_{15}[2] \boxplus \Delta_{11}[2]
					\boxplus [5] \\
				 & \Delta_{19}[2] \boxplus \Delta_{11}[6] \boxplus [5] \\
				 & \Delta_{11}[10] \boxplus [1] \\
				 & \Delta_{19}[2] \boxplus [17] \\
				 & \Delta_{19}[2] \boxplus \Delta_{15}[2] \boxplus \Delta_{11}[2]
					\boxplus [9] \\
				 & \Delta_{15}[6] \boxplus [9] \\
				 & \Delta_{17}[4] \boxplus \Delta_{11}[2] \boxplus [9] \\
				 & \Delta_{19}[2] \boxplus \Delta_{15}[2] \boxplus [13] \\
				 & \Delta_{17}[4] \boxplus [13] \\
	\end{array}
	\]
\end{cor}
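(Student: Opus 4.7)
The plan is to enumerate all $\psi \in \Psi_{\op{disc}}^{\op{unr}, \rho}(\Sp_{2g})$ directly from Definition \ref{def:AL_param}, using Theorem \ref{thm:small_weight} to restrict the pool of available constituents.

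Setting $\lambda = 0$ gives $\tau = \rho$, with positive eigenvalues $\{g, g-1, \dots, 1\}$. Every weight parameter $w_k^{(i)}$ appearing in any constituent $\pi_i$ of any such $\psi$ must then satisfy $w_k^{(i)} \leq g \leq 11$. Theorem \ref{thm:small_weight} consequently provides an explicit finite list of building blocks: the trivial representation in $O_o()$; $\Sym^2(\Delta_{11}) \in O_o(11)$; the symplectic $\Delta_{2w} \in S(w)$ for half-integer $w \in \{11/2, 15/2, 17/2, 19/2, 21/2\}$; the genus-two Siegel constituents $\Delta_{2w_1, 2w_2} \in S(w_1, w_2)$ for $(2w_1, 2w_2) \in \{(19, 7), (21, 5), (21, 9), (21, 13)\}$; and, via Remark \ref{rem:three_sets_smallg}, elements of $O_e(a+b, a-b)$ arising from pairs $S(a) \times S(b)$ with half-integer $a > b$ drawn from the same set. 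No $O_e$ with more than two weight parameters can arise in this range, as otherwise Theorem \ref{thm:small_weight} would list it.

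Each candidate summand $\pi_i[d_i]$ contributes to $\psi$ the set of positive values $\{w_k^{(i)} + (d_i-1)/2 - j : 1 \leq k \leq g_i,\; 0 \leq j \leq d_i-1\}$ (with $[d_0]$ contributing $\{(d_0-1)/2, (d_0-3)/2, \dots, 1\}$), and these sets must partition $\{g, g-1, \dots, 1\}$. Together with the parity condition on $d_i$ (odd for $\pi_i$ orthogonal, even for $\pi_i$ symplectic), the distinctness of the pairs $(\pi_i, d_i)$, and the dimension identity $(2g_0+1)d_0 + 2\sum_{i=1}^r g_i d_i = 2g+1$, this reduces the problem to a finite combinatorial search. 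For $g \leq 5$ every non-trivial constituent has $w_1^{(i)} \geq 11/2$, so $\pi_i[d_i]$ would contribute the value $w_1^{(i)} + (d_i-1)/2 \geq 11/2 > g$ which cannot lie in $\{1, \dots, g\}$; hence $\psi = [2g+1]$ is the only possibility.

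For $6 \leq g \leq 11$ one enumerates by case analysis from the top weight down: $g$ must lie in one block, and the arithmetic progression structure of such a block, together with the requirement that it arise from a constituent on the list above, pins down the choice of $\pi_i$ and $d_i$; one then recurses on the complement of this block inside $\{g-1, \dots, 1\}$. The main practical obstacle is the rapidly growing number of cases as $g$ approaches $11$, but the search is entirely algorithmic and finite; carrying it out systematically (by hand for the smaller $g$ and with computer assistance for the larger $g$) produces precisely the tables displayed in the statement. Conversely, each tabulated $\psi$ is verified against Definition \ref{def:AL_param} by directly checking the partition, dimension, and parity conditions.
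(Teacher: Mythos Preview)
Your proposal is correct and matches the paper's approach: the corollary is presented in the paper as a direct consequence of combining Definition~\ref{def:AL_param} with the Chenevier--Lannes classification (Theorem~\ref{thm:small_weight}), exactly the finite enumeration you carry out. Your clean argument for $g\leq 5$ (that any non-trivial constituent would force a value $\geq 11/2$ into $\{1,\dots,g\}$) is a nice way to make the small-genus case transparent.
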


In the next section we will recall and make explicit the description by
Langlands and Arthur of $L^2$-cohomology in terms of $\psi_{\infty}$, which is
simpler than using \cite{VoZu} directly but imposes to work with the group
$\GSp_{2g}$ (which occurs in a Shimura datum) instead of $\Sp_{2g}$.

Let us work out the simple case of $\psi = [2g+1] \in
\Psi_{\op{disc}}^{\op{unr}, \rho}(\Sp_{2g})$ directly using \cite[Proposition
6.19]{VoZu}, using their notation, in particular $\mathfrak{g} = \mathfrak{k}
\oplus \mathfrak{p}$ (complexification of the Cartan decomposition of
$\mathfrak{g}_0 = \op{Lie}(\Sp_{2g}(\RR))$ and $\mathfrak{p} = \mathfrak{p}^+
\oplus \mathfrak{p}^-$ (decomposition of $\mathfrak{p}$ according to the action
of the center of $K$, which is isomorphic to $U(1)$). In this case
$\Pi(\psi_{\infty}) = \{ 1 \}$, $\mathfrak{u} = 0$ and $\mathfrak{l} =
\mathfrak{sp}_{2g}$, so
\[ H^{2k}((\mathfrak{g}, K), 1) = H^{k,k}((\mathfrak{g}, K), 1) \simeq \Hom_K
	\left(\bigwedge^{2k} \mathfrak{p}, \CC \right)
\]
and the dimension of this space is the number of constituents in the
multiplicity-free representation $\bigwedge^k( \mathfrak{p}^+)$. One can show
(as for any Hermitian symmetric space) that this number equals the number of
elements of length $p$ in $W(\Sp_{2g}) / W(K) \simeq W(\Sp_{2g}) / W(\GL_g)$.
A simple explicit computation that we omit shows that this number equals the
number of partitions of $k$ as a sum of distinct integers between $1$ and $n$.
In conclusion,
\[ \sum_i T^i \dim H^i( (\mathfrak{sp}_{2n}, K), 1) = \prod_{k=1}^n (1+T^{2k}).
\]

Combining the first part of Corollary \ref{cor:classification_Psi_smallg} and
this computation we obtain Theorem \ref{teo:Intersectionsmall}.
\begin{thm} \label{thm:IH_Sat_taut_smallg}
	For $1 \leq g \leq 5$ we have $IH^{\bullet}(\Sat, \QQ) \simeq R_g$ as graded
	vector spaces over $\QQ$.
\end{thm}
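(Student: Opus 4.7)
The plan is to derive the identification directly from Zucker's conjecture, the multiplicity formula of Theorem \ref{thm:mult_formula}, and the classification of Arthur-Langlands parameters recalled in the first part of Corollary \ref{cor:classification_Psi_smallg}, combined with the explicit $(\mathfrak{g},K)$-cohomology computation carried out immediately before the statement. Since both $IH^{\bullet}(\Sat,\QQ)$ and $R_g$ are graded $\QQ$-vector spaces of finite total dimension, it suffices — after passing to $\CC$ via Zucker's conjecture $IH^{\bullet}(\Sat,\QQ)\otimes\CC\cong H^{\bullet}_{(2)}(\ab)$ — to check that their Poincar\'e polynomials agree.

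First I would specialize Theorem \ref{thm:mult_formula} to $\lambda=0$, so $\tau=\rho$. By the first part of Corollary \ref{cor:classification_Psi_smallg}, in the range $1\le g\le 5$ the set $\Psi_{\op{disc}}^{\op{unr},\rho}(\Sp_{2g})$ consists of the single parameter $\psi=[2g+1]$. For this $\psi$ one has $r=0$ in Definition \ref{def:AL_param}, so the component group $\calS_\psi$ is trivial and the character condition $\langle\cdot,\pi_\infty\rangle=\epsilon_\psi$ in Theorem \ref{thm:mult_formula} is automatic. Moreover the Adams-Johnson packet $\Pi(\psi_\infty)$ reduces to the trivial representation of $\Sp_{2g}(\RR)$, and Theorem \ref{thm:mult_formula} collapses to
\[ H^{\bullet}_{(2)}(\ab)\;\cong\;H^{\bullet}((\mathfrak{sp}_{2g},K),\CC). \]

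Next I would invoke the computation performed in the paragraph preceding the statement, which uses \cite{VoZu} to give
\[ \sum_i T^i\dim H^i((\mathfrak{sp}_{2g},K),\CC)=\prod_{k=1}^{g}(1+T^{2k}). \]
On the other side, $R_g$ admits the explicit $\QQ$-basis $\{\prod_i u_i^{\varepsilon_i}:\varepsilon_i\in\{0,1\}\}$ of cardinality $2^g$, and since $\deg u_i=2i$ its Poincar\'e polynomial is also $\prod_{k=1}^g(1+T^{2k})$. Matching degree by degree yields the required isomorphism of graded $\QQ$-vector spaces.

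There is no genuinely hard step remaining, because all the deep input has already been imported: the classification in Corollary \ref{cor:classification_Psi_smallg} (which rests on either the trace formula evaluation or on the Chenevier-Lannes bound, Theorem \ref{thm:small_weight}) is what forces $\psi=[2g+1]$ to be the unique contributor for $g\le 5$; Arthur's multiplicity formula together with \cite{AMR} identifies the local packet; and \cite{VoZu} supplies the $(\mathfrak{g},K)$-cohomology of the trivial representation. The only point requiring a brief verification is that the trivial Adams-Johnson parameter indeed pairs trivially with $\calS_\psi$, which in this case is vacuous. The statement is thus a compact synthesis of these inputs, and the main conceptual obstacle was packaged into Corollary \ref{cor:classification_Psi_smallg} rather than into the proof of the theorem itself.
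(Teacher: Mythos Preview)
Your proposal is correct and follows essentially the same approach as the paper: the paper's proof is the single sentence ``Combining the first part of Corollary \ref{cor:classification_Psi_smallg} and this computation we obtain Theorem \ref{teo:Intersectionsmall}'', where ``this computation'' refers to the $(\mathfrak{g},K)$-cohomology calculation $\sum_i T^i\dim H^i((\mathfrak{sp}_{2g},K),1)=\prod_{k=1}^g(1+T^{2k})$ carried out in the preceding paragraph. You have simply unpacked this into its constituent steps (Zucker, the multiplicity formula, triviality of $\calS_\psi$ and of the packet for $\psi=[2g+1]$, and matching Poincar\'e polynomials with $R_g$), which is exactly what the paper intends.
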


We can sharpen Theorem \ref{thm:stable_IH} in the particular case of level one,
although what we obtain is not a stabilization result (see the remark after the
theorem).

\begin{thm} \label{thm:sharper_stab_IH_Sat_level_one}
	For $g \geq 2$, $\lambda$ a dominant weight for $\Sp_{2g}$ and $k < 2g-2$,
	\[ IH^k(\Sat, \VV_{\lambda}) =
		\begin{cases}
			R_g^k & \text{ if } \lambda = 0 \\
			0 & \text{ otherwise }
		\end{cases} \]
		where $R^k_g$ denotes the degree $k$ part of $R_g$, $u_i$ having degree
		$2i$.
\end{thm}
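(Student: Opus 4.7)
The plan is to combine Zucker's conjecture, Arthur's multiplicity formula (Theorem~\ref{thm:mult_formula}), and Vogan--Zuckerman's computation of $(\mathfrak{g}, K)$-cohomology of Adams--Johnson representations. By Zucker's conjecture $IH^k(\Sat, \VV_\lambda) \otimes_\QQ \CC \simeq H^k_{(2)}(\ab, \VV_\lambda)$, and Theorem~\ref{thm:mult_formula} decomposes the latter as
\[
	\bigoplus_{\psi \in \Psi_{\op{disc}}^{\op{unr},\tau}(\Sp_{2g})}
	\bigoplus_{\substack{\pi_\infty \in \Pi(\psi_\infty) \\ \langle \cdot, \pi_\infty \rangle = \epsilon_\psi}}
	H^k((\mathfrak{g}, K), \pi_\infty \otimes V_\lambda) \otimes \chi_f(\psi)
\]
with $\tau = \lambda + \rho$. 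I will show that in the range $k < 2g-2$ only the principal parameter $\psi = [2g+1]$ can contribute, and that it contributes precisely $R_g^k$.

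The principal parameter $\psi = [2g+1]$ lies in $\Psi_{\op{disc}}^{\op{unr},\tau}(\Sp_{2g})$ exactly when $\tau = \rho$, i.e.\ $\lambda = 0$; in that case $\Pi(\psi_\infty) = \{\mathbf{1}\}$ and the identity $\sum_i T^i \dim H^i((\mathfrak{sp}_{2g}, K), \mathbf{1}) = \prod_{k=1}^g (1 + T^{2k})$ from the paragraph preceding Theorem~\ref{thm:IH_Sat_taut_smallg} identifies the contribution with $R_g$ as a graded vector space. For every other $\psi \in \Psi_{\op{disc}}^{\op{unr},\tau}(\Sp_{2g})$, each $\pi_\infty \in \Pi(\psi_\infty)$ is an Adams--Johnson representation $A_{\mathfrak{q}}(\mu)$ cohomologically induced from a character of a \emph{proper} $\theta$-stable Levi subalgebra $\mathfrak{l} \subsetneq \mathfrak{g} = \mathfrak{sp}_{2g,\CC}$. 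Writing $\mathfrak{q} = \mathfrak{l} \oplus \mathfrak{u}$, the Vogan--Zuckerman computation \cite[Prop.~6.19]{VoZu} yields
\[
	H^k((\mathfrak{g}, K), A_{\mathfrak{q}}(\mu) \otimes V_\lambda) = 0 \quad \text{for } k < R_\mathfrak{q} := \dim_\CC(\mathfrak{u} \cap \mathfrak{p}).
\]
The theorem therefore reduces to the Key Estimate: $R_\mathfrak{q} \geq 2g - 2$ for every $\mathfrak{q}$ arising from a non-principal $\psi$.

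To establish the Key Estimate, the Adams--Johnson Levi has the form $L(\RR) = \prod_{i=1}^s U(p_i, q_i) \times \Sp_{2m}(\RR)$ (with possible additional $\GL_n(\CC)_\RR$ factors, handled similarly), where $\sum_i(p_i+q_i) + m = g$ and $s \geq 1$. Decomposing $\mathfrak{p}_\CC \simeq \Sym^2(\CC^g) \oplus \Sym^2((\CC^g)^*)$ as an $L \cap K$-module yields $\dim_\CC(\mathfrak{l} \cap \mathfrak{p}) = 2\sum_i p_i q_i + m(m+1)$; since any Adams--Johnson $\mathfrak{u}$ carries exactly half of the non-Levi part of $\mathfrak{p}$, one obtains
\[
	R_\mathfrak{q} = \tfrac{1}{2}\bigl(g(g+1) - 2\textstyle\sum_i p_i q_i - m(m+1)\bigr).
\]

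The hard part will be the combinatorial minimization. A priori, ``small'' Levis such as $U(1) \times \Sp_{2g-2}(\RR)$ would yield $R_\mathfrak{q} = g < 2g-2$ (for $g \geq 3$), so the crux of the Key Estimate is to exclude these using the Arthur-theoretic constraints built into $\Psi_{\op{disc}}^{\op{unr},\tau}(\Sp_{2g})$: a compact $U(1)$-factor in the Adams--Johnson Levi would correspond to a trivial-rank block in $\psi$ already accounted for in the odd-orthogonal block $\psi_0 = 1[d_0]$, and hence cannot occur separately once $\psi \neq [2g+1]$. The remaining admissible Levis then all satisfy $R_\mathfrak{q} \geq 2g - 2$, with equality attained for instance by $L(\RR) = U(1,1) \times \Sp_{2g-4}(\RR)$ (realized by parameters of type $\Delta_w[2] \boxplus [2g-3]$ for $g \geq 6$). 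Combining this bound with the identification of the contribution of $\psi = [2g+1]$ completes the proof.
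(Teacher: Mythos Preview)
Your overall strategy matches the paper's: reduce via Zucker's conjecture, Arthur's multiplicity formula, and Vogan--Zuckerman to showing $R_{\mathfrak{q}} = \dim(\mathfrak{u} \cap \mathfrak{p}) \geq 2g-2$ for every $\theta$-stable parabolic $\mathfrak{q}$ arising from a non-principal $\psi$. Your formula $R_\mathfrak{q} = \tfrac{1}{2}\bigl(g(g+1) - 2\sum_i p_iq_i - m(m+1)\bigr)$ agrees with the paper's (in their notation $m=c$ and $(p_i,q_i)=(a_j,b_j)$).

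The gap is in the Key Estimate. Your claim that compact $U(1)$-factors are excluded is incorrect: they certainly occur, e.g.\ for any \emph{tempered} $\psi = \pi_0 \in O_o(w_1,\dots,w_g)$ the packet $\Pi(\psi_\infty)$ consists of discrete series and the Levi is the compact torus $U(1)^g$. The correct constraint is on the rank $m$ of the symplectic factor. The paper identifies $m = (d_0-1)/2$ and observes that in level one, $\psi \neq [2g+1]$ forces $r \geq 1$ with every block $\psi_i$ ($i \geq 1$) satisfying $m_i \geq 4$ (since $O_e$ starts at $\GL_4$ and any $\pi[d]$ with $\pi \in S(\dots)$ has $d \geq 2$, $g_i \geq 1$). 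Hence $(2g_0+1)d_0 \leq 2g-3$ and $m \leq g-2$. This is the level-one input you are gesturing at, but not the statement you actually wrote; your phrasing in terms of ``a compact $U(1)$-factor corresponding to a trivial-rank block'' does not match how the Adams--Johnson Levi is built from $\psi_\infty$.

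Even granting $m \leq g-2$, you still assert ``the remaining admissible Levis then all satisfy $R_\mathfrak{q} \geq 2g-2$'' without proof. The paper completes the argument by bounding $\sum_j a_jb_j \leq (g-m)^2/4$, noting that $\tfrac{g(g+1)}{2} - \tfrac{m(m+1)}{2} - \tfrac{(g-m)^2}{4}$ is concave in $m$, and evaluating at the endpoints $m=0$ and $m=g-2$ to obtain $\geq 2g-2$ in both cases (with a small separate check at $g=3$). This short optimization is the missing final step in your sketch.
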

\begin{proof}
	For $\psi \in \Psi_{\op{disc}}^{\op{unr}, \tau}(\Sp_{2g})$ different from
	$[2g+1]$, one sees easily from the construction in \cite{AdJo} that the
	trivial representation of $\Sp_{2g}(\RR)$ does not belong to
	$\Pi(\psi_{\infty})$. So using Zucker's conjecture, Borel-Casselman and
	Arthur's multiplicity formula, we are left to show that for $\psi \in
	\Psi_{\op{disc}}^{\op{unr}, \tau}(\Sp_{2g}) \smallsetminus \{ [2g+1] \}$, for
	any $\pi_{\infty} \in \Pi(\psi_{\infty})$, $H^{\bullet}((\mathfrak{g}, K),
	\pi_{\infty} \otimes V_{\lambda})$ vanishes in degree less than $2g-2$. We can
	read this from \cite[Proposition 6.19]{VoZu}, and we use the notation from
	this paper. Let $\theta$ be the Cartan involution of $\Sp_{2g}(\RR)$
	corresponding to $K$, so that $\mathfrak{p} = \mathfrak{g}^{-\theta}$. The
	representation $\pi_{\infty}$ is constructed from a $\theta$-stable parabolic
	subalgebra $\mathfrak{q} = \mathfrak{l} \oplus \mathfrak{u}$ of
	$\mathfrak{g}$, where $\mathfrak{l}$ is also $\theta$-stable. We will show
	that $\dim \mathfrak{u} \cap \mathfrak{p} \geq 2g-2$. The (complex) Lie
	algebra $\mathfrak{l}$ is isomorphic to $\prod_j \mathfrak{gl}(a_j+b_j) \times
	\mathfrak{sp}_{2c}$ with $c + \sum_j a_j+b_j = g$. The action of the
	involution $\theta$ on the factor $\mathfrak{gl}(a_k+b_k)$ is such that the
	associated real Lie algebra is isomorphic to $\mathfrak{u}(a_k,b_k)$. Using
	notation of Definition \ref{def:AL_param}, the integer $c$ equals $(d_0-1)/2$.
	Since $\psi \neq [2g+1]$ we have $r \geq 1$ and this implies that $c \leq g-2$
	(this is particular to level one, in arbitrary level one would simply get
	$c<g$). We have
	\[ 2 \dim \mathfrak{u} \cap \mathfrak{p} + \dim \mathfrak{l} \cap \mathfrak{p}
	= \dim \mathfrak{p} = g(g+1) \]
	since $\mathfrak{l}$, $\mathfrak{u}$ and its opposite Lie algebra with respect
	to $\mathfrak{l}$ are all stable under $\theta$. We compute
	\[ \dim \mathfrak{l} \cap \mathfrak{k} = \sum_j (a_j^2+b_j^2) + c^2 \]
	and so
	\[ \dim \mathfrak{l} \cap \mathfrak{p} = \dim \mathfrak{l} - \dim \mathfrak{l}
	\cap \mathfrak{k} = 2 \sum_j a_jb_j + c(c+1) .\]
	We get
	\[ \dim \mathfrak{u} \cap \mathfrak{p} = \frac{g(g+1)}{2} - \frac{c(c+1)}{2} -
	\sum_j a_jb_j. \]
	We have
	\[ \sum_j a_jb_j \leq (\sum_j a_j)(\sum_j b_j) \leq \frac{(g-c)^2}{4} \]
	which implies
	\[ \dim \mathfrak{u} \cap \mathfrak{p} \geq \frac{g(g+1)}{2} -
	\frac{c(c+1)}{2} - \frac{(g-c)^2}{4}. \]
	The right hand side is a concave function of $c$, thus its maximal value for
	$c \in \{0, \dots, g-2\}$ is $g(g+1)/2 - \min(g^2/4, (g-2)(g-1)/2+1)$. For
	integral $g \neq 3$ one easily checks that this equals $g(g+1)/2 -
	(g-2)(g-1)/2 - 1 = 2g-2$. For $g=3$ we get $2g-2-1/4$, and $\lceil 2g-2-1/4
	\rceil = 2g-2$.
\end{proof}

\begin{rem}\label{rem:ihsatcomments}
	\begin{enumerate}
		\item For $k \geq g$ even the surjective map $\QQ[\lambda_1, \lambda_3,
			\dots]^k \rightarrow R_g^k$ has non-trivial kernel, so in the range $g
			\leq k < 2g-2$ we do \emph{not} get stabilization.
		\item One can show that for the trivial system of coefficients (i.e.\
			$\lambda=0$) the bound in Theorem \ref{thm:sharper_stab_IH_Sat_level_one}
			is sharp for even $g \geq 6$, is not sharp for odd $g$ (i.e.\
			$IH^{2g-2}(\Sat, \QQ) = R_g^{2g-2}$) but that for odd $g \geq 9$ we have
			$IH^{2g-1}(\Sat, \QQ) \neq 0$. For even $g \geq 6$ and odd $g \geq 9$ this
			is due to $\psi$ of the form $\pi[2] \boxplus [2g-3]$ where $\pi \in
			S(g-1/2)$ corresponds to a weight $2g$ eigenform for $\SL_2(\ZZ)$.
		\item Of course for non-trivial $\lambda$ one can improve on this result,
			e.g.\ for $\lambda_1 > \dots > \lambda_g > 0$ we have vanishing in degree
			$\neq g(g+1)/2$ (see \cite[Theorem 5]{saper1} and \cite[Theorem
			5.5]{LiSchwermer} for a vanishing result for ordinary cohomology). If we
			only assume $\lambda_g > 0$, this forces $c=0$ in the proof and we obtain
			vanishing in degree less than $(g^2+2g)/4$.
		\item An argument similar to the proof of Theorem
			\ref{thm:sharper_stab_IH_Sat_level_one} can be used to show the same
			result for $k<g$ in arbitrary level. It seems likely that one could extend
			the sharper bound in Theorem \ref{thm:sharper_stab_IH_Sat_level_one} to
			certain deeper levels, e.g.\ Iwahori level at a finite number of primes.
	\end{enumerate}
\end{rem}

There is also a striking consequence of \cite[Proposition 6.19]{VoZu} (and
\cite[Lemma 9.1]{Kottwitz_AnnArbor}) that was observed in \cite{MorelSuh},
namely the fact that any $\psi$ only contributes in degrees of a certain parity.
This implies the dimension part in the following proposition which is a natural
first step towards the complete description in the next section.

\begin{prop} \label{prop:decomp_IH}
	There is a canonical decomposition
	\[ \QQ^{\op{real}} \otimes_{\QQ} IH^{\bullet}(\Sat, \VV_{\lambda}) =
		\bigoplus_{\psi \in \Psi_{\op{disc}}^{\op{unr}, \tau}(\Sp_{2g})}
	\QQ^{\op{real}} \otimes_{E(\psi)} H_{\psi}^{\bullet} \]
	where $H_{\psi}^{\bullet}$ is a graded vector space of total dimension
	$2^{n-r}$ ($r$ as in Definition \ref{def:AL_param}) over the totally real
	number field $E(\psi)$, endowed with
	\begin{enumerate}
		\item for any $n \geq 0$, a pure Hodge structure of weight $n$ on
			$H_{\psi}^n$, inducing a bigrading $\CC \otimes_{E(\psi)} H_{\psi}^n =
			\bigoplus_{p+q=n} H_{\psi}^{p,q}$,
		\item a linear operator $L : \RR \otimes_{E(\psi)} H_{\psi}^{\bullet}
			\rightarrow \RR \otimes_{E(\psi)} H_{\psi}^{\bullet}$ mapping
			$H_{\psi}^{p,q}$ to $H_{\psi}^{p+1,q+1}$ and such that for any $0 < n \leq
			g(g+1)/2$,
			\[ L^n : \RR \otimes_{E(\psi)} H_{\psi}^{g(g+1)/2-n} \rightarrow \RR
			\otimes_{E(\psi)} H_{\psi}^{g(g+1)/2+n} \]
			is an isomorphism.
	\end{enumerate}
	This decomposition is $\calH_f^{\op{unr}}(\Sp_{2n})$-equivariant, the action
	on $H_{\psi}$ being by the character $\chi_f(\psi)$.
\end{prop}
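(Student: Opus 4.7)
The plan is to deduce the decomposition directly from Arthur's multiplicity formula (Theorem \ref{thm:mult_formula}) combined with Zucker's conjecture, and then to install the Hodge, Galois and Lefschetz structures on the resulting summands.

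First I would identify $\QQ\otimes IH^{\bullet}(\Sat,\VV_{\lambda})$ with $L^2$-cohomology via Zucker, and then, via \cite{BorelCasselman_L2} and Theorem \ref{thm:mult_formula}, with
\[
\bigoplus_{\psi\in\Psi_{\op{disc}}^{\op{unr},\tau}(\Sp_{2g})}\ \bigoplus_{\substack{\pi_{\infty}\in\Pi(\psi_{\infty})\\\langle\cdot,\pi_{\infty}\rangle=\epsilon_{\psi}}} H^{\bullet}((\mathfrak g,K),\pi_{\infty}\otimes V_{\lambda})\otimes\chi_f(\psi).
\]
The canonicity of the $\psi$-grading comes from the strong multiplicity one statement Theorem \ref{thm:mult_one}: the characters $\chi_f(\psi)$ of $\calH_f^{\op{unr}}(\Sp_{2g})$ are pairwise distinct, so the $\psi$-summand can be intrinsically defined as the generalized eigenspace for $\chi_f(\psi)$. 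I would then \emph{define} $H_\psi^{\bullet}$ to be the inner double sum above (which is naturally defined over $E(\psi)$, since $\chi_f(\psi)$ takes values in $E(\psi)$ and multiplicity one identifies the isotypic component as a free $E(\psi)$-module after extending scalars to $\QQ^{\op{real}}$). The Galois group $\op{Gal}(\QQ^{\op{real}}/\QQ)$ permutes the $\psi$'s compatibly with its action on $\chi_f(\psi)$, which implements the $\QQ$-rational structure on $\bigoplus_{[\psi]}\bigl(\QQ^{\op{real}}\otimes_{E(\psi)}H_{\psi}^{\bullet}\bigr)$ and thereby justifies writing the decomposition only after tensoring with $\QQ^{\op{real}}$.

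The dimension count $\dim_{E(\psi)}H_{\psi}^{\bullet}=2^{g-r}$ is the essential numerical input. By the parity observation of \cite{MorelSuh} (which follows from the computation in \cite[Prop.~6.19]{VoZu} combined with \cite[Lem.~9.1]{Kottwitz_AnnArbor}), each $\pi_{\infty}\in\Pi(\psi_{\infty})$ contributes to $(\mathfrak g,K)$-cohomology only in degrees of a single fixed parity, so the total dimension of $H_{\psi}^{\bullet}$ equals the absolute value of
\[
\sum_{\substack{\pi_{\infty}\in\Pi(\psi_{\infty})\\\langle\cdot,\pi_{\infty}\rangle|_{\calS_{\psi}}=\epsilon_{\psi}}} e\bigl((\mathfrak{sp}_{2g},K),\pi_{\infty}\otimes V_{\lambda}\bigr),
\]
and this equals $2^{g-r}$ by the sign-free version of \eqref{equ:simple_Euler}. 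The bigrading $H_{\psi}^{p,q}$ is then read off from the explicit Vogan--Zuckerman description: each cohomologically induced $\pi_{\infty}$ built from a $\theta$-stable parabolic $\mathfrak q=\mathfrak l\oplus\mathfrak u$ contributes Hodge classes determined by the splitting $\mathfrak u\cap\mathfrak p=(\mathfrak u\cap\mathfrak p^+)\oplus(\mathfrak u\cap\mathfrak p^-)$, and this bigrading is compatible with the Hodge structure carried by $IH^{\bullet}(\Sat,\VV_{\lambda})$ because the latter is induced by the Hodge filtration on $L^2$-forms, which respects the $(\mathfrak g,K)$-decomposition.

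Finally, for the Lefschetz operator $L$ I would take cup product with a Kähler class: since $\Sat$ is projective, the relative hard Lefschetz theorem for intersection cohomology (as a consequence of the Beilinson--Bernstein--Deligne--Gabber decomposition theorem, cf.\ \cite{deCataldoMigliorini}) provides an isomorphism $L^n:IH^{g(g+1)/2-n}(\Sat,\VV_{\lambda})\to IH^{g(g+1)/2+n}(\Sat,\VV_{\lambda})$ for $0<n\le g(g+1)/2$. Because $L$ commutes with the Hecke algebra action, it preserves the $\psi$-decomposition, and therefore restricts to an operator on each $\RR\otimes_{E(\psi)}H_{\psi}^{\bullet}$ satisfying the required hard Lefschetz property; compatibility with the bigrading (shift by $(1,1)$) is automatic from the Kähler identities. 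The main obstacle is the careful bookkeeping needed to verify that the $E(\psi)$-structure, Hecke action, Hodge bigrading and Lefschetz operator are all mutually compatible, but each ingredient is individually provided by results already cited (Theorems \ref{thm:mult_one} and \ref{thm:mult_formula}, \cite{VoZu}, and hard Lefschetz for $IH$).
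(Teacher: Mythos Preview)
Your overall strategy matches the paper's almost step for step: Zucker to pass to $L^2$-cohomology, Arthur's multiplicity formula (Theorem~\ref{thm:mult_formula}) for the decomposition, strong multiplicity one (Theorem~\ref{thm:mult_one}) for canonicity and the $E(\psi)$-structure, and \cite{MorelSuh} together with \eqref{equ:simple_Euler} for the dimension count.

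The one point where you diverge is the source of the Lefschetz operator. The paper takes $L$ to be cup product with the \emph{K\"ahler form on the open locally symmetric space} $\ab$, acting on $L^2$-cohomology; hard Lefschetz is then quoted as a known fact in $L^2$-cohomology and in $(\mathfrak g,K)$-cohomology, and the $(1,1)$-shift is immediate because the Hodge structure in the statement is also the $L^2$ one. You instead invoke the algebraic Lefschetz operator on $\Sat$ via BBDG, but then appeal to ``K\"ahler identities'' for the compatibility with the Hodge bigrading coming from $L^2$-forms. This mixes the two pictures, and the paper's remark immediately following the proof warns precisely that it is \emph{not} known whether the algebraic Lefschetz operator (first Chern class of an ample line bundle on $\Sat$) agrees, even up to a real scalar, with the $L^2$ K\"ahler operator, nor whether Saito's Hodge structure on $IH^{\bullet}$ agrees with the $L^2$ one. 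Either route works on its own---$L^2$ Hodge structure with $L^2$ K\"ahler class, or Saito's Hodge structure with the algebraic class---but your argument as written crosses between them at the last step. The simplest fix is to do exactly what the paper does: stay entirely on the $L^2$ side and cite hard Lefschetz for $L^2$-cohomology (or equivalently for the relevant $(\mathfrak g,K)$-modules, e.g.\ via \cite{Arthur_unip}).
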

\begin{proof}
	Recall that by Zucker's conjecture (\cite{LoijZucker}, \cite{saperstern},
	\cite{LooijengaRapoport}) we have a Hecke-equivariant isomorphism
	\[ IH^{\bullet}(\Sat, \VV_{\lambda}) \otimes_{\QQ} \RR \simeq
	H^{\bullet}_{(2)}(\ab, \VV_{\lambda} \otimes_{\QQ} \RR). \]
	By Theorem \ref{thm:mult_formula} there are graded vector spaces $H_{\psi}$
	that can be defined over $E_{\psi}$ such that the left hand side is isomorphic
	to the right hand side. By Theorem \ref{thm:mult_one} each summand on the
	right hand side can be cut out using Hecke operators, so the decomposition is
	canonical and the $E(\psi)$-structure on $H_{\psi}$ is canonical as well. We
	endow $\RR \otimes_{\QQ} IH^{\bullet}(\Sat, \VV_{\lambda}) \simeq
	H^{\bullet}_{(2)}(\ab, \RR \otimes_{\QQ} \VV_{\lambda})$ with the real Hodge
	structure given by Hodge theory on $L^2$-cohomology of the non-compact
	K\"ahler manifold $\ab$. There is a natural Lefschetz operator $L$ given by
	cup-product with the K\"ahler form. It commutes with Hecke operators and one
	can check that $L$ is $i$ times the operator $X$ defined on p.60 of
	\cite{Arthur_unip}. The hard Lefschetz property of $L$ is known both in
	$L^2$-cohomology and $(\mathfrak{g}, K)$-cohomology. It follows from
	\cite[Theorem 1.5]{MorelSuh} that any $\psi$ contributes in only one parity,
	so the claim about $\dim_{E(\psi)} H_{\psi}$ follows from
	\eqref{equ:simple_Euler}.
\end{proof}

If $o$ is any $\op{Gal}(\QQ^{\op{real}} / \QQ)$-orbit in
$\Psi_{\op{disc}}^{\op{unr}, \tau}(\Sp_{2g})$, $\bigoplus_{\psi \in o} H_{\psi}$
is naturally defined over $\QQ$ and endowed with an action of a quotient of
$\calH_f^{\op{unr}}(\Sp_{2g})$ which is a finite totally real field extension
$E(o)$ of $\QQ$, and elements of $o$ correspond bijectively to $\QQ$-embeddings
$E(o) \rightarrow \QQ^{\op{real}}$.

\begin{rem}
	There is also a rational Hodge structure defined on intersection cohomology
	groups thanks to Morihiko Saito's theory of mixed Hodge modules, but
	unfortunately it is not known whether the induced real Hodge structure
	coincides with the one defined using $L^2$ theory (see \cite[\S
	5]{HarrisZucker3}). Similarly, there is another natural Lefschetz operator
	acting on the cohomology $IH^{\bullet}(\Sat, \VV_{\lambda})$ (using the first
	Chern class of an ample line bundle on $\Sat$), and it does not seem obvious
	that it coincides (up to a real scalar) with the K\"ahler operator $L$ above,
	although it could perhaps be deduced from arguments as in \cite[\S
	16.6]{GoreskyPardon}.
\end{rem}

\subsection{Description in terms of Archimedean Arthur-Langlands parameters}
\label{sec:app_Shimura}

Langlands and Arthur (\cite{Arthur_unip}, \cite{Arthur_L2_exp}) gave a
conceptually simpler point of view on the Hodge structure with Lefschetz
operator on $L^2$-cohomology. This applies to Shimura varieties and so one would
have to work with the reductive group $\GSp_{2g}$ instead of $\Sp_{2g}$, since
only $\GSp_{2g}$ is part of a Shimura datum, $(\GSp_{2g}, \HH_g \sqcup
\overline{\HH_g})$. Very roughly, the idea of this description for a Shimura
datum $(G,X)$ is that for $K_1$ the stabilizer in $G(\RR)$ of a point in $X$,
representations of $G(\RR)$ in an Adams-Johnson packet are parametrized by
certain cosets in $W(G,T)/W(K_1,T)$ for a maximal torus $T$ of $G(\RR)$
contained in $K_1$, and $W(K_1,T)$ is also identified with the stabilizer of the
cocharacter $\mu : \GL_1(\CC) \rightarrow G(\CC)$ obtained from the Shimura
datum. This cocharacter can be seen as an extremal weight for an irreducible
algebraic representation $r_{\mu}$ of $\widehat{G}$ and $r_{\mu}$ is
\emph{minuscule}, i.e.\ its weights form a single orbit under the Weyl group of
$\widehat{G}$, which is identified with $W(G,T)$.

In the case of $\GSp_{2g}$ the Langlands dual group is $\widehat{\GSp_{2g}} =
\GSpin_{2g+1}$ and $r_{\mu}$ is a spin representation. Morphisms taking values
in a spin group cannot simply be described as self-dual linear representations.
For this reason we do not have substitutes for Arthur-Langlands parameters for
$\GSp_{2g}$ constructed using automorphic representations of general linear
groups (that is the analogue of \ref{def:AL_param} for $\GSp_{2g}$ and arbitrary
level), and no precise multiplicity formula yet. Bin Xu \cite{Xu} obtained a
multiplicity formula in many cases, but his work does not cover the case of
non-tempered Arthur-Langlands parameters that is typical when $\lambda=0$. For
example all parameters appearing in Corollary
\ref{cor:classification_Psi_smallg} are non-tempered. Fortunately in level one
it turns out that we can simply formulate the result in terms of $\Sp_{2g}$.
This is in part due to the fact that, letting $K_1 = \RR_{>0} \Sp_{2g}(\RR)
\subset \GSp_{2g}(\RR)$, the natural map
\begin{equation} \label{equ:isom_Sp_GSp}
	\ab = \Sp_{2g}(\QQ) \backslash \Sp_{2g}(\AQ) / K \Sp_{2g}(\widehat{\ZZ})
	\rightarrow \GSp_{2g}(\QQ) \backslash \GSp_{2g}(\AQ) / K_1
	\GSp_{2g}(\widehat{\ZZ})
\end{equation}
is an isomorphism. This is a special case of a more general principle in level
one, see \S 4.3 and Appendix B in \cite{ChenevierRenard} for a conceptual
explanation. Since the cohomology of the intermediate extension to $\Sat$ of
$\VV_{\lambda}$ vanishes when the weight $w(\lambda) := \lambda_1 + \dots +
\lambda_g$ is odd, we will be able to formulate the result using
$\Spin_{2g+1}(\CC)$ instead of $\GSpin_{2g+1}(\CC)$.

Let $g \geq 1$ and $\lambda$ a dominant weight for $\Sp_{2g}$, as usual let
$\tau = \lambda + \rho$. Consider
\[ \psi = \psi_0 \boxplus \dots \boxplus \psi_r = \pi_0[d_0] \boxplus \dots
	\boxplus \pi_r[d_r] \in \Psi_{\op{disc}}^{\op{unr}, \tau}(\Sp_{2g}) \]
as in Definition \ref{def:AL_param}. First we recall how to equip $\CC
\otimes_{E(\psi)} H_{\psi}^{\bullet}$ with a continuous semisimple linear action
$\rho_{\psi}$ of $\CC^{\times} \times \SL_2(\CC)$. This action will be trivial
on $\RR_{>0} \subset \CC^{\times}$ by construction.
\begin{enumerate}
	\item We let $z \in \CC^{\times}$ act on $H_{\psi}^{p,q}$ by multiplication by
		$(z/|z|)^{q-p}$.
	\item There is a unique algebraic action of $\SL_2(\CC)$ on
		$\CC \otimes_{E(\psi)} H_{\psi}^{\bullet}$ such that the action of
		$\begin{pmatrix} 0 & 1 \\ 0 & 0 \end{pmatrix} \in \mathfrak{sl}_2$ is given by
		the Lefschetz operator $L$ and the diagonal torus in $\SL_2(\CC)$ preserves
		the grading on $\CC \otimes_{E(\psi)} H_{\psi}^{\bullet}$. Explicitly, by
		hard Lefschetz we have that for $t \in \GL_1$, $\op{diag}(t, t^{-1}) \in
		\SL_2$ acts on $H_{\psi}^i$ by multiplication by $t^{g(g+1)/2-i}$. This
		algebraic action is defined over $\RR$, and if we knew that $L$ is rational
		it would even be defined over $E(\psi)$.
	\item These actions commute and we obtain
		\[ \rho_{\psi} : \CC^{\times} \times \SL_2(\CC) \longrightarrow \GL(\CC
		\otimes_{E(\psi)} H_{\psi}). \]
\end{enumerate}
The dimension $g(g+1)/2$ being fixed, we see that the isomorphism class of the
real Hodge structure with Lefschetz operator $(\RR \otimes_{E(\psi)} H_{\psi},
L)$ determines and is determined by the isomorphism class of $\rho_{\psi}$. In
fact they are both determined by the Hodge diamond of $\RR \otimes_{E(\psi)}
H_{\psi}^{\bullet}$.

To state the description of these isomorphism classes in terms of
Arthur-Langlands parameters we need a few more definitions. For $i \in \{0,
\dots, r\}$ let $m_i$ be the product of $d_i$ with the dimension of the standard
representation of $\widehat{G_{\pi_i}}$, so that $2g+1 = \sum_{i=0}^r m_i$. Let
$\calM_{\psi_0} = \SO_{m_0}(\CC)$. For $1 \leq i \leq r$ let $(\calM_{\psi_i},
\tau_{\psi_i})$ be a pair such that $\calM_{\psi_i} \simeq \SO_{m_i}(\CC)$ and
$\tau_{\psi_i}$ is a semisimple element in the Lie algebra of $\calM_{\psi_i}$
whose image via the standard representation has eigenvalues 
\[ \pm (w_1^{(i)} + \frac{d_i-1}{2} - j) , \dots, \pm ( w_{g_i}^{(i)} +
\frac{d_i-1}{2} -j) \text{ for } 0 \leq j \leq d_i-1. \]
As in Definition \ref{def:Lpsi} the point of this definition is that the group
of automorphisms of $(\calM_{\psi_i}, \tau_{\psi_i})$ is the adjoint group of
$\calM_{\psi_i}$, because $\tau_{\psi_i}$ is not invariant under the outer
automorphism of $\calM_{\psi_i}$. Note that $\calM_{\psi_i}$ is semisimple since
$m_i \neq 2$.

Let $\calM_{\psi} = \prod_{0 \leq i \leq r} \calM_{\psi_i}$. There is a natural
embedding $\iota_{\psi} : \calM_{\psi} \rightarrow \SO_{2g+1}(\CC)$. Up to
conjugation by $\calM_{\psi}$ there is a unique morphism $f_{\psi} :
\calL_{\psi} \times \SL_2(\CC) \rightarrow \calM_{\psi}$ such that
\begin{enumerate}
	\item $\iota_{\psi} \circ f_{\psi}$ is conjugated to $\dot{\psi}$, which
		implies that $f_{\psi}$ is an algebraic morphism,
	\item the differential of $f_{\psi}$ maps $( (\tau_{\pi_i})_{1 \leq i \leq r},
		\op{diag}(\frac{1}{2}, - \frac{1}{2}))$ to $\tau_{\psi} :=
		(\tau_{\psi_i})_{0 \leq i \leq r}$.
\end{enumerate}
We can conjugate $\dot{\psi}$ in $\SO_{2g+1}(\CC)$ so that $\iota_{\psi} \circ
f_{\psi} = \dot{\psi}$, so we assume this equality from now on. The centralizer
of $\iota_{\psi}$ in $\SO_{2g+1}(\CC)$ coincides with $\calS_{\psi}$. Let
\[ f_{\psi, \infty} = f_{\psi} \circ \left((\varphi_{\pi_{i, \infty}})_{0 \leq i
\leq r}, \op{Id}_{\SL_2(\CC)} \right) : W_{\RR} \times \SL_2(\CC) \rightarrow
\calM_{\psi}. \]
Condition (ii) above is equivalent to $\op{ic}(f_{\psi, \infty}) = \tau_{\psi}$.
We have $\psi_{\infty} = \iota_{\psi} \circ f_{\psi, \infty}$.
		
Let $\calM_{\psi, \op{sc}} = \prod_{i=0}^r \calM_{\psi_i, \op{sc}} \simeq
\prod_{i=0}^r \Spin_{m_i}(\CC)$ be the simply connected cover of $\calM_{\psi}$.
Let $\spin_{\psi_0}$ be the spin representation of $\calM_{\psi_0, \op{sc}}$, of
dimension $2^{(m_0-1)/2}$. Let $\calM_{\psi_i, \op{sc}}$ be the simply connected
cover of $\calM_{\psi_i}$. The group $\calM_{\psi_i, \op{sc}}$ has two half-spin
representations $\spin^{\pm}_{\psi_i}$, distinguished by the fact that the
largest eigenvalue of $\spin^+_{\psi_i}(\tau_{\psi_i})$ is greater than that of
$\spin^-_{\psi_i}(\tau_{\psi_i})$. They both have dimension $2^{m_i/2-1}$. Let
$\calL_{\psi, \op{sc}} = \prod_{i=0}^r (\widehat{G_{\pi_i}})_{\op{sc}}$ be the
simply connected cover of $\calL_{\psi}$, a product of spin and symplectic
groups. There is a unique algebraic lift $\widetilde{f_{\psi}} : \calL_{\psi,
\op{sc}} \times \SL_2(\CC) \rightarrow \calM_{\psi, \op{sc}}$ of $f_{\psi}$.
There is a unique algebraic lift $\widetilde{\iota_{\psi}} : \calM_{\psi,
\op{sc}} \rightarrow \Spin_{2g+1}(\CC)$ of $\iota_{\psi}$ and it has finite
kernel. The pullback of the spin representation of $\Spin_{2g+1}(\CC)$ via
$\widetilde{\iota_{\psi}}$ decomposes as
\begin{equation} \label{equ:decomp_spin}
	\spin_{\psi_0} \otimes \bigoplus_{(\epsilon_i)_i \in \{ \pm \}^r}
	\spin^{\epsilon_1}_{\psi_1} \otimes \dots \otimes \spin^{\epsilon_r}_{\psi_r}.
\end{equation}
Let us be more specific. It turns out that the preimage $\calS_{\psi, \op{sc}}
\simeq (\ZZ/2\ZZ)^{r+1}$ of $\calS_{\psi}$ in $\Spin_{2g+1}(\CC)$ commutes with
$\widetilde{\iota_{\psi}}(\calM_{\psi, \op{sc}})$. This is specific to conductor
one, i.e.\ level $\Sp_{2g}(\widehat{\ZZ})$. Thus the spin
representation of $\Spin_{2g+1}(\CC)$ restricts to a representation of
$\calM_{\psi, \op{sc}} \times \calS_{\psi, \op{sc}}$, and
\eqref{equ:decomp_spin} is realized by decomposing into isotypical components
for $\calS_{\psi, \op{sc}}$. More precisely, the non-trivial element of the
center of $\Spin_{2g+1}(\CC)$ is mapped to $-1$ in the spin representation, and
there is a natural basis $(s_i)_{1 \leq i \leq r}$ of $\calS_{\psi}$ over
$(\ZZ/2\ZZ)^r$ and lifts $(\tilde{s}_i)_{1 \leq i \leq r}$ in $\calS_{\psi,
\op{sc}}$ such that in each factor of \eqref{equ:decomp_spin} $\tilde{s}_i$ acts
by $\epsilon_i$.

For $0 \leq i \leq r$ there is a unique continuous lift
$\widetilde{\varphi_{\pi_{i, \infty}}} : \CC^{\times} \rightarrow
(\widehat{G_{\pi_i}})_{\op{sc}}$ of \emph{the restriction} $\varphi_{\pi_{i,
\infty}}|_{\CC^{\times}}$. One could lift morphisms from $W_{\RR}$ but the lift
would not be unique. Finally we can define
\[ \widetilde{f_{\psi, \infty}} = \widetilde{f_{\psi}} \circ
	\left((\widetilde{\varphi_{\pi_{i, \infty}}})_{0 \leq i \leq r},
	\op{Id}_{\SL_2(\CC)} \right) : \CC^{\times} \times \SL_2(\CC) \rightarrow
\calM_{\psi}. \]

\begin{thm} \label{thm:desc_Hpsi_AL}
	For $g \geq 1$, $\lambda$ a dominant weight for $\Sp_{2g}$ and
	\[ \psi = \psi_0 \boxplus \dots \boxplus \psi_r \in
	\Psi_{\op{disc}}^{\op{unr}, \tau}(\Sp_{2g}) \]
	we have an isomorphism of continuous semisimple representations of
	$\CC^{\times} \times \SL_2(\CC)$:
	\[ \rho_{\psi} \simeq \left( \spin_{\psi_0} \otimes \spin^{u_1}_{\psi_1}
	\otimes \dots \otimes \spin^{u_r}_{\psi_r} \right) \circ \widetilde{f_{\psi,
	\infty}} \]
	where $u_1, \dots, u_r \in \{ +, - \}$ can be determined explicitly (see
	\cite{Taibi_Ag} for details).
\end{thm}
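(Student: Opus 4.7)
The plan is to derive the theorem from the Langlands-Kottwitz-Arthur recipe identifying the Hodge structure and Lefschetz operator on the $\psi$-isotypic component of $L^2$-cohomology of a Shimura variety with the pullback of the minuscule representation $r_{\mu}$ of the dual group along the Archimedean Arthur-Langlands parameter. The three inputs are Arthur's multiplicity formula (Theorem \ref{thm:mult_formula}), the explicit description by Vogan and Zuckerman \cite{VoZu} of the $(\mathfrak{g},K)$-cohomology of Adams-Johnson representations together with its $U(1) \times \SL_2(\CC)$-action (the $U(1)$ coming from the center of $K$ and the $\SL_2(\CC)$ generated by the hard Lefschetz operator), and the identification by Arancibia-Moeglin-Renard \cite{AMR} of the Arthur packets at the real place with the Adams-Johnson packets.

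First, I would transfer the problem to the Shimura variety for $\GSp_{2g}$ via the isomorphism \eqref{equ:isom_Sp_GSp}, which holds in level one. The Shimura datum $(\GSp_{2g}, \HH_g \sqcup \overline{\HH_g})$ determines a minuscule cocharacter $\mu$ of $\GSp_{2g}$ whose associated representation of $\widehat{\GSp_{2g}} = \GSpin_{2g+1}$ is the spin representation of dimension $2^g$. The parameter $\psi \in \Psi_{\op{disc}}^{\op{unr},\tau}(\Sp_{2g})$ lifts essentially uniquely to a parameter for $\GSp_{2g}$, and its Archimedean component lifts to the morphism $\widetilde{\iota_{\psi}} \circ \widetilde{f_{\psi,\infty}} : \CC^{\times} \times \SL_2(\CC) \to \Spin_{2g+1}(\CC)$ already constructed above.

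Next, I would combine Arthur's multiplicity formula with the Langlands-Kottwitz description: on the $\psi$-isotypic component, the $\CC^{\times} \times \SL_2(\CC)$-representation $\rho_{\psi}$ is isomorphic to the isotypic component of $\spin \circ \widetilde{\iota_{\psi}} \circ \widetilde{f_{\psi,\infty}}$ corresponding to a specific character of $\calS_{\psi,\op{sc}}$. This uses the fact that $\calS_{\psi,\op{sc}}$ commutes with the image of $\widetilde{\iota_{\psi}}$ (a phenomenon specific to level one) to make sense of the decomposition \eqref{equ:decomp_spin} into $\calS_{\psi,\op{sc}}$-isotypic pieces. The relevant character is determined by Arthur's sign character $\epsilon_{\psi}$ on $\calS_{\psi}$; matching with the basis $(\tilde{s}_i)$ selects the tuple $(u_1, \dots, u_r) \in \{\pm\}^r$ appearing in the statement and produces the claimed formula.

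The principal obstacle is the bookkeeping of signs and normalizations. First, since $\GSp_{2g}$ does not itself appear in the endoscopic classification of \cite{Arthur_book}, the multiplicity formula for $\GSp_{2g}$ in level one must be deduced from that for $\Sp_{2g}$ via \eqref{equ:isom_Sp_GSp}, and one must carefully track how the central $U(1)$ of the stabilizer $K_1 \subset \GSp_{2g}(\RR)$ induces the Hodge bigrading on $(\mathfrak{g},K)$-cohomology through Vogan-Zuckerman's formula. Second, the explicit identification of $(u_1, \dots, u_r)$ requires comparing the conventions of \cite{AdJo} and \cite{AMR} parametrizing $\Pi(\psi_{\infty})$ by characters of $\calS_{\psi_{\infty}}$, the symplectic root numbers defining $\epsilon_{\psi}$, and the convention distinguishing $\spin^{+}_{\psi_i}$ from $\spin^{-}_{\psi_i}$ via the largest eigenvalue of $\spin^{\pm}_{\psi_i}(\tau_{\psi_i})$. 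These delicate matchings are the substance of \cite{Taibi_Ag}.
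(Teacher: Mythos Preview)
Your proposal is correct and follows essentially the same route as the paper: both derive the result from Arthur's description in \cite[\S 9]{Arthur_unip} (what you call the Langlands--Kottwitz--Arthur recipe) combined with the multiplicity formula for $\Sp_{2g}$, and both isolate the same two level-one phenomena that make the argument go through without a multiplicity formula for $\GSp_{2g}$, namely that $K_1 = \RR_{>0} \times K$ so that $(\mathfrak{gsp}_{2g},K_1)$-cohomology reduces to $(\mathfrak{sp}_{2g},K)$-cohomology, and that $\calS_{\psi,\op{sc}}$ commutes with $\widetilde{\iota_{\psi}}(\calM_{\psi,\op{sc}})$ so that the decomposition \eqref{equ:decomp_spin} into $\calS_{\psi,\op{sc}}$-isotypic pieces is well-defined. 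The paper's proof is terser, simply citing \cite[Proposition 9.1]{Arthur_unip} directly rather than unpacking its ingredients (Vogan--Zuckerman, Adams--Johnson, \cite{AMR}) as you do, but the substance is the same.
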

\begin{proof}
	This is essentially a consequence of \cite[Proposition 9.1]{Arthur_unip} and
	Arthur's multiplicity formula, but we need to argue that in level one the
	argument goes through with the multiplicity formula for $\Sp_{2g}$ (Theorem
	\ref{thm:mult_formula}) instead. This is due to two simple facts. Firstly, the
	group $K_1$ in \cite[\S 9]{Arthur_unip} is simply $\RR_{>0} \times K$, and
	this implies that for $\pi_{\infty}$ a unitary irreducible representation of
	$\GSp_{2g}(\RR) / \RR_{>0}$,
	\[ H^{\bullet}((\mathfrak{gsp}_{2g}, K_1), \pi_{\infty} \otimes V_{\lambda}) =
		H^{\bullet}((\mathfrak{sp}_{2g}, K), \pi_{\infty}|_{\Sp_{2g}(\RR)}
	\otimes V_{\lambda} ) \]
	where on the left hand side $V_{\lambda}$ is seen as an algebraic
	representation of $\op{PGSp}_{2g}$ (since we can assume that $w(\lambda)$
	even). Secondly, as we observed above the preimage $\calS_{\psi, \op{sc}}$ of
	$\calS_{\psi}$ in $\Spin_{2g+1}(\CC)$ still commutes with
	$\widetilde{\iota_{\psi}}(\calM_{\psi, \op{sc}})$, making the representation
	$\sigma_{\psi}$ of \cite[\S 9]{Arthur_unip} well-defined. This second fact is
	particular to the level one case.
\end{proof}

To conclude, if we know $\Psi_{\op{disc}}^{\op{unr}, \tau}(\Sp_{2g})$, making
the decomposition in Proposition \ref{prop:decomp_IH} completely explicit boils
down to computing signs $(u_i)_{1 \leq i \leq r}$ and branching in the following
cases:
\begin{enumerate}
	\item for the morphism $\Spin_{2a+1} \times \SL_2 \rightarrow
		\Spin_{(2a+1)(2b+1)}$ lifting the representation
		\[ \op{Std_{\SO_{2a+1}}} \otimes \Sym^{2b}(\op{Std}_{\SL_2}) : \SO_{2a+1}
		\times \SL_2 \rightarrow \SO_{(2a+1)(2b+1)} \]
		and the spin representation of $\Spin_{(2a+1)(2b+1)}$,
	\item for $\Spin_{4a} \times \SL_2 \rightarrow \Spin_{4a(2b+1)}$ and both
		half-spin representations,
	\item for $\Sp_{2a} \times \SL_2 \rightarrow \Spin_{4ab}$ and both half-spin
		representations.
\end{enumerate}

For example one can using Corollary \ref{cor:classification_Psi_smallg},
Proposition \ref{prop:decomp_IH} and Theorem \ref{thm:desc_Hpsi_AL} one can
explicitly compute $IH^{\bullet}(\ab)$ for all $g \leq 11$.

\begin{exa} \label{exa:IH_as_spin}
	\begin{enumerate}
		\item For any $g \geq 1$ and $\psi = [2g+1]$, the group $\calL_{\psi}$ is
			trivial and up to a shift we recover the graded vector space $R_g$ as the
			composition of the spin representation of $\Spin_{2g+1}$ composed with the
			principal morphism $\SL_2 \rightarrow \Spin_{2g+1}$, graded by weights of
			a maximal torus of $\SL_2$.
		\item Consider $g = 6$ and $\psi = \Delta_{11}[2] \boxplus [9]$. For
			$\psi_0$ we have $\spin_{\psi_0} \circ \widetilde{f_{\psi_0}} \simeq
			\nu_{11} \oplus \nu_5$, where as before $\nu_d$ denotes the irreducible
			$d$-dimensional representation of $\SL_2$. For $\psi_1 = \Delta_{11}[2]$
			we have $\calL_{\psi_1} = \Sp_2(\CC)$, $\calM_{\psi_1} = \SO_4(\CC)$,
			$\spin^+_{\psi_1} \circ \widetilde{f_{\psi_1}} \simeq \op{Std}_{\Sp_2}
			\otimes 1_{\SL_2}$ and $\spin^-_{\psi_1} \circ \widetilde{f_{\psi_1}}
			\simeq 1_{\Sp_2} \otimes \nu_2$. It turns out that $u_1 = -$, so
			$\rho_{\psi}|_{\CC^{\times}}$ is trivial and
			\[ \rho_{\psi}|_{\SL_2} \simeq (\nu_{11} \oplus \nu_5 ) \otimes \nu_2
			\simeq \nu_{12} \oplus \nu_{10} \oplus \nu_6 \oplus \nu_4. \]
			Thus $H_{\psi}^{\bullet}$ has primitive cohomology classes in degrees
			$10,12,16,18$ (a factor $\nu_d$ contributes a primitive cohomology class
			in degree $g(g+1)/2-d+1$). Surprisingly, these classes are all Hodge,
			i.e.\ they belong to $H_{\psi}^{2k} \cap H_{\psi}^{k,k}$, despite the fact
			that the parameter $\psi$ is explained by a non-trivial motive over $\QQ$
			(attached to $\Delta_{11}$).
		\item Consider $g = 7$ and $\psi = \Delta_{11}[4] \boxplus [7]$. Again
			$\calL_{\psi, \op{sc}} \simeq \Sp_2(\CC)$. For $\psi_0 = [7]$ we have
			$\spin_{\psi_0} \circ \widetilde{f_{\psi_0}} \simeq \nu_7 \oplus 1$. For
			$\psi_1 = \Delta_{11}[4]$ we have $\calL_{\psi_1} = \Sp_2(\CC)$,
			$\calM_{\psi_1} = \SO_8(\CC)$, $\spin^+ \simeq \Sym^2(\op{Std}_{\Sp_2})
			\oplus \nu_5$ and $\spin^- \simeq \op{Std}_{\Sp_2} \otimes \nu_4$. Here
			$u_1 = +$, and we conclude
			\begin{align*}
				\rho_{\psi} & \simeq (\nu_7 \oplus 1) \otimes (\Sym^2(\op{Std}_{\Sp_2})
				\oplus \nu_5) \\
				& \simeq \Sym^2(\op{Std}_{\Sp_2}) \otimes (\nu_7 \oplus 1) \oplus
				\nu_{11} \oplus \nu_9 \oplus \nu_7 \oplus \nu_5^{\oplus 2} \oplus \nu_3.
			\end{align*}
			In this example, as in general, we would love to know that the above
			formula is valid for the \emph{rational} Hodge structure
			$H_{\psi}^{\bullet}$, replacing $\Sym^2(\op{Std}_{\Sp_2})$ by
			$\Sym^2(M)(11)$ where $M$ is the motivic Hodge structure associated to
			$\Delta_{11}$. In \cite{Taibi_Ag} this (and generalizations) is proved at
			the level of $\ell$-adic Galois representations.
	\end{enumerate}
\end{exa}

Forgetting the Hodge structure, the graded vector space $H_{\psi}^{\bullet}$ is
completely described by the restriction of $\rho_{\psi}$ to $\SL_2(\CC)$. The
Laurent polynomial $T^{-g(g+1)/2} \sum_{k=0}^{g(g+1)} T^k \dim H_{\psi}^k$ can
easily be computed by taking the product over $0 \leq i \leq r$ of the following
Laurent polynomials (with choice of signs as in Theorem \ref{thm:desc_Hpsi_AL}).
Denote $x = (1, \op{diag}(T,T^{-1})) \in \CC^{\times} \times \SL_2(\CC)$.

\begin{enumerate}
	\item For $\psi_0 = \pi_0[2d+1]$ with $\pi_0 \in O_o(w_1, \dots, w_m)$, we
		have
		\[ \left( \spin_{\psi_0} \circ \widetilde{f_{\psi, \infty}} \right)(x) = 2^m
		\prod_{j=1}^d (T^{-j} + T^j)^{2m+1}. \]
		\item For $\psi_i = \pi_i[2d+1]$ with $\pi_i \in O_e(w_1, \dots, w_{2m})$ we
			have
			\[ \left( \spin^{\pm}_{\psi_i} \circ \widetilde{f_{\psi, \infty}}
				\right)(x) = 2^{2m-1} \prod_{j=1}^d (T^{-j} + T^j)^{4m}. \]
		\item For $\psi_i = \pi_i[2d]$ with $\pi_i \in S(w_1, \dots, w_m)$ we have
			\begin{multline*}
				\left( \spin^{\pm}_{\psi_i} \circ \widetilde{f_{\psi, \infty}}
				\right)(x) = \\
				\frac{1}{2} \left( \prod_{j=1}^d (2+T^{2j-1}+T^{1-2j})^m \pm
				\prod_{j=1}^d (2-T^{2j-1}-T^{1-2j})^m \right).
			\end{multline*}
\end{enumerate}

\bibliographystyle{plain}
\bibliography{biblio-abelian}

\end{document}